\documentclass[draft=false, leqno]{article}
\usepackage{boilerart}
\usepackage{booktabs}
\usepackage{tensor}

\usetikzlibrary{backgrounds,decorations.markings,hobby,fit}

\defbibheading{references}[\refname]{%
	\section*{#1}%
	\addcontentsline{toc}{section}{References} %
	\markboth{#1}{#1}
	}

\DeclareMathOperator{\Arr}{\categ{Arr}}
\DeclareMathOperator{\biCat}{\categ{biCat}}
\DeclareMathOperator{\Bun}{Bun}
\DeclareMathOperator{\Div}{Div}
\DeclareMathOperator{\Env}{\categ{Env}}

\DeclareMathOperator{\Grass}{Grass}

\DeclareMathOperator{\Haus}{\categ{Haus}}
\DeclareMathOperator{\Hilb}{Hilb}
\DeclareMathOperator{\Hecke}{Hecke}
\DeclareMathOperator{\Isol}{\categ{Isol}}

\DeclareMathOperator{\Spa}{Spa}
\DeclareMathOperator{\TwArr}{\categ{TwArr}}

\DeclareMathOperator*{\bigleftsum}{\underline{\bigoplus}}
\DeclareMathOperator*{\bigrightsum}{\overline{\bigoplus}}

\newcommand{\accr}{\textit{accr}}
\newcommand{\disp}{\textit{disp}}

\newcommand{\hop}{\textit{hop}}
\newcommand{\lax}{\textit{lax}}
\newcommand{\leftboxtensor}{\mathbin{\underline{\boxtimes}}}

\newcommand{\leftsum}{\mathbin{\underline{\oplus}}}
\newcommand{\lefttensor}{\mathbin{\underline{\otimes}}}
\newcommand{\MAP}{\underline{\Map}}
\newcommand{\ol}[1]{\overline{#1}}

\newcommand{\ordrightsum}{\mathbin{\overrightarrow{\oplus}}}
\newcommand{\PPi}{\mathbfit{\Pi}}
\newcommand{\para}{\textit{para}}
\newcommand{\partto}{\rightharpoonup}
\newcommand{\paw}{\ensuremath{\textup{Paw}}}

\newcommand{\rightboxtensor}{\mathbin{\overline{\boxtimes}}}

\newcommand{\rightsum}{\mathbin{\overline{\oplus}}}
\newcommand{\righttensor}{\mathbin{\overline{\otimes}}}

\newcommand{\trip}{\textit{trip}}
\newcommand{\triv}{\textit{triv}}
\newcommand{\ul}[1]{\underline{#1}}
\newcommand{\vop}{\textit{vop}}

\title{Factorization algebras in quite a lot of generality}

\author{Clark Barwick}

\date{Version 2 -- February 2026}

\begin{document}

\maketitle

This is a first stab at a mathematical framework in which one can study quantum field theories on spacetimes with quite general geometries.
We will study these theories via their \emph{factorization algebras}.

Factorization algebras capture structures one finds on the spaces of observables of a quantum field theory.
Roughly speaking, if $M$ is a spacetime manifold, then a factorization algebra $F$ on $M$ assigns to any open subset $U \subseteq M$ the space $F(U)$ of measurements that one can perform within $U$.
Given \emph{disjoint} opens $U$ and $V$ and an inclusion $U \sqcup V \subseteq W$, these measurements can be combined, giving rise to a map $F(U) \otimes F(V) \to F(W) $.
Assembling these operations coherently yields the definition of a factorization algebra in the \emph{manifold context}, as studied by Costello--Gwilliam \cite{MR3586504,MR4300181}.

In the \emph{holomorphic context}, factorization algebras capture the formal properties of operator product expansions in holomorphic conformal field theory.
In this story, the spacetime manifold $M$ is replaced by (say) a Riemann surface $X$.
A factorization algebra is then a family of \emph{sheaves} (of some appropriate kind -- often $D$-modules) $A_n$ on the various powers $X^n$, along with data to ensure that if $x_1, \dots, x_n \in X$ are \emph{distinct} points, then one has local identifications
\[
  (A_n)_{(x_1, \dots, x_n)} = (A_1)_{x_1} \otimes \dots \otimes (A_1)_{x_n} \period
\]
The stalks of $A_1$ are the spaces of local operators of the conformal field theory.
The result is the theory of factorization algebras in the sense of Beilinson--Drinfeld (\cite{MR2058353}).

In this formulation, factorization algebras have become a central structure in modern geometric representation theory,
most notably in the monumental conceptual edifice formed around the geometric Langlands conjecture in the two decades since the publication of Beilinson--Drinfeld's book.
See, \eg, the work of 
Francis--Gaitsgory \cite{MR2891861},
Gaitsgory \cite{MR2403306,MR4176538,MR4117995},
Gaitsgory--Lysenko \cite{MR3769731},
Hennion--Kapranov \cite{MR4536901},
Ho \cite{MR3705928,MR3703458},
Henriques \cite{MR3821755},
Raskin \cite{MR3251357,MR4283758}.

The relationship between the manifold context and the holomorphic context appears to be a form of duality.
Costello--Gwilliam suggest \cite{MR3586504} that the costalks of a factorization algebra in the topological sense should agree with the stalks of an attached factorization algebra in the geometric sense.
They prove such a statement over $\AA^1_{\CC}$ by comparison with vertex operator algebras.
In general, it doesn't seem clear \emph{where} such a comparison can actually be made.

Among the factorization algebras on a manifold $M$ are the \emph{locally constant} factorization algebras, which capture the spaces of observables of a topological quantum field theory.
Locally constant factorization algebras on $M$ can equivalently be described as algebras over a suitable operad of embedded disks.
For example, locally constant factorization algebras on $\RR^n$ are $E_n$-algebras.
As this example makes clear, locally constant factorization algebras depend on more than the homotopy type of $M$ (unlike locally constant \emph{sheaves}).
Nonetheless, we would like to think of this as the \emph{homotopical context} for factorization algebras.

To make sense of factorization algebras in these contexts, one typically finds oneself using special features of the geometric objects with which one is working, such as the fact that every point of a manifold is contained in an embedded open disk,
or else one forms large objects like the Ran space, whose geometry is brittle and technical.
However, Raskin identified \cite{MR3251357} -- and Hennion--Kapranov revisited \cite{MR4536901}-- a formalism for factorization structures that largely avoids these particularities and technicalities. 
This formalism is more naturally encoded in the language of factorization algebras in a sheaf-theoretic setup similar to the one in algebro-geometric contexts.

It is tempting to see how far we can push this approach --
to identify a minimalist formalism that makes sense of factorization algebras in any geometric context.
The goal is not generality for its own sake, but to lay bare the skeleton of the theory.

On one hand, this places the theories of factorization algebras in (at least) the holomorphic and homotopical contexts on an equal footing, allowing for meaningful comparisons.
On the other hand, the formalism extends the technology of factorization algebras to many new contexts.
Of particular interest are factorization algebras in various arithmetic contexts.
In particular, the observables of \emph{arithmetic quantum field theories} --
whose existence has been predicted by Kim \cite{MR4166930} and Ben-Zvi--Sakellaridis--Venkatesh \cite{BZSV} --
should form factorization algebras in the context of Clausen--Scholze's analytic geometry.

The purpose of this paper is to give in to this temptation.

\subsection*{Synopsis}
\label{sub:synopsis}

Our first contention: in order to make sense of an algebra of observables on some geometric object $X$,
we must appeal to an additional piece of structure on $X$ -- an \emph{isolability structure} -- which is often left implicit.
An isolability structure on $X$ is the data required to say whether two points of $X$ are \emph{distant} --
or at any rate can be made distant enough to combine measurements performed at those points. 

The need for this structure reflects the principle of \emph{locality} in quantum field theory.
In effect, this principle states that the outcome of an experiment should not depend upon the outcomes of distant experiments.
Haag underscored the importance of this principle in the 1950s, and 
it plays a key role in the Haag--Kastler axiomatics for algebras of observables \cite{MR165864}.
A slightly more refined version of the principle -- the \emph{cluster decomposition principle} -- states that the scattering matrix that describes a pair of distant processes factors into the product of the scattering matrices for each of the individual processes \cite[Ch. 4]{MR2148466}.
Weinberg argued \cite{MR1699412} that this principle, Lorentz invariance, and quantum mechanics together entail quantum field theory.

The question of whether two points of spacetime are distant is generally not a \enquote{yes or no} question, and
it does not presuppose an existing notion of distance on $X$, such as a metric.
Rather, an isolability structure on $X$ provides a \emph{moduli space} of ways for pairs of points to be \emph{isolated}.
We require this moduli space to be a geometric object of the same type as $X$. 
In other words, in order to specify an isolability structure on $X$, one needs to specify, for every pair of $T$-points $x,y \colon T \to X$, a higher groupoid (or \emph{space}) $\enquine{x \nsim y}$.
The space $\enquine{x \nsim y}$ is the space of ways of isolating $x$ and $y$ from each other without altering any observables in any possible quantum field theories.
These spaces coalesce into a single object $X^{1 \oplus 1}$ with a map $X^{1 \oplus 1} \to X \times X$.
On $T$-points, the fiber of $X^{1 \oplus 1}(T) \to X(T) \times X(T)$ over $(x,y)$ is the space $\enquine{x \nsim y}$.
Thus $X^{1 \oplus 1}$ is a configuration space of pairs of isolated points in $X$.

The theory of isolability objects is complementary to the theory of stacks.
Between two $T$-points $x,y$ of a stack $Y$ one has the path-space space $\enquine{x = y}$, which is the space of ways for $x$ and $y$ to be the \enquote{same} --
or at any rate the space of routes by which we may connect $x$ and $y$.
If $Y_1$ is the path space of $Y$, then the space $\enquine{x = y}$ is the fiber of the endpoint map $Y_1(T) \to Y(T) \times Y(T)$ over $(x,y)$.
A stack is in effect a diagram $Y_{\bullet}$ of higher path spaces and maps between them, indexed by a combinatorial category such as Kan's simplex category $\Delta$.
Analogously, we will define an isolability object $X^{\bullet}$ as a diagram of configuration spaces $X^{\lambda}$ and maps between them.
For instance, the structure of an isolability object will include $X^{2 \oplus 3 \oplus 2}$, which is the space of septuples $(x_1, \dots, x_7)$ of points of $X$ organized in three \enquote{clusters} $\{x_1,x_2\}$, $\{x_3,x_4,x_5\}$, and $\{x_6,x_7\}$.
This diagram is indexed by a certain category $\DD$ of graphs called \emph{cographs},
which we study in detail in \cref{sec:combinatorics}.
The formal properties of this category of cographs and its sibling $\EE$ turn out to be central to our narrative --
on one hand to study isolability structures in \cref{sec:isolability}, and
on the other hand to study \emph{twofold symmetric monoidal structures}, which we will discuss later in this introduction and in detail in \cref{sec:twofold}.

Isolability structures have in many examples been left implicit because one already has access to a set-theoretic description of $X$,
with respect to which one can meaningfully ask whether two points of $X$ are \emph{literally distinct}.
For example, if $X$ is a manifold, then we typically let $X^{1 \oplus 1}$ be the complement of the diagonal in $X \times X$.

In contrast, suppose we want to consider \emph{locally constant} factorization algebras on $X$ --
the observables for topological quantum field theories.
Since locally constant sheaves depend only on the homotopy type $\Pi(X)$, we might try to start there, but
we have already seen that we cannot hope to define locally constant factorization algebras on $X$ in a way that only makes reference to $\Pi(X)$.
So what additional structure is needed?
Once we include the information of the various (stratified) homotopy types $\Pi(X^{\lambda})$ and the maps between them,
we \emph{can} recover locally constant factorization algebras on $X$.
This is precisely the data of an \emph{isolability space}, which we regard as an enhancement of the homotopy type of $X$.
Isolability spaces are the basic objects in the \enquote{homotopical context}, which we explore in \cref{sec:homotopical}.

For example, in the isolability space attached to $\RR^n$, we have a point $x$, which is unique up to a contractible choice, but the space $\enquine{x \nsim x}$ is an $(n-1)$-sphere.
This is notationally oxymoronic, but
here it's helpful to think of $\enquine{x \nsim x}$ as the space of ways to move two copies of $x$ away from one another without altering the observables of any topological quantum field theory.
This is exactly the feature that allows us to relate locally constant factorization algebras on $\RR^n$ to $E_n$-algebras.
Indeed, factorization algebras $F$ only allow measurements to be combined when they happen on isolated pairs of points, so we obtain a family of multiplications $F_x \otimes F_x \to F_x$ parametrized by $S^{n-1}$, precisely as the structure of an $E_n$-algebra demands.

In fact, it turns out that the isolability space attached to $\RR^n$ can be identified with a combinatorially-defined object.
This we show also in \cref{sec:homotopical}.
We relate this to Cepek's combinatorial description of the Ran space of $\RR^n$,
from which the identification of locally constant factorization algebras on $\RR^n$ with $E_n$-algebras is an immediate consequence.

Speaking of the Ran space, our story so far has not made it clear what advantage, if any, an isolability structure might have over the usual Ran space.
After all, the purpose of the Ran space is to parametrize tuples of distinct points of a geometric object.
Indeed, for many examples (the \enquote{$2$-skeletal} isolability spaces), giving an isolability structure is equivalent to giving the Ran space as a commutative monoid in spans.

However, in many examples of quantum field theories, one wants to keep track of not only observables at points, but along \emph{extended objects}.
So in many cases, it is not actually the \enquote{spacetime object} $X$ itself that needs to possess the isolability structure, but rather a related object we call the \emph{observer stack} $O_X$.
The observer stack is an additional datum, which can be extremely general;
in effect it is some moduli space of maps (often embeddings) into $X$, which expresses the options for the support an observable may have.
For example, if $X$ is a variety over the complex numbers, then one may define $O_X$ as a Hilbert stack of subvarieties of $X$.
We discuss this in \cref{sub:observer}.

In this sort of setting, we do not know a definition of Ran space that is sufficient to recover this.
Hennion, Melani, and Vezzosi \cite{hennionmelanivezzosi} have taken a step in this direction for a surface over $\CC$, and
their picture of factorization structures allows for more sophisticated combinations of observables than what we manage to do here.
(See, however, \ref{sub:comments-factorization} for a speculative strategy to capture such combinations.)

\bigskip

\noindent Our second contention: the geometry of isolability objects is controlled by \emph{twofold symmetric monoidal structures}, and factorization structures are defined in terms of these.
To explain this slogan, we have to say what we mean both by \enquote{the geometry} of isolability objects and by \enquote{twofold} symmetric monoidal structures.

In the middle of the 20th century, the French algebraic geometry school realized that the structure one needs on a topological space (or similar object) $X$ in order to \enquote{do geometry} is a sheaf $\mathscr{O}$ of local rings (or similar objects).
The sections of $\mathscr{O}$ are \emph{functions}.
We require that $(X,\mathscr{O})$ is locally isomorphic to some basic objects
(\textit{e.g.}, Euclidean spaces with smooth functions, or complex Stein manifolds with holomorphic functions, or affine schemes with regular functions, or affinoid adic spaces with their functions).
Sheaves of rings give rise to sheaves of module categories: 
one starts with suitable categories $\AA(X)$ of $\mathscr{O}$-modules on the basic objects and extends $\AA$ to all of our geometric objects by gluing. 
For example, this is how we extend the assignment $\Spec R \mapsto \Mod(R)$ to the assignment $S \mapsto \QCoh(S)$ on schemes. 

Over the course of the development of algebraic geometry in the 1960s, it became clear that the sheaf of symmetric monoidal categories $X \mapsto \AA(X)$ is really what contains the geometric content.
For example, $\Ext$-groups in $\AA(X)$ are cohomology groups of $X$, and 
all the various features of cohomology -- structures, functorialities, and dualities -- came to be elegantly expressed in terms of the sheaf $\AA$.
It no longer particularly mattered whether $\AA(X)$ happens to be a category of modules over its unit object.
One can, for example, look at varieties over a finite field through the lens of the cohomology of $\el$-adic sheaves, and tell a very different story about their geometric behavior than what one gets from the cohomology of quasicoherent sheaves.

It is thus natural to categorify our notion of geometric object:
instead of a ringed space $(X,\mathscr{O})$, we consider a pair $(\XX,\AA)$ consisting of a category $\XX$ of geometric objects along with a sheaf of symmetric monoidal categories $\AA$.%
\footnote{
  The full apparatus of geometry becomes available when $(\XX,\AA)$ is part of a \emph{six functor formalism}.
  We hope to return to the interaction of this structure with the theory of factorization algebras in future work.
}
We think of $\AA$ as the \enquote{theory of sheaves} on the objects of $\XX$.

An isolability structure on an object $X \in \XX$ now creates a kind of \enquote{parallax} on the symmetric monoidal category $\AA(X)$ of sheaves.
Indeed, consider what happens if we simply apply our functor $\AA$ to each object $X^{\lambda}$.
The result is a diagram of categories $\AA(X^{\bullet})$ indexed by our combinatorial category $\DD$.
This diagram is multiplicative in \emph{two} senses.
To illustrate, suppose that $F,G \in \AA(X)$.
The \enquote{external} tensor product $F \boxtimes G$ lies in $\AA(X^2)$ whose stalk at $(x,y)$ is $F_x \otimes G_y$.
The two multiplicative structures on $\AA(X^{\bullet})$ arise from restricting $F \boxtimes G$ to the locus $\{x = y\}$ on one hand, or to the locus $\{x \neq y\}$ on the other.
That is, one obtains the usual tensor product -- which we write as $F \righttensor G$ -- by pulling back $F \boxtimes G$ along the diagonal $X^1 \inclusion X^2$;
dually, one may also pull back $F \boxtimes G$ along the \enquote{complement} $X^{1 \oplus 1} \inclusion X^2$ to get a different tensor product, which we write as $F \lefttensor G$.
The interplay between these two multiplicative structures is precisely the algebraic feature that allows the theory of factorization algebras.

Our combinatorial categories offer a comfortable vantage point from which to study this structure.
The category $\DD$ has \emph{two} symmetric monoidal structures:
a \emph{left} symmetric monoidal structure $\leftsum$, which is the disjoint union of cographs, and
a \emph{right} symmetric monoidal structure $\rightsum$, which is the \enquote{connected sum} or \enquote{join} of cographs.
The right and left structures do not intertwine in a strict sense
(for otherwise they would coincide by Eckmann--Hilton!).
Rather, there is a noninvertible \emph{intertwiner map}
\[
  (A \rightsum B) \leftsum (C \rightsum D) \to (A \leftsum C) \rightsum (B \leftsum D) \period
\]
Category theorists have had language for this structure for some time \cite{MR1975810,MR1982884}:
$\DD$ is a \emph{twofold symmetric monoidal category}.
(There is also the slightly more general notion of \emph{duoidal bicategory} and \emph{duoid} therein \cite{MR2724388,MR3040601}.)
Earlier still, Borcherds \cite{MR1865087} highlighted the importance of twofold monoidal categories (without using the name) already in the late 1990s.
At around the same time, Beilinson and Drinfeld made use of a variant of this structure in their big text \cite{MR2058353} on chiral algebras, where they are called \enquote{compound tensor categories}.
With our categories of cographs in hand, it is not difficult to set up a working theory of twofold symmetric monoidal categories in general (\cref{sec:twofold}).

In fact our category $\DD$ is the \emph{universal} twofold symmetric monoidal category containing a commutative monoid for the left tensor product.
That means that a symmetric monoidal category can be uneconomically described as a twofold symmetric monoidal functor $\DD \to \Cat$.
This lack of economy allows us to introduce what we call \emph{parallax}.
A \emph{parallax symmetric monoidal category} is a \emph{lax} symmetric monoidal functor $\DD \to \Cat$
(or, equivalently, a functor from $\DD$ to symmetric monoidal categories).
In particular, $\AA(X^{\bullet})$ is a parallax symmetric monoidal category.

Now if $\VV_{\bullet}$ and $\WW_{\bullet}$ are parallax symmetric monoidal categories, then
we can ask for a functor $\VV_{\bullet} \to \WW_{\bullet}$ to be either \emph{left} or \emph{right} symmetric monoidal.
\emph{Factorization algebras} on $X$ with coefficients in the sheaf theory $\AA$ are functors $1_{\bullet} \to \AA(X^{\bullet})$ that are right symmetric monoidal. 

When $X^{\bullet}$ is an isolability space and $\AA$ is the theory of constructible sheaves,
we obtain (\ref{sub:constructible}) the theory of \emph{constructible} and \emph{locally constant}
factorization algebras.
These are the factorization algebras that describe topological quantum field theories.

Factorization stacks (\ref{sub:factorizationstacks}) are the most primitive and unstructured kind of factorization algebra.
That's not to say that there aren't interesting examples, however.
We give a simple example -- the \emph{Beilinson--Drinfeld Grassmannian}, in quite a lot of generality.
For an object $X$ and a pointed stack $B$ on $X$, we construct (\ref{sub:heckegrass}) the Grassmannian factorization stack on a suitable observer stack $O_X^\bullet$.
When $X$ is a curve over $\CC$, $B = BG$ for a group scheme $G$ over $X$, and $O_X = X$, this Grassmannian recovers the one introduced by Beilinson--Drinfeld.
With our formalism, we may take $X$ to be the Fargues--Fontaine curve and $O_X$ the \enquote{mirror curve} $\Div^1$;
this amounts to a factorization enhancement of a construction of Scholze--Weinstein \cite{MR4446467}.

\subsection*{Limitations}
\label{sub:Limitations}
We paint a picture of factorization structures that works in quite a lot of generality.
But only quite.
Our approach still suffers from a number of defeciencies,
of which we highlight only four.
\begin{enumerate}
  \item In this paper, we do not incorporate the \emph{manifold} context for factorization algebras --
    \ie, factorization algebras in the sense of Costello--Gwilliam --
    into our setup.
    We aim to return to this point in later work.
  \item It is not clear to us how to develop what might be called a \enquote{$\beta$-version} of factorization algebras in our picture.
    This would provide structures for combining extended observables/operators in more complicated ways than we manage to do here.
    We offer some speculations about the algebraic format of such a theory at the end of the paper (\ref{sub:comments-factorization}).
    At this point, there is much we do not know. 
  \item In this paper we do not attempt to look through the Koszul dual lens of the Lie-algebra description of chiral algebras, and
    we do not currently fully understand how to incorporate that story into our picture.
    This seems a promising avenue of study.
  \item Perhaps the least satisfying aspect of our picture is that it is only a picture.
    Our eventual aim is to make sense of arithmetic quantum field theories, but
    this paper only identifies some first pieces of the formalism.
    The objects of study here do seem to be \enquote{correct} in the weak sense that
    they capture the structures observed in certain flavors of quantum field theory, and
    they are at the same time the product of natural principles and a simple formalism. 
\end{enumerate}

\subsection*{Conventions}
\label{sub:Conventions}
This paper is written with \enquote{implicit $\infty$} conventions, so all algebraic structures (categories, topoi, operads, commutative algebras, \etc) are to be interpreted in the proper homotopical sense ($\infty$-categories, $\infty$-topoi, $\infty$-operads, $E_{\infty}$-algebras, \etc) by default.

We use the term \enquote{space} for what might elsewhere be called \enquote{anima} or \enquote{$\infty$-groupoid} or \enquote{homotopy type}.

All topological spaces should be taken to be compactly generated by default, and all constructions performed with topological spaces (products, mapping spaces, passage to subspaces, \etc) should be k-ified as needed.

\subsection*{Thanks}%
\label{sub:thanks}
The long development period for this work means that I now have the joy of thanking a lot of people.
The ideas here slowly began to coagulate in 2018 in a conversation with Tomer Schlank.
I'm grateful to him for listening to my clumsy early efforts to make sense of arithmetic quantum field theories.

Since then, Willow Bevington, Malthe Sporring, and Lucy Spouncer have evinced even more patience; 
they have tolerated dozens of versions of this story as it's evolved.
Their comments have been instrumental in helping me understand the unfamiliar combinatorial structures that bespeckle this paper.
Malthe in particular has helped me iron out the definition of twofold symmetric monoidal structures, and
together we have really begun to understand the strange new world of \emph{twofold algebra}.

I learned a lot from the participants in the GRIFT seminar at the Maxwell Institute.
Conversations with Tudor Dimofte, Jan Pulmann, and Dominik Rist have been particularly valuable and inspiring.

Behrang Noohi and Magnus Carlson made me aware of the connection to Borcherds' work on quantum vertex algebras \cite{MR1865087}.

Insights from Owen Gwilliam and Pavel Safronov are everywhere in this paper.
Owen kindly read a draft of this paper and offered many suggestions for improvement and words of encouragement. 

Emily Roff has helped me articulate a number of aspects of this story.
In one particular conversation, she insisted that I take the relative free constructions from onefold to twofold symmetric monoidal categories more seriously.
That ended up being vital to my understanding of the real role of twofold algebra in this story.

Special thanks to Christine Barwick, Ron Barwick, Scott Morgan (\textsc{aka} Loscil), Alex Sear, Paul Simeone, Lou Vaickus, and Maggie.

\bookmark[page=1,level=1]{Contents}
\setcounter{tocdepth}{2}
\tableofcontents

\section{The combinatorics of isolation}%
\label{sec:combinatorics}

The combinatorial structures that underlie all our work in this paper are special graphs called \emph{cographs} \cite{MR619603}.
On one hand, the vertices of cographs index tuples of points in a space where certain pairs --
those connected by an edge in the graph --
are required to be distinct.
This leads us to the theory of \emph{isolability objects} (\cref{sec:isolability}).
On the other hand, cographs also index multiplication laws in categories with two tensor products that intertwine in a lax sense.
These are \emph{twofold monoidal categories} (\cref{sec:twofold}).
The interactions between these structures is what gives rise to the theory of factorization algebras.

Fortunately, cographs are so simple that we can classify them completely.
Our approach is to exploit that fact and to develop a fine understanding of these combinatorics.

\subsection{Cographs}%
\label{sub:cographs}
For us, a \emph{graph} $\angs{\lambda} = (V,E)$ consists of a set $V = V\angs{\lambda}$ of \emph{vertices} along with a symmetric relation $E = E\angs{\lambda} \subseteq V \times V$ of \emph{edges}.
We do not assume that $E$ is irreflexive;
in other words we permit our graphs to have loops.
If we want to dispense with them, we may pass to the maximal irreflexive subgraph $\angs{\lambda}_{\textit{irr}} \subseteq \angs{\lambda}$.
If we want them all, we may pass to the reflexive hull $\angs{\lambda}_{\textit{refl}} \supseteq \angs{\lambda}$.

We are interested in graphs $\angs{\lambda} = (V,E)$ that satisfy the following condition:
\begin{equation}\label{eqn:P4freeness}
  \forall w,x,y,z \in V \quad \{(w,x), (y,x), (y,z)\} \subseteq E \implies \{(y,w), (w,z), (z,x)\} \cap E \neq \varnothing \period
\end{equation}
For reasons we will explain, such graphs
(or, more usually, their maximal irreflexive subgraphs)
are called \enquote{$P_4$-free graphs} or \enquote{complement-reducible graphs};
graph-theorists have shortened this latter phrase to the word \enquote{cograph}.
The appeal of this term's brevity is greater than our discomfort with its uncategorical adoption of the preposition \enquote{co}.
So we call every graph $\Gamma$ satisfying the above condition a \emph{cograph}.

The \emph{trivial} cograph structure on a set $V$ is $V_{\triv}=(V,\varnothing)$,
in which no two elements are isolated from each other.
For every $n \geq 0$, we define:
\[
  \angs{\ul{n}} \coloneq \{1,\dots,n\}_{\triv} \period
\]

If $\angs{\lambda} = (V,E)$ is a cograph, then its \emph{negation} $\neg \angs{\lambda} \coloneq (V, (V \times V) \smallsetminus E)$ is a cograph as well.
So, at the opposite extreme from trivial cographs, we have the \emph{complete cographs} $(V, V \times V)$
in which all pairs of elements are isolated from each other.
For every $n \geq 0$, we define:
\[
  \angs{\ol{n}} \coloneq \neg \angs{\ul{n}} \period
\]

Here, for example, is $\angs{\ol{5}}$:
\begin{center}
\begin{tikzpicture}
  \tikzstyle{vertex}=[circle,fill=black,minimum size=4pt,inner sep=0pt]
  \node[vertex] (v_1) at (1,0) {};
  \node[vertex] (v_2) at (0.309,0.951) {};
  \node[vertex] (v_3) at (-0.809,0.588)  {};
  \node[vertex] (v_4) at (-0.809,-0.588)  {};
  \node[vertex] (v_5) at (0.309,-0.951)  {};
  \draw (v_1) -- (v_2) -- (v_3) -- (v_4) -- (v_5) -- (v_1) -- cycle;
  \draw (v_1) -- (v_3) -- (v_5) -- (v_2) -- (v_4) -- (v_1) -- cycle;
  \draw (v_1) to[in=30,out=-30,loop] (v_1);
  \draw (v_2) to[in=102,out=42,loop] (v_2);
  \draw (v_3) to[in=174,out=114,loop] (v_3);
  \draw (v_4) to[in=-174,out=-114,loop] (v_4);
  \draw (v_5) to[in=-102,out=-42,loop] (v_5);
\end{tikzpicture}
\end{center}
The graph $K_n$ is the maximal irreflexive subgraph $\angs{\ol{n}}_{\textit{irr}}$. 

A nonexample is the graph $P_4$: 
\begin{tikzpicture}
  \tikzstyle{vertex}=[circle,fill=black,minimum size=4pt,inner sep=0pt]
  \node[vertex] (v_1) at (-1.5,0) {};
  \node[vertex] (v_2) at (-0.5,0) {};
  \node[vertex] (v_3) at (0.5,0)  {};
  \node[vertex] (v_4) at (1.5,0)  {};
  \draw (v_1) -- (v_2) -- (v_3) -- (v_4) -- cycle;
\end{tikzpicture}.
In fact, this is the \emph{universal} nonexample.
Indeed, complete reducibility is a \emph{hereditary} property;
that is, if $\angs{\lambda} = (V,E)$ is a cograph, then so is any induced subgraph $\angs{\mu} = (W,E|W) \subseteq \angs{\lambda}$.
It follows that a graph $\angs{\lambda}$ is a cograph iff $\angs{\lambda}_{\textit{irr}}$ does not contain the graph $P_4$ as an induced subgraph
(whence the alternative name for cographs -- \enquote{$P_4$-free graphs}).

Every equivalence relation is a cograph.
A cograph $\angs{\lambda}$ is an \emph{apartness relation} iff $\neg\angs{\lambda}$ is an equivalence relation.
Apartness relations are also called \enquote{inequivalence} relations.
For example, consider the complete bipartite graph 
\begin{center}
\begin{tikzpicture}
  \tikzstyle{vertex}=[circle,fill=black,minimum size=4pt,inner sep=0pt]
  \node[vertex] (v_1) at (-0.5,-0.5) {};
  \node[vertex] (v_2) at (-0.5,0.5) {};
  \node[vertex] (v_3) at (0.5,-1)  {};
  \node[vertex] (v_4) at (0.5,0)  {};
  \node[vertex] (v_5) at (0.5,1)  {};
  \draw (v_1) -- (v_3) -- (v_2) -- (v_4) -- (v_1) -- cycle;
  \draw (v_1) -- (v_5) -- (v_2) -- cycle;
\end{tikzpicture}
\end{center}
This is an apartness relation, normally denoted $K_{2,3}$.
In the notation we discuss below (\cref{sub:sums}), this is $\angs{\ul{2} \rightsum \ul{3}}$.

More generally, if $f \colon V \to W$ is a map of sets, then
we obtain an apartness relation $V_{\!f}$ with vertex-set $V$ in which $x$ is isolated from $y$ iff $f(x) \neq f(y)$.

In this paper, the irreflexive cographs are used to make sense of \emph{isolability structures} (\cref{sec:isolability}),
which are structures on a geometric object that permit one to say whether two points are \enquote{different} or \enquote{distinct}.
The reflexive cographs are used later to make sense of \emph{twofold symmetric monoidal structures} (\cref{sec:twofold}),
which are pairs of symmetric monoidal structures related by an intertwiner map.

\subsection{Categories of isolation}%
\label{sub:categories}
A proliferation of indexing categories will appear in this paper.
The most important for us are the following six $1$-categories:
\begin{itemize}
  \item[$\FF$,] the category of finite sets;
  \item[$\GG$,] the category of finite cographs and relation-preserving maps;
  \item[$\DD$,] the category of finite irreflexive cographs and relation-preserving maps;
  \item[$\EE$,] the category of finite reflexive cographs and relation-preserving maps;
  \item[$\DD_{\leq 2}$,] the category of finite apartness relations and isolation-preserving maps;
  \item[$\EE_{\leq 2}$,] the category of finite equivalence relations and equivalence-preserving maps.
\end{itemize}
Mnemonic: $\FF$ for \enquote{finite},
$\GG$ for \enquote{graph},
$\DD$ for \enquote{different} or \enquote{distinct} or \enquote{disjoint},
$\EE$ for \enquote{equivalent}.
The meaning of the number $2$ will become more apparent in \cref{sub:depthfiltration}.

We have a diagram
\[
  \begin{tikzcd}[sep=1.5em, ampersand replacement=\&]
    \DD_{\leq 2} \arrow[r,hook] \arrow[drr] \& \DD \arrow[r,hook] \arrow[dr] \& \GG \arrow[d] \& \EE \arrow[l,hook'] \arrow[dl] \& \arrow[l,hook'] \arrow[dll] \EE_{\leq 2} \\
    {} \& {} \& \FF \& {} \& {} 
  \end{tikzcd}
\]
The forgetful functors to $\FF$ carry a graph $\angs{\lambda}$ to its set $V\angs{\lambda}$ of vertices.
These are all cartesian fibrations;
we will refer to the five categories on the top row as \emph{fibered categories} over $\FF$.
The horizontal inclusions are fully faithful, and they preserve and reflect the cartesian maps.

We may define nonunital variants of these categories by letting $\FF_s \subset \FF$ be the subcategory of nonempty finite sets and surjections, and
then pulling back the diagram above along the inclusion $\FF_s \inclusion \FF$ to define $\GG_s$, $\DD_s$, $\EE_s$, $\DD_{s,\leq 2}$, and $\EE_{s,\leq 2}$.

The negation operation $\Gamma \mapsto \neg\Gamma$ is not a functor on $\GG$.
It is however a functor on the wide subcategory $\iota \GG \subset \GG$ of isomorphisms, and 
even better, it's a functor in the \emph{fibered opposite} of $\GG$ over $\FF$, as we shall see (\cref{sub:duality}).
In any case, $\neg$ exchanges the objects of $\DD_{\leq 2}$ and $\EE_{\leq 2}$, and it exchanges the objects of $\DD$ and $\EE$.

Let us write $\angs{n} \coloneq \{1,\dots,n\} \in \FF$.
The assignment $\angs{n} \mapsto \angs{\underline{n}}$ is a fully faithful left adjoint cartesian section into the fibered categories $\DD_{\leq 2}$, $\DD$, and $\GG$. 
Dually, the assignment $\angs{n} \mapsto \angs{\ol{n}}$ is a fully faithful right adjoint cartesian section into the fibered categories $\EE_{\leq 2}$, $\EE$, and $\GG$.

\begin{wrnnn}
  The inclusions $\DD_{\leq 2} \subset \GG$ and $\DD \subset \GG$ preserve all the colimits that exist in $\DD_{\leq 2}$ and $\DD$, but
  these subcategories are not stable under finite colimits.
  For example, let us say that
  maps $\phi \colon \angs{\lambda'} \to \angs{\lambda}$ and
  $f \colon \angs{\lambda'} \to \angs{\mu'}$ in $\DD$
  are \emph{incompatible} iff
  there exist $i,j \in V\angs{\lambda'}$ such that
  $(\phi(i),\phi(j)) \in E\angs{\lambda}$, but
  $f(i) = f(j)$ in $\angs{\mu'}$.
  In this case, the span that $\phi$ and $f$ form cannot be completed to a square in $\DD$.
  Its pushout exists in $\GG$, however. 
  The simplest example of this phenomenon: let $f$ be the identity as a map $\angs{\underline{2}} \to \angs{\ol{2}}_{\textit{irr}}$, and
  let $\phi$ be the unique map $\angs{\underline{2}} \to \angs{\underline{1}}$;
  these are incompatible, but the pushout in $\GG$ is $\angs{\ol{1}}$.
\end{wrnnn}

\subsection{Sums of cographs}%
\label{sub:sums}
If $\angs{\lambda} = (V,E)$ and $\angs{\mu} = (W,F)$ are cographs, then
let $\angs{\lambda \rightsum \mu} = \angs{\lambda} \rightsum \angs{\mu} \coloneq (V \sqcup W, R)$, where $R$ is the \emph{largest} relation such that $E = R|\angs{\lambda}$ and $F = R|\angs{\mu}$ -- \emph{i.e.}, such that $\angs{\lambda}$ and $\angs{\mu}$ are induced subgraphs. 
We call this the \emph{connected sum} of $\angs{\lambda}$ and $\angs{\mu}$.
Dually, let $\angs{\lambda \leftsum \mu} = \angs{\lambda} \leftsum \angs{\mu} \coloneq (V \sqcup W,S)$, where $S$ is the \emph{smallest} relation such that $\angs{\lambda}$ and $\angs{\mu}$ are induced subgraphs. 
We call this the \emph{disconnected sum} of $\angs{\lambda}$ and $\angs{\mu}$.

Both the connected sum $\angs{\lambda \rightsum \mu}$ and the disconnected sum $\angs{\lambda \leftsum \mu}$ are cographs.
In fact, these operations define two distinct symmetric monoidal structures on $\GG$, with common unit $\varnothing$.
Each of these symmetric monoidal structures restricts to symmetric monoidal structures on $\DD$ and $\EE$.
The connected sum $\rightsum$ restricts to a symmetric monoidal structure on $\DD_{\leq 2}$, and
the disconnected sum $\leftsum$ restricts to a symmetric monoidal structure on $\EE_{\leq 2}$.

The sums $\rightsum$ and $\leftsum$ are dual with respect to negation: $\neg\angs{\lambda \rightsum \mu} = \neg \angs{\lambda} \leftsum \neg \angs{\mu}$.
When we reflect on this, two natural transformations become evident.

First, the identity map on the disjoint union $V\angs{\lambda} \sqcup V\angs{\mu}$ can be viewed as a morphism $\angs{\lambda \leftsum \mu} \to \angs{\lambda \rightsum \mu}$.
More subtly, the natural bijection
\[
  V\angs{\lambda} \sqcup V\angs{\mu} \sqcup V\angs{\nu} \sqcup V\angs{\xi} \equivalence V\angs{\lambda} \sqcup V\angs{\nu} \sqcup V\angs{\mu} \sqcup V\angs{\xi}
\]
defines a natural morphism
\[
  \angs{(\lambda \rightsum \mu) \leftsum (\nu \rightsum \xi)} \to \angs{(\lambda \leftsum \nu) \rightsum (\mu \leftsum \xi)} \comma
\]
which we call the \emph{intertwiner}.

This intertwiner is not an isomorphism,
for otherwise Eckmann--Hilton would imply that $\leftsum = \rightsum$.
Rather, the intertwiner exhibits $\leftsum$ as a normal oplax symmetric monoidal functor with respect to $\rightsum$
or, equivalently, $\rightsum$ as a normal lax symmetric monoidal functor with respect to $\leftsum$.
The data $(\GG,\leftsum,\rightsum)$ comprise a \emph{twofold symmetric monoidal category} in the sense of Balteanu, Fiedorowicz, Schwänzl, and Vogt \cite{MR1982884}.
We will describe a homotopy-coherent version of this structure below (\cref{sec:twofold}).

The category $\FF$ is symmetric monoidal with respect to disjoint union $\sqcup$.
The forgetful functor $\GG \to \FF$ is symmetric monoidal relative to both of the symmetric monoidal structures on $\GG$.
That is, $(\GG,\leftsum,\rightsum) \to (\FF,\sqcup,\sqcup)$ is a twofold symmetric monoidal functor.

The sums provide us with a pleasant notation for cographs.
We have already introduced the trivial cograph $\angs{\ul{n}}$ and the complete cograph $\angs{\ol{n}}$.
We can now combine these using our two sums.

For example, we denote by $\angs{\ol{2} \leftsum \ol{2} \leftsum \ol{1}}$ the equivalence relation on $\{1,2,3,4,5\}$ in which the equivalence classes are $\{1,2\}$, $\{3,4\}$, and $\{5\}$.
Its negation is the apartness relation $\angs{\ul{2} \rightsum \ul{2} \rightsum \ul{1}}$:
\[
\begin{tikzpicture}
  \tikzstyle{vertex}=[circle,fill=black,minimum size=4pt,inner sep=0pt]
  \node[vertex] (a_1) at (-1,-0.5) {};
  \node[vertex] (a_2) at (-1,0.5) {};
  \node[vertex] (b_1) at (0,-0.5)  {};
  \node[vertex] (b_2) at (0,0.5)  {};
  \node[vertex] (c_1) at (0.5,0)  {};
  \draw (a_1) -- (b_1) -- (a_2) -- (b_2) -- (a_1) -- cycle;
  \draw (a_1) -- (c_1) -- (a_2) -- cycle;
  \draw (b_1) -- (c_1) -- (b_2) -- cycle;
\end{tikzpicture}
\]

Here's another example.
The intertwiner $\angs{(\ul{1} \rightsum \ul{2})\leftsum(\ul{1} \rightsum \ul{2})} \to \angs{(\ul{1} \leftsum \ul{1}) \rightsum (\ul{2} \leftsum \ul{2})} = \angs{\ul{2} \rightsum \ul{4}}$ is the inclusion of the disconnected graph
\[
\begin{tikzpicture}
  \tikzstyle{vertex}=[circle,fill=black,minimum size=4pt,inner sep=0pt]
  \node[vertex] (a) at (-1.5,0) {};
  \node[vertex] (b_1) at (-0.5,0.5) {};
  \node[vertex] (b_2) at (-0.5,-0.5)  {};
  \node[vertex] (c) at (0.5,0)  {};
  \node[vertex] (d_1) at (1.5,0.5)  {};
  \node[vertex] (d_2) at (1.5,-0.5)  {};
  \draw (b_1) -- (a) -- (b_2) -- cycle;
  \draw (d_1) -- (c) -- (d_2) -- cycle;
\end{tikzpicture}
\]
into the complete bipartite graph
\[
\begin{tikzpicture}
  \tikzstyle{vertex}=[circle,fill=black,minimum size=4pt,inner sep=0pt]
  \node[vertex] (a) at (-1.5,0) {};
  \node[vertex] (b_1) at (-0.5,0.5) {};
  \node[vertex] (b_2) at (-0.5,-0.5)  {};
  \node[vertex] (c) at (0.5,0)  {};
  \node[vertex] (d_1) at (1.5,0.5)  {};
  \node[vertex] (d_2) at (1.5,-0.5)  {};
  \draw (a) -- (b_1) -- (c) -- (b_2) -- (a) -- (d_1) -- (c) -- (d_2) -- (a) -- cycle;
\end{tikzpicture}
\]

By nesting more sums, we can notate increasingly complicated cographs, such as
\[
  \angs{(\ul{2} \rightsum \ul{1}) \leftsum (\ul{4} \rightsum (\ol{2} \leftsum \ol{2}))} \comma
\]
which is
\[
\begin{tikzpicture}
  \tikzstyle{vertex}=[circle,fill=black,minimum size=4pt,inner sep=0pt]
  \node[vertex] (a_1) at (1.5,-1.5) {};
  \node[vertex] (a_2) at (1.5,-0.5) {};
  \node[vertex] (b_1) at (1.5,0.5)  {};
  \node[vertex] (b_2) at (1.5,1.5)  {};
  \node[vertex] (c_1) at (0.5,-1.5)  {};
  \node[vertex] (c_2) at (0.5,-0.5)  {};
  \node[vertex] (c_3) at (0.5,0.5)  {};
  \node[vertex] (c_4) at (0.5,1.5)  {};
  \node[vertex] (d_1) at (-0.5,0)  {};
  \node[vertex] (e_1) at (-1.5,-0.5)  {};
  \node[vertex] (e_2) at (-1.5,0.5)  {};
  \draw (a_1) to[in=0,out=-60,loop] (a_1);
  \draw (a_2) to[in=0,out=60,loop] (a_2);
  \draw (a_1) -- (a_2) -- cycle;
  \draw (b_1) to[in=0,out=-60,loop] (b_1);
  \draw (b_2) to[in=0,out=60,loop] (b_2);
  \draw (b_1) -- (b_2) -- cycle;
  \draw (a_1) -- (c_1) -- (a_2) -- (c_2) -- (b_1) -- (c_3) -- (b_2) -- (c_4) -- (a_1) -- (c_2) -- (b_2) -- (c_1) -- (b_1) -- (c_4) -- (a_2) -- (c_3) -- (a_1) -- cycle;
  \draw (e_1) -- (d_1) -- (e_2) -- cycle;
\end{tikzpicture}
\]

In fact, \emph{every} cograph can be expressed in this way.
That is, cographs form the smallest category of graphs containing the singletons $\angs{\ul{1}}$ and $\angs{\ol{1}}$ that is closed under either one of the two sums and negation.
This is the meaning of the phrase \enquote{complement reducibility}.
This fact is well-known among graph-theorists \cite{MR1686154}, but
let us reprove it, as an excuse to introduce an interesting filtration by \enquote{depth}.

\subsection{Depth filtration}%
\label{sub:depthfiltration}
In this section, we will filter the category $\GG$ in two dual, ways:
first by \emph{depth}:
\[
  \GG_{\leq 1} \subset \GG_{\leq 2} \subset \cdots \GG \comma
\]
and then by \emph{co-depth}:
\[
  \GG^{\leq 1} \subset \GG^{\leq 2} \subset \cdots \GG \period
\]
These filtrations are dual, but they are also related by a shift.
Restricting these filtrations to $\DD$ and $\EE$, we obtain filtrations $\DD_{\leq \bullet}$ and $\EE^{\leq \bullet}$ of these categories.
(There are also filtrations $\DD^{\leq \bullet}$ and $\EE_{\leq \bullet}$.
These turn out to have less relevance for us.)

For every graph $\Gamma = (V,E)$,
let $\ul{R}$ be the equivalence relation generated by $E$, and
let $\ol{R}$ be the equivalence relation generated by the negation $\neg E$.
We may say that vertices $x$ and $y$ are \emph{connected} iff $x \ul{R} y$, and
that $x$ and $y$ are \emph{co-connected} iff $x \ol{R} y$.
Note that $\ol{R}$ is \emph{not} the negation of $\ul{R}$:
to be connected is to be co-connected in the negation;
there may be pairs of vertices that are both connected and co-connected.%
\footnote{This language is not ideal.
Are there better terms?}
We thus define the sets of \emph{connected} and \emph{co-connected components}:
\[
  \ul{\sigma}\Gamma \coloneq V/\ul{R} \comma \andeq \ol{\sigma}\Gamma \coloneq V/\ol{R} \period
\]
If $\ul{\sigma}\Gamma$ is a singleton, then $\Gamma$ is \emph{connected};
if $\ol{\sigma}\Gamma$ is a singleton, then $\Gamma$ is \emph{co-connected}.

The graph $P_4$ is both connected and co-connected.

At the other extreme, if $\Gamma = (V,E)$ is a \emph{cograph}, then vertices $x$ and $y$ are connected iff
there exists $w \in V$ such that both $(x,w)$ and $(w,y)$ are edges of $\Gamma_{\textit{refl}}$.
Dually, $x$ and $y$ are co-connected iff
there exists $w \in V$ such that neither $(x,w)$ nor $(w,y)$ are edges of $\Gamma_{\textit{irr}}$.

\begin{lem}
  Every cograph is either connected or co-connected.
\end{lem}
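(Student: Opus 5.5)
Every cograph is either connected or co-connected; I would prove this using the characterization stated just before the lemma. In a cograph $\Gamma = (V,E)$, two vertices are connected iff they have a common neighbour in $\Gamma_{\textit{refl}}$. Dually, they are co-connected iff they have a common non-neighbour in $\Gamma_{\textit{irr}}$.

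I would argue by contradiction. Suppose $\Gamma$ is neither connected nor co-connected; in particular $V \neq \varnothing$. The goal is to produce an induced $P_4$ in $\Gamma_{\textit{irr}}$, i.e.\ a violation of \eqref{eqn:P4freeness}.

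\textbf{Step 1: find a disconnected pair that is co-connected.} Since $\Gamma$ is not co-connected, $\ol{R}$ has at least two classes. Pick $a, c$ in different $\ol{R}$-classes. Two vertices in different co-connected components must be adjacent in $\Gamma_{\textit{irr}}$ (as $a\neq c$). Hence every vertex of one $\ol{R}$-class is adjacent to every vertex of every other class. So if $\ol{\sigma}\Gamma$ had two or more classes, any two vertices would be joined by a path of length at most $2$ through a vertex of another class, and $\Gamma$ would be connected, contradicting our assumption.

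Let me double-check this step: if $x,y$ lie in different $\ol{R}$-classes, they are adjacent. If they lie in the same class, pick $w$ in another class, which is adjacent to both. So $\ul{R}$ is total, and $\Gamma$ is connected. This is the contradiction, and it proves the lemma.

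\textbf{The real content: the edge fact.} The only nontrivial ingredient in Step 1 is that vertices in distinct co-connected components are adjacent. This is immediate: if $(x,y)\notin E$ then $x\,\neg E\,y$, so $x\ol{R}y$. Thus the argument does not even use $P_4$-freeness beyond the degenerate handling of loops. One must check that loops cause no trouble: connectivity is taken in the equivalence relation generated by $E$, and reflexive edges only affect $\neg E$ on the diagonal, which is already inside $\ol{R}$.

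The main thing I expect to need care with is this loops bookkeeping, plus the empty graph. For $V=\varnothing$, both $\ul{\sigma}\Gamma$ and $\ol{\sigma}\Gamma$ are empty, so neither is a singleton. I would handle this by noting that the lemma is meant for nonempty cographs, or by adopting the convention that the empty graph counts as connected; I would flag this.
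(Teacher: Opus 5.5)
Your argument is correct, and it takes a more elementary route than the paper's proof. It never uses $P_4$-freeness, so it proves the classical fact that for any nonempty graph either $\ul{R}$ or $\ol{R}$ is total. In fact it shows more: if the complement is disconnected, any two vertices are adjacent or have a common neighbour.

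The paper instead fixes a single pair $x,y$ that is not co-connected and notes that every vertex is adjacent to $x$ or to $y$. For arbitrary $u,v$ this leaves the case where the only visible route is a path $u,x,y,v$ (or $u,y,x,v$). The paper invokes the cograph condition to shortcut that path to a common neighbour. Since $\ul{R}$ is by definition the equivalence relation generated by $E$, the length-$3$ path already suffices for the lemma as stated. So the paper's appeal to $P_4$-freeness only buys the common-neighbour form of connectedness. You get that form for free by using the whole partition $\ol{\sigma}\Gamma$ rather than one pair: any vertex in a different $\ol{R}$-class is a common neighbour.

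Your write-up needs cleaning up in three places. The announced strategy, contradiction via an induced $P_4$, does not match the direct argument you actually give. The heading of Step 1 (``find a disconnected pair that is co-connected'') also describes something you never do. Finally, the inference ``neither connected nor co-connected, in particular $V \neq \varnothing$'' is backwards: the empty cograph is precisely the case that is neither. State nonemptiness as an explicit hypothesis, which the paper's proof also assumes tacitly.
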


\begin{proof}
  Assume that $\Gamma$ is a cograph that is not co-connected,
  and let $x$ and $y$ be vertices that are not co-connected;
  that is, for every vertex $w$, either $(x,w)$ is an edge or $(w,y)$ is an edge of $\Gamma_{\textit{irr}}$.
  In particular, $(x,y)$ is an edge of $\Gamma_{\textit{irr}}$.

  Now let $u$ and $v$ be vertices;
  we now use the cograph condition again to show that they are connected.
  Either $(u,x)$ is an edge or $(u,y)$ is an edge,
  and either $(v,x)$ is an edge or $(v,y)$ is an edge.
  So either $u$ and $v$ are connected through $x$, or they are connected through $y$, \emph{or}
  they are the endpoints of a copy of $P_4$ of the form $(u,x,y,v)$ or $(u,y,x,v)$.
  Hence the $P_4$-freeness ensures a connection between $u$ and $v$.
\end{proof}

In order to define our depth filtration, we introduce the \emph{paw} cographs%
\footnote{The usual \emph{paw graph} is $\paw_4$.}
\[
  \paw_k \coloneq \angs{(((\ul{1} \rightsum \ul{1}) \leftsum \ul{1}) \rightsum \ul{1}) \leftsum \cdots \ul{1}} \period
\]
In other words, $V(\paw_k)$ is the set $\{1,\dots,k\}$, and for any $1 \leq i < j \leq k$, we have an edge $(i,j)$ iff $j$ is even.
The cographs $\paw_{2n}$ are connected, and the cographs $\paw_{2n-1}$ are co-connected.

We may also have the \emph{co-paw} cographs
\[
  \paw^k \coloneq (\neg \paw_k)_{\textit{irr}} \period
\]
In $\paw^k$, if $i < j$, then there is an edge $(i,j)$ iff $j$ is odd.
The cographs $\paw^{2n+1}$ are connected, and the cographs $\paw^{2n}$ are co-connected.

\begin{table}
  \label{table:pawns}
  \centering
  \begin{tabular}{cc}
    $n$ & $\paw_n$ \\
    \hline
    $1$ &
    \begin{tikzpicture}[baseline=0]
      \tikzstyle{vertex}=[circle,fill=black,minimum size=4pt,inner sep=0pt]
      \node[vertex] (v_2) at (0,0) {};
      \node[fit=(current bounding box),inner sep=1mm]{};
    \end{tikzpicture}
    \\
    $2$ &
    \begin{tikzpicture}[baseline=0]
      \tikzstyle{vertex}=[circle,fill=black,minimum size=4pt,inner sep=0pt]
      \node[vertex] (v_1) at (-1,0) {};
      \node[vertex] (v_2) at (0,0) {};
      \draw (v_1) -- (v_2) -- cycle;
      \node[fit=(current bounding box),inner sep=1mm]{};
    \end{tikzpicture}
    \\
    $3$ &
    \begin{tikzpicture}[baseline=0]
      \tikzstyle{vertex}=[circle,fill=black,minimum size=4pt,inner sep=0pt]
      \node[vertex] (v_1) at (-1,0) {};
      \node[vertex] (v_2) at (0,0) {};
      \node[vertex] (v_3) at (1,0) {};
      \draw (v_1) -- (v_2) -- cycle;
      \node[fit=(current bounding box),inner sep=1mm]{};
    \end{tikzpicture}
    \\
    $4$ &
    \begin{tikzpicture}[baseline=0]
      \tikzstyle{vertex}=[circle,fill=black,minimum size=4pt,inner sep=0pt]
      \node[vertex] (v_1) at (-1,-0.5) {};
      \node[vertex] (v_2) at (-1,0.5) {};
      \node[vertex] (v_3) at (0,0)  {};
      \node[vertex] (v_4) at (1,0)  {};
      \draw (v_1) -- (v_2) -- (v_3) -- (v_1) -- cycle;
      \draw (v_3) -- (v_4) -- cycle;
      \node[fit=(current bounding box),inner sep=1mm]{};
    \end{tikzpicture}
    \\
    $5$ &
    \begin{tikzpicture}[baseline=0]
      \tikzstyle{vertex}=[circle,fill=black,minimum size=4pt,inner sep=0pt]
      \node[vertex] (v_1) at (-1,-0.5) {};
      \node[vertex] (v_2) at (-1,0.5) {};
      \node[vertex] (v_3) at (0,0) {};
      \node[vertex] (v_4) at (1,0) {};
      \node[vertex] (v_5) at (2,0) {};
      \draw (v_1) -- (v_2) -- (v_3) -- (v_1) -- cycle;
      \draw (v_3) -- (v_4) -- cycle;
      \node[fit=(current bounding box),inner sep=1mm]{};
    \end{tikzpicture}
    \\
    $6$ &
    \begin{tikzpicture}[baseline=0]
      \tikzstyle{vertex}=[circle,fill=black,minimum size=4pt,inner sep=0pt]
      \node[vertex] (v_1) at (-1.155,0) {};
      \node[vertex] (v_2) at (-0.577,1) {};
      \node[vertex] (v_3) at (-0.577,0.333) {};
      \node[vertex] (v_4) at (-0.577,-1) {};
      \node[vertex] (v_5) at (0,0) {};
      \node[vertex] (v_6) at (1,0) {};
      \draw (v_5) -- (v_2) -- (v_1) -- (v_3) -- (v_5) -- (v_1) -- (v_4) -- (v_5) -- (v_6) -- cycle;
      \draw (v_2) -- (v_3) -- cycle;
      \node[fit=(current bounding box),inner sep=1mm]{};
    \end{tikzpicture}
    \\
  \end{tabular}
  \caption{The first few cographs $\paw_n$}
\end{table}

A \emph{paw of depth $k$} in a graph $\Gamma$ is an embedding $x \colon \paw_k \inclusion \Gamma_{\textit{irr}}$ as an induced subgraph.
In other words, it's a sequence $x_1, \dots, x_k$ of vertices such that $x_1 \neq x_2$ and if $i<j$, then there is an edge $(x_i,x_j)$ iff $j$ is even.
(Note that these vertices are automatically pairwise distinct.)
The \emph{depth} of a vertex $w$ is the supremum (possibly $\infty$) of those $k$ for which there is a paw $x$ of depth $k$ with $w = x_1$.
The \emph{depth} of $\Gamma$ itself is the supremum (possibly $\infty$ or $0$) of the depths of its vertices.
In other words, $\Gamma$ is of depth $\leq k$ iff $\Gamma_{\textit{irr}}$ is \emph{$\paw_{k+1}$-free}
(\ie, it does not contain a paw of depth $k+1$).

Dually, a \emph{co-paw of depth $k$} in $\Gamma$ is an embedding $y \colon \paw^k \inclusion \Gamma_{\textit{irr}}$ as an induced subgraph.
In other words, it's a sequence $y_1, \dots, y_k$ of vertices such that $y_1 \neq y_2$, and if $i<j$, then there is an edge $(y_i,y_j)$ iff $j$ is odd.
The \emph{co-depth} of a vertex $w$ is the largest $k$ for which there is a paw $y$ of depth $k$ with $w = y_1$.
The \emph{co-depth} of $\Gamma$ is the supremum of the co-depths of its vertices.
In other words, $\Gamma$ is of co-depth $\leq k$ iff it does not contain a co-paw of depth $k+1$.
Thus the co-depth of $\Gamma$ is the depth of $\neg \Gamma$.

The empty cograph is of depth and co-depth $0$.
A singleton (\ie, either $\angs{\ul{1}}$ or $\angs{\ol{1}}$) is of depth and co-depth $1$.

The cographs of depth $\leq 1$ are those for which there is no edge between distinct vertices.
Dually, the finite cographs of co-depth $\leq 1$ are the cographs for which there is an edge between every pair of distinct vertices.

The co-paw $\paw^k$ has depth $k-1$, which by symmetry ensures that
depth and co-depth never differ by more than $1$.

\begin{lem}
  Let $\Gamma$ be a cograph containing at least two vertices.
  It is connected iff it is of even depth, iff it is of odd co-depth.
  It is co-connected iff it is of odd depth, iff it of even co-depth.
  (We take $\infty$ be both even and odd.)
\end{lem}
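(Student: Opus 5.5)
The plan is to prove the two depth statements directly by a maximal-paw extension argument, and then get the co-depth statements by negation. Since the co-depth of $\Gamma$ is the depth of $\neg\Gamma$, and negation exchanges connected with co-connected, it suffices to prove: a connected cograph with at least two vertices has even depth, and a co-connected one has odd depth. The converses then follow from the previous lemma (every cograph is connected or co-connected), provided we also know that a cograph with $\geq 2$ vertices cannot be both. That exclusivity is a genuine input, and I expect it to be the main obstacle.

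\emph{Exclusivity.} Suppose $\Gamma$ is a cograph with $\geq 2$ vertices that is both connected and co-connected; we derive a contradiction by exhibiting an induced $P_4$. Pick a vertex $v$ and consider the induced subgraph $\Gamma - v$, which is a cograph with $\geq 1$ vertex. We first ensure $\Gamma - v$ is not connected:
\begin{itemize}
  \item If $\Gamma - v$ has a single vertex, it is not connected in the sense we need only if it has no loop; loops play no role in $P_4$, so we work with $\Gamma_{\textit{irr}}$ throughout.
  \item Otherwise, by the previous lemma $\Gamma - v$ is connected or co-connected; if it is connected we replace $\Gamma$ by $\neg\Gamma$, which is again connected and co-connected, turns $\Gamma - v$ into a non-connected graph, and preserves $P_4$-freeness.
\end{itemize}
So assume $\Gamma - v$ is disconnected. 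Since $\Gamma$ is connected, $v$ has a neighbour in every component of $\Gamma - v$. Since $\Gamma$ is co-connected, $v$ is not adjacent to every other vertex, so some component $A$ contains a non-neighbour of $v$. Along a path in $A$ from that non-neighbour to a neighbour of $v$, we find adjacent $a,b \in A$ with $a \not\sim v$ and $b \sim v$. Finally take a neighbour $c$ of $v$ in a different component. Then $a - b - v - c$ is an induced $P_4$, a contradiction. (Reducing the one-vertex case cleanly is the only fiddly point.)

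\emph{Connected implies even depth.} Let $\Gamma$ be connected with $\geq 2$ vertices and depth $d$. If $d=\infty$ there is nothing to prove. Note $d\geq 2$, because a connected graph with two vertices has an edge between distinct vertices. Suppose $d$ is odd and choose a paw $x_1,\dots,x_d$. Since $d$ is odd, $x_d$ is adjacent to no earlier $x_i$, so each $x_i$ is co-connected to $x_d$. Hence all $x_i$ lie in one co-connected component $C$. By exclusivity $\Gamma$ is not co-connected, so there is a vertex $w \notin C$. Such a $w$ is adjacent to every vertex of $C$, since a non-edge would put $w$ in $C$. Then $x_1,\dots,x_d,w$ is a paw of depth $d+1$ (as $d+1$ is even, we need edges from $w$ to all earlier vertices). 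This contradicts the maximality of $d$.

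\emph{Co-connected implies odd depth.} This case is dual. Let $\Gamma$ be co-connected with $\geq 2$ vertices and depth $d$; then $d \geq 1$. If $d$ is even and finite, take a paw $x_1,\dots,x_d$. Now $x_d$ is adjacent to all earlier $x_i$, so all the $x_i$ lie in a single connected component. Since $\Gamma$ is not connected, there is a vertex $w$ in another component, and $w$ is non-adjacent to every $x_i$. Then $x_1,\dots,x_d,w$ is a paw of depth $d+1$, a contradiction.

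Combining the two cases with the previous lemma and exclusivity gives the "iff" statements for depth. Negation then gives the co-depth statements.
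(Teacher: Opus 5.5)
Your two paw-extension arguments are correct. However, both of them rest on \emph{exclusivity}, and your proof of exclusivity is circular at the reduction step. When $\Gamma - v$ is connected, you pass to $\neg\Gamma$ and assert that this makes $\Gamma - v$ non-connected. But $\neg(\Gamma - v)$ is non-connected only if $\Gamma - v$ was \emph{not} co-connected, and that is exactly exclusivity for the smaller cograph $\Gamma - v$. If $\Gamma - v$ were both connected and co-connected, negation would leave it so, and your construction of $a - b - v - c$ would have no second component from which to take $c$.

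The fix is a strong induction on $|V|$:
\begin{itemize}
\item The base case $|V| = 2$ is immediate: two vertices are connected iff adjacent and co-connected iff not adjacent. This is also the right way to handle your one-vertex case. A single vertex is connected whether or not it carries a loop, so the reduction to a disconnected $\Gamma - v$ is simply unavailable there.
\item For $|V| \geq 3$, the inductive hypothesis applied to $\Gamma - v$, together with the previous lemma, justifies the negation trick. Your induced $P_4$ is then correct.
\end{itemize}
This makes your route inherently finite. For infinite cographs exclusivity is false: the graph on $\{1,2,3,\dots\}$ in which $i<j$ are adjacent iff $j$ is even is both connected and co-connected.

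The paper never needs exclusivity. Instead of finding a vertex outside the co-connected component containing the paw, it uses the observation made earlier in the same subsection: two connected vertices of a cograph have a common neighbour in $\Gamma_{\textit{refl}}$. Take a paw $x_1,\dots,x_d$ of odd depth $d \geq 3$ in a connected $\Gamma$ (the case $d=1$ is trivial), and let $w$ be a common neighbour of $x_{d-1}$ and $x_d$. For each $i \leq d-2$, $P_4$-freeness applied to $(x_d, w, x_{d-1}, x_i)$ forces $w$ to be adjacent to $x_i$, so $w$ extends the paw. The co-connected case is dual.

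This argument is local, so the forward implications hold for arbitrary cographs. Exclusivity for finite cographs then comes out as a corollary (the paper's next lemma) rather than going in as an input. The converses also need no exclusivity, contrary to your plan. A cograph of finite even depth that is not connected is co-connected by the previous lemma, hence of odd depth, which is a contradiction.
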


\begin{proof}
  Let $\Gamma$ be connected, and
  let $x \colon \paw_{2n+1} \inclusion \Gamma$ be a paw of odd depth.
  Connectedness supplies us with a vertex $x_{2n+2}$ such that
  both $(x_{2n},x_{2n+2})$ and $(x_{2n+1},x_{2n+2})$ are edges of $\Gamma_{\textit{refl}}$.
  For $i \leq 2n-1$, we now encounter a $P_4$ as $(x_{2n+1},x_{2n+2},x_{2n},x_i)$, which ensures that each $(x_i,x_{2n})$ is an edge.
  This extends $x$ to a paw of depth $2n+2$.

  Let $\Gamma$ be co-connected, and
  let $x \colon \paw_{2n} \inclusion \Gamma$ be a paw of even depth.
  Co-connectedness supplies us with a vertex $x_{2n+1}$ such that
  neither $(x_{2n-1},x_{2n+1})$ nor $(x_{2n},x_{2n+1})$ is an edge of $\Gamma_{\textit{irr}}$.
  There are also no edges $(x_i,x_{2n+1})$ for $i \leq 2n-2$:
  if there were, we'd encounter a $P_4$ as $(x_{2n-1},x_{2n},x_i,x_{2n+1})$.
  this extends $x$ to a paw of depth $2n+1$.

  Duality supplies the remaining statements.
\end{proof}

Let $\angs{\lambda}$ and $\angs{\lambda'}$ be nonempty cographs of depths $d$ and $d'$ and co-depths $c$ and $c'$, respectively.
If $d>d'$, then the depth of $\angs{\lambda \leftsum \lambda'}$ is $d+1$ if $\angs{\lambda}$ is connected and $d$ otherwise.
If $c>c'$, then the co-depth of $\angs{\lambda \leftsum \lambda'}$ is $c+1$ if $\angs{\lambda}$ is co-connected and $c$ otherwise.

\begin{lem}
  Any finite cograph that is both connected and co-connected is a singleton.
\end{lem}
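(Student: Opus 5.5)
The plan is to use the parity lemma just proved, together with finiteness to rule out infinite depth.

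Let $\Gamma$ be a finite cograph that is both connected and co-connected, and suppose for contradiction that it has at least two vertices. By the preceding lemma, connectedness forces the depth of $\Gamma$ to be even, and co-connectedness forces it to be odd. The only value that is both even and odd is $\infty$ (per the stated convention), so $\Gamma$ must have infinite depth.

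Finiteness rules this out. A paw of depth $k$ is an induced embedding $\paw_k \inclusion \Gamma_{\textit{irr}}$ whose vertices are pairwise distinct, as the paper notes. Hence every such paw satisfies $k \leq \lvert V \rvert$, and the depth of $\Gamma$ is at most $\lvert V\rvert < \infty$. This is a contradiction, so $\Gamma$ has at most one vertex.

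The empty graph also needs care. If "connected" means $\ul{\sigma}\Gamma$ is a singleton, then the empty graph has empty $\ul{\sigma}$ and is not connected, so it is excluded automatically. Therefore $\Gamma$ is a singleton.

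The only delicate point is the one already handled: the parity lemma permits $\infty$ to be both even and odd, so finiteness is genuinely needed to close the argument. Everything else follows directly from the preceding lemma.
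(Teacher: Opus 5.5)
Your proof is correct and follows the paper's argument: assuming at least two vertices, the preceding lemma forces the depth to be both even and odd. You also make explicit two points the paper leaves tacit. First, finiteness bounds the depth by $|V|$; this is genuinely needed, since $\infty$ counts as both even and odd, and the infinite paw on $\{1,2,3,\dots\}$ is a cograph that is both connected and co-connected. Second, the empty cograph is not connected.
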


\begin{proof}
  Assume that $\Gamma$ is a cograph that is both connected and co-connected,
  and assume that $\Gamma$ has at least two distinct vertices.
  Then the depth of $\Gamma$ is both even and odd.
\end{proof}

Let $\GG_{\leq k} \subset \GG$ be the full subcategory of cographs of depth $\leq k$, and
let $\GG^{\leq k} \subset \GG$ be the full subcategory of cographs of co-depth $\leq k$.
We thus obtain two filtrations $\GG_{\leq \bullet}$ and $\GG^{\leq \bullet}$, which 
are interleaved:%
\footnote{This interleaving gives the filtration a \enquote{striped} quality that seems to be characteristic of twofold algebraic structures.}
\[
  \GG_{\leq (k-1)} \subset \GG^{\leq k} \subset \GG_{\leq (k+1)} \period
\]

\begin{prp}
  The filtrations $\GG_{\leq \bullet}$ and $\GG^{\leq \bullet}$ are exhaustive.
  In other words, $\GG$ is generated by singletons under $\rightsum$ and $\leftsum$ --
  or by $\angs{\ul{1}}$ under $\neg$ and either one of these symmetric monoidal structures.
\end{prp}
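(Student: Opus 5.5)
Since $\GG$ is the category of \emph{finite} cographs, exhaustiveness of $\GG_{\leq\bullet}$ amounts to showing every finite cograph has finite depth; by the interleaving $\GG_{\leq (k-1)} \subset \GG^{\leq k} \subset \GG_{\leq (k+1)}$, the two filtrations are then exhaustive simultaneously. Finite depth is immediate, since the vertices of a paw are pairwise distinct, so a paw of depth $k$ in $\Gamma$ forces $k \leq |V\Gamma|$. The real content is the generation statement, which I will prove by strong induction on the number of vertices.

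Let $\Gamma$ be a finite cograph. If $\Gamma$ is empty it is the common unit of $\leftsum$ and $\rightsum$. If it is a singleton it is $\angs{\ul{1}}$ or $\angs{\ol{1}}$, according to whether it has a loop. Otherwise, by the lemma that a finite cograph which is both connected and co-connected is a singleton, $\Gamma$ fails to be connected or fails to be co-connected.

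If $\Gamma$ is not connected, pick a connected component $C$ and let $D$ be the remaining vertices. No edge joins $C$ to $D$, and each of $C$ and $D$ is an induced subgraph, hence a cograph by heredity. So $\Gamma = C \leftsum D$, because $\leftsum$ imposes exactly the smallest relation making the summands induced subgraphs. Both pieces have fewer vertices, so induction applies. If $\Gamma$ is not co-connected, the same argument with a co-connected component gives $\Gamma = C \rightsum D$: every pair across $C$ and $D$ is an edge, which is precisely the largest relation restricting correctly. Both cases rely only on the definitions of $\ul{\sigma}$, $\ol{\sigma}$ and the two sums; the earlier lemma is what guarantees that one of the cases always applies.

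For the second form of the generation statement, use $\neg\angs{\lambda \rightsum \mu} = \neg\angs{\lambda} \leftsum \neg\angs{\mu}$ to rewrite $\rightsum$ as $\neg(\neg - \leftsum \neg -)$, and $\angs{\ol{1}} = \neg\angs{\ul{1}}$; the case of $\leftsum$ is symmetric. I expect no serious obstacle. The only points needing care are the reflexive-loop conventions in the definitions of connected and co-connected components, and checking that removing a whole component leaves an induced subgraph on which the sum decomposition is exact.
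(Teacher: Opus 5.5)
Your proof is correct, but it is organized differently from the paper's.

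\textbf{The paper's route.} The paper builds the \emph{canonical sum representation}. A connected cograph is written as the $\rightsum$ of \emph{all} its co-connected components, and a co-connected one as the $\leftsum$ of all its connected components. This is iterated. Termination rests on the claim, asserted rather than proved in detail, that passing to a co-connected component of a connected non-singleton cograph strictly lowers depth, and dually for co-depth. The depth and co-depth of the original cograph are then read off from the nesting length.

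\textbf{Your route.} You separate the two assertions.
\begin{itemize}
\item Exhaustiveness is immediate for finite cographs, because the vertices of a paw are pairwise distinct.
\item Generation follows by strong induction on the number of vertices, peeling off one component at a time as a binary sum. This uses only heredity and the lemma that a finite cograph with at least two vertices is not both connected and co-connected.
\end{itemize}

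\textbf{Comparison.} Your argument is more elementary and self-contained, and it avoids the depth-drop claim entirely. The paper's argument gives more. It produces a normal form for cographs, together with the bookkeeping that ties the nesting depth of sums to the filtrations $\GG_{\leq\bullet}$ and $\GG^{\leq\bullet}$. That bookkeeping is what the subsequent remarks rely on, e.g.\ that $\DD_{\leq k+1}$ is the closure of $\DD_{\leq k}$ under a single sum, and the definition of indexed sums by induction on depth. Grouping all components at once in your inductive step would recover the canonical form. However, extracting those depth statements would still require the depth comparison.
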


\begin{proof}
  We show that every finite cograph $\angs{\lambda}$ can be written in \emph{canonical form}.
  We regard each connected component $\angs{\mu} \in \ul{\sigma}\angs{\lambda}$ or co-connected component $\angs{\mu} \in \ol{\sigma}\angs{\lambda}$ as an induced subgraph $\angs{\mu} \subseteq \angs{\lambda}$.
  So for every $k$ we obtain a formula
  \[
    \angs{\lambda} =
      \bigrightsum_{\angs{\lambda_1} \in \ol{\sigma}\angs{\lambda}}
      \bigleftsum_{\angs{\lambda_2} \in \ul{\sigma}\angs{\lambda_1}}
      \bigrightsum_{\angs{\lambda_3} \in \ol{\sigma}\angs{\lambda_2}} \cdots
      \bigoplus_{\angs{\lambda_k} \in \ul{\sigma}\angs{\lambda_{k-1}}} \angs{\lambda_k}
  \]
  for $\angs{\lambda}$ connected, and
  \[
    \angs{\lambda} =
    \bigleftsum_{\angs{\lambda_1} \in \ul{\sigma}\angs{\lambda}}
    \bigrightsum_{\angs{\lambda_2} \in \ol{\sigma}\angs{\lambda_1}} 
    \bigleftsum_{\angs{\lambda_3} \in \ul{\sigma}\angs{\lambda_2}} \cdots
    \bigoplus_{\angs{\lambda_k} \in \ol{\sigma}\angs{\lambda_{k-1}}} \angs{\lambda_k}
  \]
  for $\angs{\lambda}$ co-connected.
  If $\angs{\lambda}$ is connected, then unless it is a singleton, the depth of any co-connected compenent is strictly smaller;
  dually, if $\angs{\lambda}$ is co-connected, then unless it is a singleton, the co-depth of any connected component is strictly smaller.
  Hence we may let $k \geq 1$ be the smallest integer such that
  the subgraphs $\angs{\lambda_k}$ are all connected and co-connected,
  hence singletons.
  (If $\angs{\lambda_{k-1}}$ is connected, then $\angs{\lambda}$ is of co-depth $k$ and depth $k+1$;
  if $\angs{\lambda_{k-1}}$ is co-connected, then $\angs{\lambda}$ is of depth $k$ and co-depth $k+1$.)
  This provides the desired formula.
\end{proof}

We call the resulting representation the \emph{canonical sum representation of $\angs{\lambda}$}.

We may pull $\GG_{\leq \bullet}$ and $\GG^{\leq \bullet}$ back to $\DD$ and $\EE$
to obtain filtrations $\DD_{\leq \bullet}$ and $\EE^{\leq \bullet}$.
Consequently:
\begin{itemize}
  \item $\DD_{\leq 1}$ is the copy of $\FF$ in $\DD$ that consists of trivial cographs $\angs{\ul{n}}$;
  \item $\EE^{\leq 1}$ is the copy of $\FF$ in $\EE$ that consists of complete cographs $\angs{\ol{n}}$;
  \item $\DD_{\leq 2}$ is the category of finite apartness relations from \cref{sub:categories}; and
  \item $\EE^{\leq 2}$ is the category of finite equivalence relations from \cref{sub:categories}.
\end{itemize}
If $k$ is even (respectively, odd), then $\DD_{\leq k+1}$ is the closure of $\DD_{\leq k}$ under $\leftsum$ (resp., under $\rightsum$).
If $k$ is even (respectively, odd), then $\EE^{\leq k+1}$ is the closure of $\EE^{\leq k}$ under $\rightsum$ (resp., under $\leftsum$).

\subsection{Dispersive \& accretive}%
\label{sub:dispersiveaccretive}
Now that we have classified the objects of the category $\GG$ of cographs,
it makes sense to study the morphisms of the category.
To begin, let's identify a factorization system.

Our construction is general.
Let $u \colon \AA \to \FF$ be a fibered category over $\FF$.
(Recall that this means that $u$ is a cartesian fibration.)
Call a map of $\AA$ \emph{dispersive} iff it is $u$-inverted -- \emph{i.e.}, is carried to an equivalence under $u$.
Call a map of $\AA$ \emph{accretive} iff it is $u$-cartesian.
Dispersive and accretive maps form wide subcategories $\AA^{\disp}, \AA_{\accr} \subseteq \AA$.
They comprise an orthogonal factorization system on $\AA$, in which
every map $\phi$ can be written uniquely as $\phi = \alpha \delta$, where $\alpha$ is accretive and $\delta$ is dispersive.

We apply this to our fibered category $\GG$ of cographs.
If $\angs{\lambda} = (V,E)$ and $\angs{\lambda'} = (V,E')$ are two cographs with the same set $V$ of vertices, and if $E \subseteq E'$, then the identity is dispersive as a map $\angs{\lambda} \to \angs{\lambda'}$.
Up to isomorphism, these are the only dispersive maps.
The trivial cographs are initial among dispersive maps;
the complete cographs are terminal among dispersive maps.

If $\angs{\lambda} = (V,E)$ and $\angs{\mu} = (W,F)$ are cographs, then a map $f \colon \angs{\lambda} \to \angs{\mu}$ is accretive iff $E = f^{-1}(F)$.
In other words, $f$ is accretive when $(x,y) \in E$ iff $(f(x),f(y)) \in F$.

An accretive injection in $\GG$ is the inclusion of an \emph{induced subgraph}.
The dispersive/accretive factorization system tilts to a factorization system in which every map $\phi$ can be written as $\phi = \iota \sigma$, where $\iota$ is an accretive injection and $\sigma$ is a surjection.

A \emph{dispersion/accretion square} in a fibered category $\AA \to \FF$ is a commutative square
\[
  \begin{tikzcd}[sep=1.5em, ampersand replacement=\&]
    I' \arrow[r, "f'"] \arrow[d,"\psi"'] \& J' \arrow[d, "\phi"] \\
    I \arrow[r, "f"']  \& J 
  \end{tikzcd}
  \comma
\]
in which $\phi$ and $\psi$ are dispersive, and $f$ and $f'$ are accretive.
Any such square is automatically a pullback in $\AA$.

The simplest nontrivial example in $\GG$ is the dispersion/accretion square
\[
  \begin{tikzcd}[sep=1.5em, ampersand replacement=\&]
    \angs{\ul{3}} \arrow[r, "f'"] \arrow[d,"\psi"'] \& \angs{\ul{2}} \arrow[d, "\phi"] \\
    \angs{\ul{2} \rightsum \ul{1}}  \arrow[r, "f"']  \& \angs{\ul{1} \rightsum \ul{1}} 
  \end{tikzcd}
  \period
\]
Dispersion/accretion squares in $\GG$ in which the accretive maps are each surjections are automatically pushouts.

If $f \colon \angs{\lambda} \to \angs{\mu}$ is an accretive map of cographs, then the same map on vertices defines a map $\neg \angs{\lambda} \to \neg \angs{\mu}$ on the negations.
On the other hand, if $f \colon \angs{\lambda} \to \angs{\lambda'}$ is a dispersive map, then the inverse of the set map defines a map $\neg \angs{\lambda'} \to \neg \angs{\lambda}$.
This suggests that the functoriality of $\neg$ mixes a covariant and contravariant functoriality.
To make this precise, we need to recall some basic facts about span categories.

\subsection{Spans}%
\label{sub:spans}
Assume that $\CC$ is a category equipped with two wide subcategories $\CC_{\dagger}, \CC^{\dagger} \subset \CC$, whose morphisms we call \emph{ingressive} and \emph{egressive}, respectively.
Let us assume that pullbacks of ingressive maps along egressive maps exist and are ingressive, and dually that pullbacks of egressive maps along ingressive maps exist and are egressive.
The resulting pullback squares we call \emph{ambigressive}.

Then we may form the \emph{span} category $\Span(\CC;\CC_{\dagger},\CC^{\dagger})$ or simply $\Span(\CC)$ for brevity, in which the objects are the objects of $\CC$, and a morphism from $X \in \CC$ to $Y \in \CC$ is a span
\[
  X \ot U \to Y
\]
in which the backward map $U \to X$ is egressive and the forward map $U \to Y$ is ingressive.
Composition is done by forming ambigressive pullback squares.
Elsewhere \cite{MR3558219}, we have called this the \emph{effective Burnside category} $\AA^{\eff}(\CC,\CC_{\dagger},\CC^{\dagger})$, and in other sources, it is called the category of \emph{correspondences}.

The formation of the span category is right adjoint to the formation of the twisted arrow category.
More precisely, for any category $A$, 
we consider the right fibration
\[
  (s,t) \colon \TwArr(A) \to A \times A^{\op}
\]
that corresponds to the functor $\Map \colon A^{\op} \times A \to \An$.
Declare a map of $\TwArr(A)$ ingressive if it is $t$-cartesian or, equivalently, $s$-inverted.
Dually, declare a map $\TwArr(A)$ egressive if it is $s$-cartesian or, equivalently, $t$-inverted.
With these definitions, ambigressive pullbacks exist in the triple
\[
  \TwArr(A) = (\TwArr(A), \TwArr(A)_{\dagger}, \TwArr(A)^{\dagger}) \comma
\]
and the category $\Span(\TwArr(A))$ is the untwisted arrow category $\Arr(A)$.
If $\CC$ is a triple as above, then let
\[
  \Fun^{\trip} (\TwArr(A), \CC)
\]
denote the category of functors $\TwArr(A) \to \CC$ that preserve ambigressive squares.
The basic result now is that
\[
  \Fun^{\trip} (\TwArr(A), \CC) = \Fun(A, \Span(\CC)) \period
\]
The unit $A \to \Span(\TwArr(A)) = \Arr(A)$ is the assignment $a \mapsto \id_a$.

Here's a particular example of the span construction in action.
Let $\AA \to \FF$ be a fibered category.
The \emph{vertical opposite} of $\AA$ is the category
\[
  \AA^{\vop} = \Span(\AA;\AA_{\accr},\AA^{\disp}) \comma
\]
which is a category fibered over $\FF$,
whose fiber over $\angs{n}$ is the opposite of the fiber $\AA_n$.
(This is a general trick \cite{MR3746613}.)
With this notation in hand, we may now return to the duality provided by negation.

\begin{prp}
  \label{sub:duality}
  The operation $\angs{\lambda} \mapsto \neg \angs{\lambda}$ is an equivalence $\GG^{\vop} \equivalence \GG$ over $\FF$.
  This is an equivalence of twofold symmetric monoidal categories $(\GG^{\vop},\rightsum,\leftsum) \to (\GG,\leftsum,\rightsum)$.
  For every $k \geq 0$ (or $k = \infty$), this equivalence restricts to equivalences $\GG_{\leq k}^{\vop} \equivalence \GG^{\leq k}$ and $\DD_{\leq k}^{\vop} \equivalence \EE^{\leq k}$.
\end{prp}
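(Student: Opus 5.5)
Since $\GG$ is a $1$-category and $\GG \to \FF$ a cartesian fibration of $1$-categories, $\GG^{\vop} = \Span(\GG;\GG_{\accr},\GG^{\disp})$ is again a $1$-category, whose morphisms $\angs{\lambda} \to \angs{\mu}$ are spans $\angs{\lambda} \ot \angs{\lambda'} \to \angs{\mu}$ with the backward map dispersive and the forward map accretive. Up to isomorphism, such a span is a set map $f \colon V\angs{\lambda} \to V\angs{\mu}$ together with a relation $E' \supseteq E\angs{\lambda}$ on $V\angs{\lambda}$ satisfying $E' = f^{-1}(F)$. Hence it is exactly a map $f$ with $E\angs{\lambda} \subseteq f^{-1}(F)$, i.e.\ $f$ is relation-preserving as a map $\angs{\lambda} \to \angs{\mu}$. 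So $\GG^{\vop}$ is isomorphic to $\GG$ itself, although composition in the span category is realized through pullbacks.

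I would define the functor $\neg$ directly on spans. For a span $\angs{\lambda} \ot \angs{\lambda'} \to \angs{\mu}$, send the accretive leg $f$ to the same vertex map $\neg\angs{\lambda'} \to \neg\angs{\mu}$, which is still accretive since $\neg E' = f^{-1}(\neg F)$. Send the dispersive leg to the identity $\neg\angs{\lambda} \to \neg\angs{\lambda'}$, which is a map in $\GG$ because $\neg E\angs{\lambda} \supseteq \neg E'$. The composite is a map $\neg\angs{\lambda} \to \neg\angs{\mu}$ in $\GG$ lying over $f$. The cleanest way to see functoriality and equivalence is to invoke the general fact that the vertical opposite of a cartesian fibration is fibered over $\FF$ with fibers the opposites of the fibers and the same cartesian edges. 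The data needed is a map over $\FF$ that is fiberwise an equivalence $(\GG_n)^{\op} \simeq \GG_n$ and preserves cartesian edges. On the fiber over $\angs{n}$, which is the poset of cograph relations on $\angs{n}$ ordered by inclusion, negation is an order-reversing bijection; it preserves cographs by the earlier remark that $\neg$ of a cograph is a cograph. Negation also preserves accretive (cartesian) maps, because $f^{-1}$ commutes with complement. Functoriality on spans then reduces to checking that ambigressive pullback squares, which are dispersion/accretion squares, go to commutative squares. This holds because every map involved lies over the same underlying set maps and $\GG \to \FF$ is faithful. A map of cartesian fibrations that is a fiberwise equivalence is an equivalence, so $\neg$ is an equivalence.

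For the twofold monoidal statement, the underlying isomorphisms $\neg\angs{\lambda \rightsum \mu} = \neg\angs{\lambda} \leftsum \neg\angs{\mu}$ were already noted. They are identities on vertex sets, so naturality and the symmetry, associativity and unit coherences all hold on underlying sets, and faithfulness over $\FF$ makes them automatic. The point needing care is the twofold structure on $\GG^{\vop}$. Since $\rightsum$ and $\leftsum$ are fiberwise-monotone and preserve accretive maps, they induce symmetric monoidal structures on the span category. The intertwiner on $\GG^{\vop}$ runs opposite to the one on $\GG$: it is represented by the span whose backward leg is the dispersive identity. Thus the roles of the two sums swap, and $(\GG^{\vop},\rightsum,\leftsum)$ has $\rightsum$ in the \enquote{left} slot. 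One then checks that $\neg$ carries this reversed intertwiner to the intertwiner of $(\GG,\leftsum,\rightsum)$; again this is a statement about identity vertex maps. I expect this step to be the main obstacle, mostly as bookkeeping about which sum plays which role. That bookkeeping is only meaningful relative to the definitions of \cref{sec:twofold}, but in this $1$-categorical, faithful-over-$\FF$ setting it reduces to strict identities.

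For the restrictions, co-depth of $\Gamma$ is by definition the depth of $\neg\Gamma$, so $\neg$ carries the objects of $\GG_{\leq k}$ bijectively onto those of $\GG^{\leq k}$. The subcategories are full and $\vop$ is compatible with passing to full subcategories closed under the relevant pullbacks. Here those pullbacks are dispersion/accretion squares over a fixed cograph, and they stay in the subcategory because restricting along an accretive map yields induced subgraphs, which have no greater depth. Finally, $\neg$ exchanges irreflexive and reflexive cographs, which gives $\DD_{\leq k}^{\vop} \simeq \EE^{\leq k}$, including the case $k = \infty$.
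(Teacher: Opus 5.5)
\textbf{First paragraph: the key inclusion is backwards.} In a span $\angs{\lambda} \ot \angs{\lambda'} \to \angs{\mu}$ representing a morphism of $\GG^{\vop}$, the backward leg $\angs{\lambda'} \to \angs{\lambda}$ is dispersive. This means the identity of $V\angs{\lambda}$ is relation-preserving from $E'$ to $E\angs{\lambda}$, so $E' \subseteq E\angs{\lambda}$, not $E' \supseteq E\angs{\lambda}$. A morphism of $\GG^{\vop}$ is therefore a set map $f$ with $f^{-1}(F) \subseteq E\angs{\lambda}$, i.e.\ one that \emph{reflects} edges.

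Your conclusion that $\GG^{\vop}$ is identified with $\GG$ by the identity is false, for two reasons:
\begin{itemize}
\item It contradicts the fact you invoke a few lines later, that the fiber of $\GG^{\vop}$ over $\angs{n}$ is $\GG_n^{\op}$.
\item Combined with the proposition, it would make $\neg$ an endofunctor of $\GG$, which the paper says it is not. The dispersive identity $\angs{\ul{2}} \to \angs{\ul{1} \rightsum \ul{1}}$ would have to go to the identity $\angs{\ol{2}} \to \angs{\ol{1} \leftsum \ol{1}}$, which is not relation-preserving.
\end{itemize}
The same slip recurs when you say the identity $\neg\angs{\lambda} \to \neg\angs{\lambda'}$ is a map \enquote{because $\neg E\angs{\lambda} \supseteq \neg E'$}. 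That inclusion is the condition for a map in the other direction.

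Once corrected, the first assertion is immediate and needs no fibration theory. We have $f^{-1}(F) \subseteq E\angs{\lambda}$ iff $\neg E\angs{\lambda} \subseteq f^{-1}(\neg F)$, i.e.\ iff the same vertex map is a morphism $\neg\angs{\lambda} \to \neg\angs{\mu}$ of $\GG$. Both categories are faithful over $\FF$, with composition given by composing vertex maps, so $\neg$ is an isomorphism of categories over $\FF$. Your treatment of the intertwiner is correct: the dispersive $\GG$-intertwiner becomes a $\GG^{\vop}$-morphism in the reverse direction, so the two sums swap roles. But that argument already uses the corrected orientation, not the one in your first paragraph.

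\textbf{Last paragraph: the restriction argument is also wrong.} You say ambigressive pullbacks stay in the subcategory \enquote{because restricting along an accretive map yields induced subgraphs}. Accretive maps need not be injective, and pulling back along a non-injective one replaces vertices by classes of twins; that is not passing to an induced subgraph. For $\GG_{\leq k}$ the claimed closure actually fails:
\begin{itemize}
\item Take $Y = \angs{\ol{1} \rightsum \ul{1}}$.
\item Take the accretive map $\angs{\ol{2} \rightsum \ul{1}} \to Y$ collapsing the looped pair.
\item Take the dispersive map $\angs{\ol{1} \leftsum \ul{1}} \to Y$.
\end{itemize}
These three cographs have depths $2$, $2$, $1$. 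The pullback is $\angs{\ol{2} \leftsum \ul{1}}$, whose irreflexive part is $\paw_3$, so it has depth $3$. For $\DD_{\leq k}$ the closure does hold, but by a different argument: two loopless twins can never both occur in an induced paw.

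None of this is needed. Read $\GG_{\leq k}^{\vop}$ as the full subcategory of $\GG^{\vop}$ on cographs of depth $\leq k$. Since $\neg$ is an isomorphism $\GG^{\vop} \cong \GG$, and the co-depth of $\neg\Gamma$ is the depth of $\Gamma$, it restricts to an isomorphism onto the full subcategory $\GG^{\leq k}$. Intersecting with irreflexivity, which $\neg$ exchanges for reflexivity, gives $\DD_{\leq k}^{\vop} \simeq \EE^{\leq k}$.
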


\subsection{Comments \& questions}%
\label{sub:comments-combinatorics}
Cographs form an exceptional class of graphs.
They can be characterized in many different ways, some of which we have witnessed.
Here's another, which we found striking:
an irreflexive graph is a cograph iff, within every induced subgraph, every maximal clique intersects every maximal coclique (necessarily in a single vertex).

The category $\DD_{\leq 2}$ appears in Richard Borcherds' work on quantum vertex algebras \cite{MR1865087} in a capacity very similar to its role here.
Borcherds uses it to give a very clean (he even says \enquote{trivial}) categorical definition of vertex algebras using the two induced symmetric monoidal structures on $\Fun(\DD_{\leq 2},\VV)$.
In retrospect, our framework for defining factorization algebras seems to be a sort of categorification of Borcherds' theory.
Emily Cliff \cite{MR4426317} used Borcherds' categorical tools to turn vertex algebra information into factorization algebra information and analyze the lossiness of this process.
It would be interesting to understand how her work and the present work relate.

We have briefly alluded to the theory of \emph{twofold symmetric monoidal categories} $C$, in which one has a pair of tensor products, $\lefttensor$ and $\righttensor$, a common unit, and an intertwiner
\[
  (U \righttensor V) \lefttensor (X \righttensor Y) \to (U \lefttensor X) \righttensor (V \lefttensor Y) \period
\]
These structures play a significant role in this paper.
In effect, a twofold symmetric monoidal category is a commutative monoid in the $2$-category $\categ{Mon}^{(\infty),\textit{oplax}}\categ{Cat}$ of symmetric monoidal categories and normal oplax symmetric monoidal functors.%
\footnote{The word \emph{normal} in this context means that the image of the unit is a unit.}
Or equivalently it is a commutative monoid in the $2$-category $\categ{Mon}^{(\infty),\textit{lax}}\categ{Cat}$ of symmetric monoidal categories and normal lax symmetric monoidal functors.

In \cref{sec:twofold} we develop the elements of such a theory for using the category $\EE$, but
our framework is not completely satisfactory for reasons we discuss in \cref{sub:comments-twofold}.

In any case, the combinatorics of cographs plays a central role in our theory of isolability objects,
to which we now turn.


\section{Isolability structures}%
\label{sec:isolability}
In this section, we identify a structure that permits one to make sense of factorization algebras over a given geometric object.
The goal is to do so in terms that are independent of any particular geometric context (such as smooth manifolds, complex manifolds, supermanifolds, varieties, \etc.)
The structure we define -- what we call an \emph{isolability structure}%
\footnote{Author's note:
  At times I have also called these structures \emph{world structures}, but
  this terminology overplays a particular physical interpretation.
  In many cases, the isolability structure one wants lies not on the \enquote{worldvolume object} itself,
  but rather on an auxiliary \emph{observer stack} (\cref{sub:observer}) of objects over the worldvolume.}
-- allows us to make sense of a definition of factorization algebra on a space with coefficients in any reasonable category of sheaves.

\subsection{A basic example}%
\label{sub:basicexample}
Before the general definition, an example.
Let $X$ be a topological space.
(One should have in mind a topological manifold here;
in any case we take all our topological spaces to be compactly generated, k-ifying them when necessary.)
For each cograph $\angs{\lambda}$, let's say that a map $x \colon V\angs{\lambda} \to X$ is \emph{separating} if for every edge $(a,b) \in E\angs{\lambda}$, one has $x(a) \neq x(b)$.
In other words, a separating map is one that carries isolated vertices of $\angs{\lambda}$ to distinct points of $X$.
We write $X^{\lambda} \subset \Map(V \angs{\lambda}, X)$ for the subspace consisting of the separating maps.

For example, the subset $X^{\ul{m} \rightsum \ul{n}} \subset X^{m+n}$ consists of those $(x_1, \dots, x_{m+n})$ in which the sets $\{x_1, \dots, x_m\}$ and $\{x_{m+1}, \dots, x_{m+n}\}$ are disjoint.

If $\angs{\lambda}$ is not irreflexive, this definition is empty, but
we do obtain an interesting functor $X^{\bullet} \colon \DD^{\op} \to \Top$:
\begin{itemize}
  \item An accretive surjection $i \colon \angs{\mu} \to \angs{\lambda}$ is carried to the inclusion
    \[ \tensor*[_X]{i}{} \colon X^\lambda \inclusion X^\mu \]
    given by the equations $x(a) = x(b)$ if $i(a) = i(b)$.
    This is closed when $X$ is Hausdorff.
    Example: the accretive map $\angs{\ul{2}} \to \angs{\ul{1}}$ is carried to the diagonal
    \[
      X^{\ul{1}} = \Delta_X \inclusion X \times X = X^{\ul{2}} \period
    \]
  \item A dispersive map $j \colon \angs{\lambda'} \to \angs{\lambda}$ 
    is carried to the inclusion
    \[ \tensor*[_X]{j}{} \colon X^{\lambda} \inclusion X^{\lambda'} \]
    cut out by the inequalities $x(a) \neq x(b)$ for $(a, b) \in E\angs{\lambda}$.
    This is open when $X$ is Hausdorff.
    Example: the dispersive map $\angs{\ul{2}} \to \angs{\ul{1} \rightsum \ul{1}}$ is carried to the inclusion
    \[
      X^{\ul{1} \rightsum \ul{1}} = (X \times X) \smallsetminus \Delta_X \inclusion X \times X = X^{\ul{2}} \period
    \]
  \item An accretive injection $g \colon \angs{\mu} \to \angs{\lambda}$ is carried to the restriction 
    \[ \tensor*[_X]{g}{} \colon X^\lambda \fibration X^\mu \]
    of the projection $X^{V\angs{\lambda}} \fibration X^{V\angs{\mu}}$.
    When $X$ is a manifold, this is a fibration, sometimes called the \emph{Faddell--Neuwirth fibration}.
    Example: the two inclusions $\angs{\ul{1}} \inclusion \angs{\ul{1} \rightsum \ul{1}}$ are carried to the two projections
    \[
      X^{\ul{1} \rightsum \ul{1}} = (X \times X) \smallsetminus \Delta_X \to X = X^{\ul{1}} \period
    \]
\end{itemize}

The resulting diagram $X^{\bullet} \colon \DD^{\op} \to \Top$ is what we call an \emph{isolability topological space}.
It has a number of interesting features:
\begin{itemize}
  \item For every pushout square
    \[
      \begin{tikzcd}[sep=1.5em, ampersand replacement=\&]
        \angs{\lambda} \arrow[r, "i"] \arrow[d,"g"'] \& \angs{\mu} \arrow[d, "h"] \\
        \angs{\lambda'} \arrow[r, "i'"']  \& \angs{\mu'} 
      \end{tikzcd}
    \]
    in $\DD$ in which $i$ is surjective and $g$ is an accretive injection,
    the corresponding square
    \[
      \begin{tikzcd}[sep=1.5em, ampersand replacement=\&]
        X^{\mu'} \arrow[r] \arrow[d] \& X^{\lambda'} \arrow[d] \\
        X^{\mu} \arrow[r]  \& X^{\lambda} 
      \end{tikzcd}
    \]
    is a pullback.
    For example, $X^{\ul{1} \rightsum \ul{1}} = X^1 \times_{X^2} X^{\ul{1} \rightsum \ul{2}}$.
    This is \emph{regularity} (\cref{sub:regularity}).
  \item In the Hausdorff case, accretive surjections $i$ are carried to closed inclusions $\tensor*[_X]{i}{}$, and dispersive maps $j$ are carried to open immersions $\tensor*[_X]{j}{}$.
  \item Furthermore, the open immersion $\tensor*[_X]{j}{}$ induced by a dispersive map $j$ is the complement of the union of the closed embeddings $\tensor*[_X]{i}{}$ induced by the accretive surjections $i$ that are incompatible (in the sense of \cref{sub:categories}) with $j$.
  \item The diagram $X^{\bullet}$ carries the disconnected sum $\leftsum$ to products.
    This is \emph{additivity} (\cref{sub:additivity}).
    This condition is reasonable in a category like $\Top$,
    but for other categories of interest such as stratified spaces (\cref{sec:homotopical}), additivity is too restrictive.
  \item Our functor $X^{\bullet}$ is left Kan extended from the subcategory $\DD_{\leq 2)}$: 
    if $\angs{\lambda}$ and $\angs{\mu}$ are two irreflexive cographs, then
    the space $X^{\lambda \leftsum \mu} = X^\lambda \times X^\mu$ is covered by the spaces $X^{\nu}$ in which $\angs{\nu}$ is any finite isolation structure equipped with maps $\angs{\lambda} \to \angs{\nu}$ and $\angs{\mu} \to \angs{\nu}$.
    For example, we find that 
    \[
      X^{\paw_3} = X^{\ul{1}} \times X^{\ul{1} \rightsum \ul{1}} = X^{\ul{2} \rightsum \ul{1}} \cup^{X^{\ul{1} \rightsum \ul{1} \rightsum \ul{1}}} X^{\ul{1} \rightsum \ul{2}} \comma
    \]
    which expresses the co-transitivity of the relation $\neq$ on $X$.
    This is \emph{$2$-skeletality} (\cref{sub:skeletality}).
\end{itemize}

\subsection{Isolability objects}%
\label{sub:isolabilityobjects}
An \emph{isolability object} of a category $\XX$ is a functor $W^{\bullet} \colon \DD^{\op} \to \XX$.
The object $W^1$ is the \emph{underlying object} of $W^{\bullet}$, and $W^{\bullet}$ is an \emph{isolability structure} on $W^1$.
Isolability objects form the category $\Isol(\XX) \coloneq \Fun(\DD^{\op},\XX)$.

As we have seen (\cref{sub:basicexample}), if $X$ is a topological space, then $\angs{\lambda} \mapsto X^{\lambda}$ is an isolability topological space.
More generally, if $f \colon X \to Y$ is a continuous map of topological spaces, then we may define $X_f^{\lambda}$ as the set of maps $x \colon V\angs{\lambda} \to X$ such that $f(x)$ is separating.
This defines a functor $\TwArr(\Top) \to \Isol(\Top)$.
Equivalently (\cref{sub:spans}), this operation can be regarded as a functor $\Top \to \Span(\Isol(\Top))$ that carries a map $f \colon X \to Y$ to the span $X^{\bullet} \ot X_f^{\bullet} \to Y^{\bullet}$.

Similarly, the assignment $\angs{\lambda} \mapsto \Pi_{\infty}(X^{\lambda})$ is an additive isolability object in spaces.
As it turns out, however, this structure is too coarse for our purposes, and
we will need to incorporate stratifications for a theory of \emph{isolability spaces} that is suitable for the \emph{homotopical context} (\cref{sec:homotopical}).

\subsection{Regularity}%
\label{sub:regularity}

We say that an isolability object $X^{\bullet} \colon \DD^{\op} \to \XX$ is \emph{regular} iff
for every pushout square
\[
  \begin{tikzcd}[sep=1.5em, ampersand replacement=\&]
    \angs{\lambda} \arrow[r, "i"] \arrow[d,"g"'] \& \angs{\mu} \arrow[d, "h"] \\
    \angs{\lambda'} \arrow[r, "i'"']  \& \angs{\mu'} 
  \end{tikzcd}
\]
in $\DD$ in which $i$ is surjective and $g$ is an accretive injection,
the corresponding square
\[
  \begin{tikzcd}[sep=1.5em, ampersand replacement=\&]
    X^{\mu'} \arrow[r] \arrow[d] \& X^{\lambda'} \arrow[d] \\
    X^{\mu} \arrow[r]  \& X^{\lambda} 
  \end{tikzcd}
\]
is a pullback.

In a regular isolability object $X^{\bullet}$, the map $X^{\mu'} \to X^{\mu}$ induced by an accretive injection is pulled back from the map $X^{V\angs{\mu'}} \to X^{V\angs{\mu}}$.
If $f \colon X \to Y$ is a continuous map of topological spaces, then $X_f^{\bullet}$ is a regular isolability object. 

The majority of the examples of isolability objects we've encountered are regular,
so it appears to be quite a natural condition.
On the other hand, it doesn't appear to be necessary for a good theory of factorization algebras.

\subsection{Additivity}%
\label{sub:additivity}
An isolability object $W^{\bullet} \colon \DD^{\op} \to \XX$ is an \emph{additive} iff it carries disconnected sums (\ie, finite coproducts) to products.
Additive isolability objects can also be described simply as functors (with no conditions) on the category $\DD^{\op}_{\textit{conn}}$ of connected irreflexive cographs.

This condition is reasonable for isolability objects in topological spaces and varieties, but
it doesn't mix well with the presence of stratifications.
For example, in the homotopical context (\cref{sec:homotopical}), we will want to regard $W^2$ as stratified by the diagonal and its complement.

\subsection{Skeletality}%
\label{sub:skeletality}
Assume that the category $\XX$ has finite colimits, and let $k \geq 1$.
We may left Kan extend a functor $W^{\bullet} \colon \DD_{\leq k}^{\op} \to \XX$ to an isolability object. 
For an object $\angs{\lambda} \in \DD$, this extension is defined by the formula:
\[
  W^{\lambda} = \colim_{\angs{\mu} \in (\DD_{\leq k, \angs{\lambda}/})^{\op}} W^{\mu} \period
\]
Limit-cofinal in $\DD_{\leq k,\angs{\lambda}/}$ is the finite full subcategory consisting of dispersive maps $\angs{\lambda} \to \angs{\mu}$. 
This is why only finite colimits are needed for this extension to exist.
We call isolability objects constructed in this way \emph{$k$-skeletal}.

Thus if we define the category $\Isol_{\leq k}(\XX) \coloneq \Fun(\DD_{\leq k}^{\op}, \XX)$, then
left Kan extension identifies $\Isol_{\leq k}(\XX)$ with the full subcategory of $\Isol(\XX)$ consisting of $k$-skeletal isolability objects.
In effect, if $W^{\bullet}$ is $k$-skeletal, then $W^{\lambda}$ is the union of $W^{\mu}$ over those cographs $\angs{\mu} \supseteq \angs{\lambda}$ with the same vertices as $\angs{\lambda}$ such that $\angs{\mu}$ is $\paw_{k+1}$-free.

Every isolability object $X^{\bullet}$ thus has a \emph{skeletal filtration}
\[
  \sk_1 X^\bullet \to \sk_2 X^\bullet \to \cdots \to X^\bullet \period
\]
The $1$-skeleton is the isolability object
\[
  (\sk_1 X)^{\lambda} =
  \begin{cases}
    X^n         & \text{if } \angs{\lambda} = \angs{n} \comma \\
    \varnothing & \text{otherwise.}
  \end{cases}
\]
The $2$-skeleton is more complicated;
for example, we have the following cotransitivity formula:
\[
  (\sk_2 X)^{\paw_3} =
  X^{\ul{1} \rightsum \ul{2}} \cup^{X^{\ul{1} \rightsum \ul{1} \rightsum \ul{1}}} X^{\ul{2} \rightsum \ul{1}}
  \period
\]
The construction of \cref{sub:basicexample} carries a Hausdorff topological space $X$ to a $2$-skeletal isolability topological space $\angs{\lambda} \mapsto X^{\lambda}$. 

In the other direction, let us suppose that $\XX$ has all limits.
One can then \emph{right} Kan extend a functor $\DD_{\leq n}^{\op} \to \XX$ to an isolability object $\DD^{\op} \to \XX$.
Isolability objects arising this way are called \emph{$n$-coskeletal}, and
right Kan extension identifies $\Isol_{\leq n}(\XX)$ with the full subcategory of $\Isol(\XX)$ consisting of $n$-coskeletal isolability objects.
One thus defines a \emph{coskeletal tower}
\[
  X^\bullet \to \cdots \to \cosk_2(X)^\bullet \to \cosk_1(X)^\bullet \period
\]

The $1$-coskeleton is given by the formula
\[
  \cosk_1(X)^{\lambda} = X^n \comma
\]
where $\angs{n} = V\angs{\lambda}$.
A $1$-coskeletal isolability object is one that inverts dispersive maps.
In effect, every point of a $1$-coskeletal isolability object is isolated in exactly one way from every point -- including itself.

\subsection{Products}%
\label{sub:products}
The categorical product in the category $\Isol(\XX)$ is not the right one from our perspective:
morally, the points of $X^{\lambda} \times Y^{\lambda}$ are pairs $(x,y)$ of configurations of points in which, for every $(a,b) \in E\angs{\lambda}$, \emph{both} $x_a \neq x_b$ and $y_a \neq y_b$.
What we really want are pairs $(x,y)$ such that if $(a,b) \in E\angs{\lambda}$, then \emph{either} $x_a \neq x_b$ or $y_a \neq y_b$.
That is we want to define $(X \otimes Y)^{\lambda}$ as the union of the objects $X^{\mu_1} \times X^{\mu_2}$ in which $\angs{\mu_1}$ and $\angs{\mu_2}$ are cographs with the same vertices as $\angs{\lambda}$, in which every edge of $\angs{\lambda}$ lies in either $\angs{\mu_1}$ or $\angs{\mu_2}$.
So for our purposes, the correct product is a certain convolution.

To this end, let us define an \emph{anti-operad} structure on $\DD$
(\ie, an operad structure on $\DD^{\op}$ \cite[Df. 1.2]{MR3933393}).
For cographs $\angs{\lambda}, \angs{\mu_1}, \dots \angs{\mu_n}$,
we define a map $f \colon \angs{\lambda} \to \angs{\mu_1} \otimes \cdots \otimes \angs{\mu_n}$ as a map $f \colon V\angs{\lambda} \to V\angs{\mu_1} \times \cdots \times V\angs{\mu_n}$ such that if $(a,b) \in E\angs{\lambda}$, then for \emph{some} $i$, there is an edge $(f_i(a),f_i(b)) \in E\angs{\mu_i}$.
This is misleading notation, because there is no operation $\otimes$ on cographs that gives this.
The anti-operad $\DD_{\otimes}$ is however a symmetric \emph{promonoidal} structure \cite[Df. 1.4]{MR3933393} on $\DD$, which suffices for our purposes.

Now assume that the category $\XX$ has products and finite colimits.
Then the category $\Isol(\XX)$ has a Day convolution symmetric monoidal structure $\otimes$ corresponding to the symmetric promonoidal structure $\DD_{\otimes}$ \cite[1.6]{MR3933393}.
We call this symmetric monoidal structure the \emph{tensor product} of isolability objects.

To unpack this, let $X^\bullet$ and $Y^\bullet$ be isolability objects of $\XX$.
Then the tensor product is given by 
\[
  (X \otimes Y)^{\lambda} = \colim_{\angs{\lambda} \to \angs{\mu_1} \otimes \angs{\mu_2}} X^{\mu_1} \times X^{\mu_2} \comma
\]
where the colimit is formed over the category of triples $(\angs{\mu_1}, \angs{\mu_2}, \phi)$, where $\phi \colon \angs{\lambda} \to \angs{\mu_1} \otimes \angs{\mu_2}$.
Cofinal in this category are those triples in which the two maps $V\angs{\lambda} \to V\angs{\mu_1}$ and $V\angs{\lambda} \to V\angs{\mu_2}$ are each bijections.

For example, one has
\[
  (X \otimes Y)^{\ul{1} \rightsum \ul{1}} = (X^2 \times Y^{\ul{1} \rightsum \ul{1}}) \cup^{(X^{\ul{1} \rightsum \ul{1}} \times Y^{\ul{1} \rightsum \ul{1}})} (X^{\ul{1} \rightsum \ul{1}} \times Y^2) \period
\]

\subsection{Observer stacks}%
\label{sub:observer}


Any reasonable category of geometric objects can be embedded into a topos%
\footnote{Recall that a \enquote{topos} for us is what elsewhere is called an \enquote{$\infty$-topos} \cite{MR2522659}.}
in a way that preserves existing features of the category,
such as limits and gluing constructions.
We call an objects of a topos a \emph{stack} on that topos.
For example, if we are considering the category $\categ{Man}$ of smooth manifolds, then
we may embed it into the topos of stacks on euclidean spaces relative to the smooth topology.
Up to set-theoretical issues 
(which can all be addressed using standard tricks),
any category of schemes can be embedded into a category of stacks for one of several different topologies.
Even (compactly generated) topological spaces can be embedded into the topos of condensed (\emph{alias} pyknotic) spaces.

A big advantage of enlarging a category of geometric objects in this way is that
one can then perform a variety of constructions that might be unavailable in the smaller category.
Historically, the first example is the formation of quotients $X/G$.
More constructions become available by observing that a functor $\XX^{\op} \to \Space$ is representable by a stack iff
it \emph{satisfies descent} -- \ie, it carries colimits of $\XX$ to limits.
For example, one defines the \emph{mapping stack} $\MAP(Y,X)$ as the stack that represents the functor $T \mapsto \Map(T \times Y, X)$.
Similarly, one defines $\Obj_X$, the \emph{stack of objects over $X$}, as the stack that represents the functor that carries an object $T$ to the groupoid core $\iota(\XX_{/T \times X})$ of the overcategory;
that this is a stack is a fundamental fact about topoi, sometimes expressed with the motto \enquote{all colimits in $\XX$ are van Kampen}.

We highlight here a natural approach to defining isolability structures on certain stacks on $\XX$.

Let $x \in X(S) = \Map(S,X)$ and $y \in X(T) = \Map(T,X)$ be two points of a stack $X$.
We say that $x$ and $y$ are \emph{disjoint} iff
the path stack $S \times_X T$ is empty.

Let $X$ be a stack, and consider a stack $O_X$ over the stack $\Obj_X$.
Thus a point of $O_X$ is an object over $X$ along with some additional structure.
We think of the stack $O_X$ as telling us what sort of observables we are considering.
For example, we could let $O_X = X$ itself, in which case we are considering only point observables.
The Hilbert scheme of a projective variety $X$ (say) is a more interesting example to bear in mind;
in this case, we are considering extended observables along closed subvarieties.

There are many ways to equip $O_X$ with an isolability structure.
Perhaps the easiest is the one in which two objects over $X$ are isolated iff they do not intersect.
More precisely, for every cograph $\angs{\lambda}$ and every $T \in \XX$, define
\[
  O_X^{\lambda}(T) \coloneq \{Z \in O_X(T)^{V\angs{\lambda}} : (a,b) \in E\angs{\lambda} \implies Z_a \times_{T \times X} Z_b = \varnothing\} \period
\]
In this formula we are implicitly applying our forgetful map $O_X \to \Obj_X$ to $Z_a$ and $Z_b$ in order to form this fiber product.
The isolability stack $O_X^{\bullet}$ that results is both additive and local.

When $O_X = X$ itself, we end up with an isolability structure on $X$ in which
points $x,y \in X(T)$ are isolated in this structure iff their equalizer $\enquine{x = y}$ is empty.
The resulting isolability stack $X^\bullet$ is $2$-skeletal.
However, in general this isolability structure on $O_X$ is not skeletal at all.

\subsection{Hilbert schemes as observer stacks}%
\label{sub:Hilbertschemes}
Let $k$ be a field, and let $X$ be a $k$-variety.
Write $\Hilb_{X/k}$ for the Hilbert algebraic space,
whose $T$-points are the closed subvarieties of $T \times_k X$ that are proper and fppf over $T$.
We can now define $\Hilb^{\bullet}_{X/k}$ as an isolability stack using the recipe of \cref{sub:observer}.
Thus the $k$-points of $\Hilb^{\lambda}_{X/k}$ are tuples $(Z_a)_{a \in V\angs{\lambda}}$ of subvarieties of $X$ such that if $(a,b) \in E\angs{\lambda}$, then $Z_a$ does not intersect $Z_b$.

This isolability structure is $2$-skeletal when $X$ is a curve, but not more generally.

\subsection{$\Div^1$ as an observer stack}%
\label{sub:FFcurve}
As an illustration of the scope of the technology, here is an isolability space that arises naturally in $p$-adic geometry in the style of \cite{MR4446467}.

Write $\categ{Perfd}$ for the category of perfectoid spaces, and
write $\categ{Perf}$ for the category of perfectoid spaces of characteristic $p$.
The tilting functor $U \mapsto U^{\flat}$ is a functor $\categ{Perfd} \to \categ{Perf}$.
If $U$ is perfectoid, then tilting defines an equivalence $\categ{Perfd}_{/U} \equivalence \categ{Perf}_{/U^{\flat}}$ \cite[7.1.4]{MR4446467}.
Consequently, the tilting functor is a right fibration with discrete fibers.
The fiber over a perfectoid space $T$ of characteristic $p$ is the set of \emph{untilts} of $T$.

Let $\XX$ be the topos of proétale stacks on $\categ{Perf}$.
Given an analytic adic space $V$ over $\Spa \ZZ_p$, the right fibration $\categ{Perfd}_{/V} \to \categ{Perf}$ straightens to an object $V^{\diamond} \in \XX$.
For example, one has $\Spd \QQ_p \coloneq (\Spa \QQ_p)^{\diamond}$:
if $T$ is a perfectoid space of characteristic $p$, then the $T$-points of $\Spd \QQ_p$ are the characteristic $0$ untilts of $T$.
We obtain a relative version over $S \in \Perf$ by forming $Y_S^{\diamond} \coloneq S \times \Spd \QQ_p$, from which we form the quotient $X_S^{\diamond} \coloneq Y_S^{\diamond}/(\phi^{\ZZ} \times \id)$.
The object $X_S^{\diamond}$ is the \emph{Fargues--Fontaine curve}, regarded as a \enquote{diamond}.

In \cite{geometrization}, Fargues and Scholze also define a \enquote{mirror curve} $\Div^1 \coloneq \Spd \QQ_p/\phi^{\ZZ}$.
The notation here reflects the fact that an $S$-points of $\Div^1$ defines a degree $1$ Cartier divisor $D_S$ on $X_S$.
In their geometrization of local Langlands, Hecke operations arise via modifications of vector bundles on the Fargues--Fontaine curve which occur along such divisors \cite[Chapter VI]{geometrization}.
So the natural observer stack (\cref{sub:observer}) for the Fargues--Fontaine curve is $\Div^1$.

Using the recipe from \cref{sub:observer}, we obtain the isolability object $\Div^{\bullet}$.
Explicitly, for a cograph $\angs{\lambda}$, the $T$-points of $\Div^{\lambda}$ are collections $(T^{\sharp}_a)_{a \in V\angs{\lambda}}$ of characteristic $0$ untilts of $T$ up to Frobenius such that if $(a,b) \in E\angs{\lambda}$, then $D_{T_a} \times_{X_T} D_{T_b} = \varnothing$.

\subsection{Comments \& questions}%
\label{sub:comments-isolability}
A natural question is whether there's any significant difference between an isolability object and its Ran space.
We may think of an isolability space as a suitable \emph{input} for a Ran space construction.
Certainly in many situations, both the isolability object and the Ran space attempt to assemble configuration spaces of points.
And indeed, in these situations, the data of the isolability space is roughly equivalent to the data of its Ran space as a commutative monoid.

On the other hand, we do not understand the Ran space construction in very much generality;
in this paper, we consider the Ran space only in the \emph{homotopical context} (\cref{sub:ran}).
There, it turns out to be a pullback of a twisted form of the total space of the isolability space.
For isolability objects that are not $2$-skeletal, thus Ran space contains \emph{strictly less} information.
It may be beneficial to try to develop a notion of a Ran space for isolability structures on observer isolability stacks \cref{sub:observer}.

Another natural question is how one ought to think about the locality condition.
We do not have an explanation for it from any sort of first principles, and
we don't know how seriously to take it.
We only note that it is satisfied in the most interesting examples.

A propos of \cref{sub:products}, one wonders how (or whether?) the operad structure on $\DD^{\op}$ is an emergent structure just from the universal properties of $\DD$ as a twofold symmetric monoidal category.


\section{Isolability spaces \& the homotopical context}%
\label{sec:homotopical}
The observables of a \emph{topological} quantum field theory constitute a \emph{locally constant factorization algebra}.
This is the \emph{homotopical context} discussed in the introduction.
Locally constant factorization algebras are precisely the same as factorization algebras on what we call \emph{isolability spaces}.
These are not quite the same thing as isolability objects in spaces, because one needs to incorporate stratifications.

Using structures on cographs, we give a combinatorial model for isolability spaces attached to the real line and to euclidean spaces more generally.

\subsection{Stratifications}%
\label{sub:Stratifications}
Stratifications arise naturally in our story.
For instance, in our motivating example \cref{sub:basicexample},
the topological space $X^n$ admits a natural stratification by the various weak diagonals and their complements.
So that we can describe the interaction between stratifications and isolability structures, we recall some basic elements of the language.

Let $P$ be a poset.
We endow $P$ with the \emph{Alexandroff topology}, in which $U \subseteq P$ is open iff $(p \leq q) \wedge (p \in U) \implies q \in U$.
Now if $X$ is a topological space, then a \emph{$P$-stratification} of $X$ is a continuous map $f \colon X \to P$;
the fiber $X_p$ is the $p$-th \emph{stratum}.
A stratified map $(f/\phi) \colon (X/P) \to (Y/Q)$ now consists of a continuous map $f \colon X \to Y$ and a monotonic map $\phi \colon P \to Q$ that enjoy the expected compatibility.

For example, the interval $I = [0,1]$ is stratified over the poset $\{0<1\}$ by the ceiling function.
The strata are $\{0\}$ and the half-open interval $(0,1]$.
A stratified map $I \to (X/P)$ is called an \emph{exit path} in $(X/P)$.
More generally, the standard topological simplices $|\Delta^n| \subset \RR^{n+1}$ are stratified over the poset $[n] \coloneq \{0<\cdots<n\}$ by the map $t \mapsto \max\{i \in [n] : t_i \neq 0\}$.
Stratified maps from these are stratified (higher) homotopies.

There is a well-behaved homotopy theory of these objects.
A \emph{$P$-stratified space} is a category $X$ along with a conservative functor $\phi \colon X \to P$;
that is, for every $p \in P$, the fiber $X_p$ of $\phi$ over $p$ is a space.
This space is called the $p$-th \emph{stratum} of $X$.
Additionally, given $(p,q)$ in $P$ with $p \leq q$, we can speak of the \emph{link} of $X$ over $(p, q)$ of $P$, which is the space of sections $\{p \leq q\} \to X$;
this too is a space.

Stratified topological spaces have homotopy types, which we call \emph{stratified spaces} \cite{MR4732611}:
as long as the stratified topological space $X/P$ is sufficiently nice
(fibrant in Haine's model structure \cite{MR4732611}, conically stratified in the sense of Lurie \cite{HA}),
we can define a category $\Pi(X/P)$ in the following manner.
The objects are points of $X$, the morphisms are exit paths, and the higher morphisms are stratified homotopies.
That is, this category is defined so that a functor $[n] \mapsto \Pi(X/P)$ is precisely a stratified map $(|\Delta^n|/[n]) \to (X/P)$.
The continuous map $X \to P$ induces a conservative functor $\Pi(X/P) \to P$.

In other words, $\Pi(X/P)$ is a $P$-stratified space.
With a natural notion of \emph{weak equivalence} of stratified topological spaces,
the assignment $(X/P) \mapsto \Pi(X/P)$ is then an equivalence of homotopy theories \cite{MR4732611}.

For our purposes here, we do not want to carry around the poset over which we will be stratifying as extra structure. 
Accordingly, we simply declare that a \emph{stratification} is a category in which every endomorphism is an equivalence.
(Elsewhere, these are called \emph{layered categories} or \emph{EI categories}.)
Every such category $X$ maps conservatively to the poset $P_X$ of equivalence classes of objects of $X$ in which $x \leq y$ iff there exists a morphism $x \to y$ in $X$.
Thus a stratification $X$ in our sense is a $P_X$-stratified space whose strata and links are all connected.

We write $\Strat$ for the full subcategory of $\Cat$ consisting of stratifications.

Certain categorical properties have interpretations in the context of stratified homotopy theory.
A \emph{closed immersion} is a sieve inclusion $Z \inclusion X$ -- \ie, a full faithful inclusion in which, for every edge $x \to y$ in $X$, if $y \in Z$, then $x \in Z$.
Dually, an \emph{open immersion} in $X$ is a cosieve inclusion $U \inclusion X$.
Thus open immersions have closed complements, and \emph{vice versa}.
\emph{Locally closed immersions} are fully faithful inclusions $W \inclusion X$ such that every object through which a morphism of $W$ factors is itself an object of $W$.
All of these conditions are pulled back from the analogous ones at the level of posets $P_X$.

\subsection{Isolability spaces}%
\label{sub:isolabilityspaces}
An \emph{isolability space} is defined to be a functor $\DD^{\op} \to \Strat$.
One way to specify an isolability space is to take an \emph{isolability topological space} $(X^{\bullet}/P^{\bullet})$ over an \emph{isolability poset} $P^\bullet$,
and form the stratified homotopy type degreewise.

Perhaps the most important example for us will be the \emph{canonical isolability poset}
\[
  \angs{\lambda} \mapsto K^{\lambda} \coloneq \DD^{\disp}_{\angs{\lambda}/} \comma
\]
where the map $K^{\lambda} \to K^{\mu}$ attached to $\phi \colon \angs{\mu} \to \angs{\lambda}$ carries a dispersive map $\delta \colon \angs{\lambda} \to \angs{\lambda'}$ to the dispersive map appearing in the dispersive/accretive factorization of the composite $\delta\phi$.
The construction of \cref{sub:isolabilityobjects} gives, for every continuous map of topological spaces $f \colon X \to Y$, an isolability topological space $X_f^\bullet$.
When $Y$ is Hausdorff, we may stratify each $X_f^{\lambda}$ over $K^{\lambda}$ by sending $x \in X_f^{\lambda}$ to the dispersive map $\angs{\lambda} \to \angs{\lambda'}$ in which $(i,j) \in E\angs{\lambda'}$ iff $f(x_i) \neq f(x_j)$. 
(This map actually lands in the fragment $K^{\lambda}_{\leq 2} \coloneq \DD^{\disp}_{\angs{\lambda}/} \times_{\DD} \DD_{\leq 2}$.)

In any case, we form the stratified homotopy type degreewise, giving an isolability space
\[
  \angs{\lambda} \mapsto \Pi(X_f^{\lambda}/K^{\lambda}) \period
\]
The assignment $f \mapsto \Pi(X_f^{\bullet}/K^{\bullet})$ is a functor $\TwArr(\Haus) \to \Isol(\Strat)$ --
or, equivalently, a functor $\Haus \to \Span(\Isol(\Strat))$.

Factorization algebras on the isolability stratification $\Pi(X^{\bullet}/K^{\bullet})$ will turn out to be the same as \enquote{locally constant factorization algebras}.
Thus isolability spaces are the basic \enquote{spaces} of the \emph{homotopical context} for factorization algebras.
We regard $\Pi(X^{\bullet}/K^{\bullet})$ as an \emph{isolability homotopy type} attached to the topological space $X$.

An isolability space $X^{\bullet}$ is \emph{Hausdorff} iff:
\begin{itemize} 
  \item every accretive surjection $i$ is carried to a closed immersion $\tensor*[_X]{i}{}$ of stratified spaces, and
  \item every dispersive map $j$ is carried to an open immersion $\tensor*[_X]{j}{}$ of stratified spaces.
\end{itemize}
It is further said to be \emph{separated} iff, in addition:
\begin{itemize}
  \item the open immersion $\tensor*[_X]{j}{}$ induced by a dispersive map $j$ is complementary to the union of all the closed immersions $\tensor*[_X]{i}{}$ induced by the accretive surjections $i$ that are incompatible with $j$.
\end{itemize}

\subsection{Para-isolability \& envelopes}%
\label{sub:envelopes}

In order to specify an isolability object $X^\bullet$, one wants to specify the objects $X^{\lambda}$. 
Morally, the $T$-points are collections $x = (x_i)_{i \in V\angs{\lambda}}$ of $T$-points of $X$ such that if $(i,j)$ is an edge, then $x_i$ and $x_j$ are \enquote{distant}.
It often happens (\eg, \cref{sub:1cographs}) that it is easier to specify the collections $x$ such that $x_i$ and $x_j$ are \enquote{distant} \emph{if and only if} $(i,j)$ is an edge.
We have a recipe for turning this data into an isolability object.

So now let $p \colon \XX \to \DD$ be a functor, and
let us assume that $p$ is only a cartesian fibration over the accretives.
That is, for every accretive morphism $g \colon \angs{\lambda} \to \angs{\mu}$ of cographs and every $y$ lying over $\mu$, there exists a $p$-cartesian morphism $x \to y$ lying over $g$.
In this case, we may call $\XX$ a \emph{para-isolability category}.
This is tantamount to a functor $X$ from $\DD^{\op}$ to profunctors that are honest functors on accretives.

Morally, the \emph{envelope} of $X$ is the isolability category $\Env(X) \colon \DD^{\op} \to \Cat$ that carries $\angs{\lambda}$ to the category of pairs $(\phi \colon \angs{\lambda} \to \angs{\mu}, x)$ in which $\phi$ is a dispersive map in $\DD$, and $x \in X_{\mu}$.
In other words, $\Env(X)$ will carry $\angs{\lambda}$ to $P^{\lambda} \times_{\DD} \XX$.
The functoriality works like this: if $\eta \colon \angs{\lambda'} \to \angs{\lambda}$ is any map of $\DD$, then
the induced functor $\Env(\DD)^{\lambda} \to \Env(\DD)^{\mu}$ carries the object $(\phi, x)$ to $(\psi,y)$, where $\phi \eta = \psi \theta$ for $\psi$ dispersive, $\theta$ accretive, and $y = \tensor*[_X]{\theta}{}(x)$.

To make this more precise, a piece of notation:
if $f \colon A \to C$ and $g \colon B \to C$ are functors, then
\[
A \rightorientedtimes_C B \coloneq A \times_{C} \Arr(C) \times_{C} B \period
\]
This is the \emph{oriented fiber product} in the bicategory of categories, also sometimes called \enquote{comma construction} for some reason.

Now the \emph{envelope} of $\XX \to \DD$ is the subcategory
\[
  \Env(\XX) \subset \DD \rightorientedtimes_{\DD} \XX
\]
whose objects are those pairs $(\phi \colon \angs{\mu} \to \angs{\lambda}, x) $ in which $\phi$ is dispersive.
We call such an object a \emph{generalized object} of $\XX$. 

The functor $\Env(\XX) \to \DD$ that carries $(\phi \colon \angs{\mu} \to \angs{\lambda}, x)$ to $\mu$ is a cartesian fibration.
The natural functor $\XX \to \Env(\XX)$ is fully faithful;
its left adjoint $\Env(\XX) \to \XX$, which carries $(\phi,x)$ to $x$, is thus a localization.

The envelope $\Env(\DD)$ is the cartesian fibration $\KK \to \DD$ corresponding to the canonical isolability poset $\angs{\lambda} \mapsto K^{\lambda}$. 
Consequently, in general, $\Env(\XX)$ lies over $\KK$.
In particular, when the fibers of $p$ are spaces, $\Env(\XX)$ is an isolability space over the isolability poset $K^{\bullet}$.
The pullback of $\Env(\XX) \to \KK$ over $\DD \to \Env(\DD) = \KK$ recovers $\XX$.

Malthe Sporring pointed out to me that this construction is a special case of a general construction of Haugseng \& Kock \cite{MR4682726} that will work in the presence of any factorization system.

Let $X$ be a Hausdorff topological space.
There is a para-isolability category $\PPi^{\para}(X)$ such that $\Env(\PPi^{\para}(X)) = \Pi(X^{\bullet}/K^{\bullet})$.
An object is a pair $(\angs{\lambda},x)$ consisting of an apartness relation $\angs{\lambda}$ and a map $x \colon V\angs{\lambda} \to X$ that is \emph{strictly separating} in the sense that $(a,b) \in E\angs{\lambda}$ iff $x_a \neq x_b$.
A morphism $(\angs{\lambda},x) \to (\angs{\mu},y)$ is a map $\phi \colon \angs{\lambda} \to \angs{\mu}$ along with a choice, for each $a \in V\angs{\lambda}$, a path $\gamma_a \colon [0,1] \to X$ from $x_a$ to $y_{\phi(a)}$ such that if $t>0$, then $(\phi(a),\phi(b)) \in E\angs{\mu}$ iff $\gamma_a(t) \neq \gamma_b(t)$.

To be more precise, an $n$-simplex of $\PPi^{\para}(X)$ consists of
an $n$-simplex $\Delta^n \to \DD_{\leq 2}$, which is a sequence $\angs{\lambda_0} \to \dots \to \angs{\lambda_n}$ of apartness relations along with
a map $F$ from the realization
\[
  |\Delta^n \times_{\FF} \FF_{\ast}| = \bigcup_{i=0}^n \bigcup_{a \in V\angs{\lambda_i}} |\Delta^{n-i}|
\]
to $X$ satisfying a condition.
To formulate the condition, let us write $F_a$ for the restriction of $F$ to the $(n-i)$-simplex corresponding to a vertex $a \in V\angs{\lambda_i}$ in the union above.
The condition is as follows:
let $a,b \in V\angs{\lambda_i}$ be vertices, let $j \geq i$, and let $t \in |\Delta^{n-i}|$ such that $t_j > 0$;
then $F_a(t) \neq F_b(t)$ iff the images of $a$ and $b$ are connected by an edge in $\angs{\lambda_j}$.

\subsection{$1$-cographs}%
\label{sub:1cographs}
In \cref{sec:combinatorics} we concentrated on undirected graphs, but
we will also consider a directed version of cographs, which we will call \emph{$1$-cographs}.

A \emph{$1$-cograph} $\angs{\gamma} = (V,E)$ consists of a set $V$ of \emph{vertices} along with an antisymmetric, transitive relation $E$ satisfying the following oriented version of the $P_4$-freeness condition \eqref{eqn:P4freeness}:
\begin{equation}\label{eqn:orientedP4freeness}
  \forall w,x,y,z \in V \quad \{(w,x), (y,x), (y,z)\} \subseteq E \implies \{(y,w), (w,z), (z,x)\} \cap E \neq \varnothing \period
\end{equation}
Reflexive $1$-cographs are in particular posets, which are often called \emph{series-parallel posets}.

The theory of $1$-cographs almost perfectly mirrors the theory of cographs.
For example, as in \cref{sub:sums}, there are two ways to combine two $1$-cographs $\angs{\gamma}$ and $\angs{\delta}$:
on one hand, there is the disconnected sum $\angs{\gamma \leftsum \delta}$, in which every element of $\angs{\gamma}$ is incomparable with every element of $\angs{\delta}$;
on the other, there is the ordered connected sum $\angs{\gamma \ordrightsum \delta}$, in which there is an edge $(x,y)$ for every vertex $x$ of $\angs{\gamma}$ and every vertex $y$ of $\angs{\delta}$.

Again there are two singletons -- the irreflexive one $\angs{\ul{1}}$, and the reflexive one $\angs{\overrightarrow{1}}$.
Forming disconnected sums of the irreflexive singleton gives the trivial $1$-cograph $\angs{\ul{n}}$;
forming ordered connected sums of the reflexive singleton gives the totally ordered finite set $\angs{\overrightarrow{n}}$.
The class of $1$-cographs is the smallest class of posets containing the singletons that is closed under these two sums (\cf \cref{sub:depthfiltration}).

For example, by alternating oriented connected and disconnected sums, we can form a $1$-cograph version of $\paw_k$:
\[
  \overrightarrow{\paw}_k \coloneq \angs{(((\ul{1} \ordrightsum \ul{1}) \leftsum \ul{1}) \ordrightsum \ul{1}) \leftsum \cdots \ul{1}} \period
\]

\begin{table}
  \label{table:orientedpawns}
  \centering
  \begin{tabular}{cc}
    $n$ & $\overrightarrow{\paw}_n$ \\
    \hline
    $1$ &
    \begin{tikzpicture}[baseline=0]
      \tikzstyle{vertex}=[circle,fill=black,minimum size=4pt,inner sep=0pt]
      \node[vertex] (v_2) at (0,0) {};
      \node[fit=(current bounding box),inner sep=1mm]{};
    \end{tikzpicture}
    \\
    $2$ &
    \begin{tikzpicture}[baseline=0,decoration={markings,
    mark=at position 0.5 with {\arrow{stealth}}}]
      \tikzstyle{vertex}=[circle,fill=black,minimum size=4pt,inner sep=0pt]
      \node[vertex] (v_1) at (-1,0) {};
      \node[vertex] (v_2) at (0,0) {};
      \draw[postaction=decorate] (v_1) -- (v_2);
      \node[fit=(current bounding box),inner sep=1mm]{};
    \end{tikzpicture}
    \\
    $3$ &
    \begin{tikzpicture}[baseline=0,decoration={markings,
    mark=at position 0.5 with {\arrow{stealth}}}]
      \tikzstyle{vertex}=[circle,fill=black,minimum size=4pt,inner sep=0pt]
      \node[vertex] (v_1) at (-1,0) {};
      \node[vertex] (v_2) at (0,0) {};
      \node[vertex] (v_3) at (1,0) {};
      \draw[postaction=decorate] (v_1) -- (v_2);
      \node[fit=(current bounding box),inner sep=1mm]{};
    \end{tikzpicture}
    \\
    $4$ &
    \begin{tikzpicture}[baseline=0,decoration={markings,
    mark=at position 0.5 with {\arrow{stealth}}}]
      \tikzstyle{vertex}=[circle,fill=black,minimum size=4pt,inner sep=0pt]
      \node[vertex] (v_1) at (-1,-0.5) {};
      \node[vertex] (v_2) at (-1,0.5) {};
      \node[vertex] (v_3) at (0,0)  {};
      \node[vertex] (v_4) at (1,0)  {};
      \draw[postaction=decorate] (v_1) -- (v_2);
      \draw[postaction=decorate] (v_1) -- (v_3);
      \draw[postaction=decorate] (v_2) -- (v_3);
      \draw[postaction=decorate] (v_4) -- (v_3);
      \node[fit=(current bounding box),inner sep=1mm]{};
    \end{tikzpicture}
    \\
    $5$ &
    \begin{tikzpicture}[baseline=0,decoration={markings,
    mark=at position 0.5 with {\arrow{stealth}}}]
      \tikzstyle{vertex}=[circle,fill=black,minimum size=4pt,inner sep=0pt]
      \node[vertex] (v_1) at (-1,-0.5) {};
      \node[vertex] (v_2) at (-1,0.5) {};
      \node[vertex] (v_3) at (0,0) {};
      \node[vertex] (v_4) at (1,0) {};
      \node[vertex] (v_5) at (2,0) {};
      \draw[postaction=decorate] (v_1) -- (v_2);
      \draw[postaction=decorate] (v_1) -- (v_3);
      \draw[postaction=decorate] (v_2) -- (v_3);
      \draw[postaction=decorate] (v_4) -- (v_3);
      \node[fit=(current bounding box),inner sep=1mm]{};
    \end{tikzpicture}
    \\
    $6$ &
    \begin{tikzpicture}[baseline=0,decoration={markings,
    mark=at position 0.5 with {\arrow{stealth}}}]
      \tikzstyle{vertex}=[circle,fill=black,minimum size=4pt,inner sep=0pt]
      \node[vertex] (v_1) at (-1.155,0) {};
      \node[vertex] (v_2) at (-0.577,1) {};
      \node[vertex] (v_3) at (-0.577,0.333) {};
      \node[vertex] (v_4) at (-0.577,-1) {};
      \node[vertex] (v_5) at (0,0) {};
      \node[vertex] (v_6) at (1,0) {};
      \draw[postaction=decorate] (v_2) -- (v_1);
      \draw[postaction=decorate] (v_3) -- (v_1);
      \draw[postaction=decorate] (v_4) -- (v_1);
      \draw[postaction=decorate] (v_3) -- (v_2);
      \draw[postaction=decorate] (v_1) -- (v_5);
      \draw[postaction=decorate] (v_2) -- (v_5);
      \draw[postaction=decorate] (v_3) -- (v_5);
      \draw[postaction=decorate] (v_4) -- (v_5);
      \draw[postaction=decorate] (v_6) -- (v_5);
      \node[fit=(current bounding box),inner sep=1mm]{};
    \end{tikzpicture}
    \\
  \end{tabular}
  \caption{The first few $1$-cographs $\overrightarrow{\paw}_n$}
\end{table}

We write $\GG^{[1]}$ for the category of $1$-cographs and relation-preserving maps.
Now $(\GG^{[1]},\leftsum,\ordrightsum)$ is a twofold monoidal $1$-category in which the monoidal structure on the left is symmetric.
For short, we call these $\infty \choose 1$-monoidal $1$-categories.
Contained in $\GG^{[1]}$ are the $\infty \choose 1$-monoidal full subcategories $\DD^{[1]}$ of irreflexive $1$-cographs (which in particular are strict partial orders) and $\EE^{[1]}$ of reflexive $1$-cographs (which in particular are posets).
The vertex functor $V \colon \GG^{[1]} \to \FF$ is a cartesian fibration,
and $\DD^{[1]}$ and $\EE^{[1]}$ are full fibered subcategories of $\GG^{[1]}$.

The category $\FF$ can be identified with the full subcategory $\DD^{[1]}_{\leq 1} \subset \DD^{[1]}$ consisting of the trivial $1$-cographs $\angs{\ul{n}}$.
The category $\OO$ of totally ordered finite sets can be identified with the full subcategory $\EE^{[1],\leq 1} \subset \EE^{[1]}$ consisting of the totally ordered finite sets $\angs{\overrightarrow{n}}$.

Alternating the two sums, we construct exhaustive filtrations
\[
  \GG^{[1]}_{\leq \bullet} \subset \GG^{[1]} \comma \qquad
  \GG^{[1],\leq \bullet} \subset \GG^{[1]} \comma \qquad
  \DD^{[1]}_{\leq \bullet} \subset \DD^{[1]} \comma \andeq
  \EE^{[1],\leq \bullet} \subset \EE^{[1]}
\]
by full subcategories fibered over $\FF$ (\cf \cref{sub:depthfiltration}).
For example, the objects of $\DD^{[1]}_{\leq 2}$ are \emph{oriented apartness relations},
which take the form $\angs{\ul{n}_1 \ordrightsum \cdots \ordrightsum \ul{n}_k}$.

There are some contrasts with $\DD$ and $\EE$.
Most importantly, observe that $\DD^{[1]}$ and $\EE^{[1]}$ are not vertical opposites.
Rather, to get an explicit model of $\DD^{[1],\vop}$, one can take a category of bicolored complete graphs where one color (say \enquote{red}) forms a reflexive cograph and the other (say \enquote{blue}) forms an irreflexive $1$-cograph.
This observation and various generalizations are due to Willow Bevington and Malthe Sporring.

A new thing we can do with $1$-cographs is to form the \emph{opposite} $\angs{\gamma}^{\op}$ of a $1$-cograph $\angs{\gamma}$.
This operation defines an automorphism $\GG^{[1]} \to \GG^{[1]}$, which preserves the two singletons, is compatible with $\leftsum$, and reverses the order of $\ordrightsum$, in the sense that $\angs{\gamma \ordrightsum \delta}^{\op} = \angs{\delta}^{\op} \ordrightsum \angs{\gamma}^{\op}$.

We have a forgetful functor $S \colon \GG^{[1]} \to \GG$.
If $\angs{\gamma}$ is a $1$-cograph with $\angs{n} = V\angs{\gamma}$, then
\[
  S\angs{\gamma} = \angs{\gamma} \cup^{\angs{\ul{n}}} \angs{\gamma}^{\op} \period
\]
Note that this forgetful functor is symmetric monoidal with respect to $\leftsum$ and monoidal with respect to $\ordrightsum$.
If $\angs{\lambda} = S\angs{\gamma}$, then we say that $\angs{\gamma}$ is a \emph{$1$-structure} on $\angs{\lambda}$.
The collection of all $1$-structures over $\angs{\lambda}$ -- \emph{i.e.}, the fiber $\GG^{[1]}_{\angs{\lambda}}$ of $S$ over $\angs{\lambda}$ -- is a set.

The forgetful functor $S$ is not quite a right fibration.
In other words, the assignment that carries a cograph $\angs{\lambda}$ to its set $\GG^{[1]}_{\angs{\lambda}}$ of $1$-structures isn't a functor.
For example, the transitivity property can be lost when we pass to an undirected sub-cograph of a $1$-cograph.
For example, $\angs{\ul{1} \rightsum \ul{2}}$ is contained in $S\angs{\ul{1} \ordrightsum \ul{1} \ordrightsum \ul{1}}$:
\[
  \begin{tikzpicture}[baseline=8,decoration={markings,
  mark=at position 0.5 with {\arrow{stealth}}}]
    \tikzstyle{vertex}=[circle,fill=black,minimum size=4pt,inner sep=0pt]
    \node[vertex] (v_1) at (-1,0) {};
    \node[vertex] (v_2) at (0,1) {};
    \node[vertex] (v_3) at (1,0) {};
    \draw (v_1) -- (v_2);
    \draw (v_2) -- (v_3);
    \node[fit=(current bounding box),inner sep=1mm]{};
  \end{tikzpicture}
  \inclusion
  \begin{tikzpicture}[baseline=8,decoration={markings,
mark=at position 0.5 with {\arrow{stealth}}}]
    \tikzstyle{vertex}=[circle,fill=black,minimum size=4pt,inner sep=0pt]
    \node[vertex] (v_1) at (-1,0) {};
    \node[vertex] (v_2) at (0,1) {};
    \node[vertex] (v_3) at (1,0) {};
    \draw[postaction=decorate] (v_1) -- (v_2);
    \draw[postaction=decorate] (v_1) -- (v_3);
    \draw[postaction=decorate] (v_2) -- (v_3);
    \node[fit=(current bounding box),inner sep=1mm]{};
  \end{tikzpicture}
\]

In effect, this is the only thing that can go wrong.
The functor $S$ \emph{does} restrict to a right fibration over the accretive morphisms.
More precisely, for every accretive morphism $f \colon \angs{\lambda} \to \angs{\mu}$ of $\GG$ and every $1$-structure $\angs{\delta}$ on $\angs{\mu}$,
there is an $S$-cartesian lift $\angs{\gamma} \to \angs{\delta}$ of $f$.
In particular, $\DD^{[1]}$ is a para-isolability space in the sense of \cref{sub:envelopes}.

One can therefore form the envelope of $\DD^{[1]}$ to obtain the isolability stratification 
\[
  \Env(\DD^{[1]}) \to \DD \comma
\]
which lies over the canonical isolability poset $K^{\bullet}$.

More explicitly, the objects of $\Env(\DD^{[1]})$ are \emph{generalized $1$-cographs}, which are pairs $(\angs{\lambda},\angs{\gamma})$ consisting of
a $1$-cograph $\angs{\gamma}$ and
a subcograph $\angs{\lambda} \subseteq S\angs{\gamma}$ with the same vertices.

The $2$-skeleton of the envelope $\Env(\DD^{[1]})$ turns out to be equivalent to the isolability stratification attached to the real line, as we shall now show.

\subsection{The isolability line \& $1$-cographs}%
\label{sub:isolabilityline}
The isolability space $\Pi(\RR^\bullet/K^\bullet)$ is what we call the \emph{isolability line}.
The value on a cograph $\angs{\lambda}$ is the stratified space of separating functions $x \colon V\angs{\lambda} \to \RR$.
Here we present a more combinatorial description of the isolability line.

Recall (\cref{sub:1cographs}) that irreflexive $1$-cographs form a para-isolability space $\DD^{[1]}$;
we will actually only be interested in the segment $\DD^{[1]}_{\leq 2}$ consisting of the oriented apartness relations.
The envelope (\cref{sub:envelopes}) of $\DD^{[1]}_{\leq 2}$ is the isolability space $L^{\bullet}$ in which each $\angs{\lambda}$ is carried to $L^{\lambda} \coloneq K^{\lambda} \times_{\DD} \DD^{[1]}_{\leq 2}$.
In other words, $L^\lambda$ is the poset of dispersive maps $j \colon \angs{\lambda} \to S\angs{\gamma}$, where $\angs{\gamma}$ is an oriented apartness relation.

The topological space $\RR^\lambda$ is naturally stratified over the poset $L^\lambda$:
to every separating function $x \colon V\angs{\lambda} \to \RR$ we assign the dispersive map $\angs{\lambda} \to S\angs{\gamma_x}$, where $\angs{\gamma_x}$ is the $1$-cograph with the same vertices as $\angs{\lambda}$, but now $(a,b) \in E\angs{\gamma_x}$ iff $x_a < x_b$.
Furthermore, the strata and links of the $E^{\lambda}$-stratified topological space $\RR^{\lambda}$ are all contractible.
Consequently:
\begin{prp}
  $\Pi(\RR^{\bullet}/K^{\bullet}) = L^{\bullet}$.
\end{prp}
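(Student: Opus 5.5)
The plan is to identify both sides degreewise as $L^{\lambda}$-stratified spaces and then check that the identifications are natural in $\angs{\lambda} \in \DD^{\op}$. The statement writes $K^\bullet$, but the stratification in play is the refined one over $L^\bullet$ described just above: each separating function $x$ is sent to the dispersive map $\angs{\lambda} \to S\angs{\gamma_x}$. Composing with the forgetful map $L^{\lambda} \to K^{\lambda}$ (postcomposition with $S$) recovers the $K^\lambda$-stratification. So I first check that refining the poset does not change the stratified homotopy type. The key point is that over each stratum of $K^\lambda$ the fiber of $L^\lambda \to K^\lambda$ is discrete, because the order pattern of $x$ is locally constant on $\{x_a \neq x_b \text{ iff } (a,b)\in E\}$. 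Hence $\Pi(\RR^\lambda/K^\lambda)$ and $\Pi(\RR^\lambda/L^\lambda)$ are the same category, and the latter has a canonical conservative functor to $L^\lambda$.

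Next I verify that the map $\RR^\lambda \to L^\lambda$ is continuous for the Alexandroff topology. If $x_a < x_b$ at $x$, this persists nearby, so nearby points map to cographs with more edges, i.e.\ to targets under dispersive maps. I also check that the stratification is conically stratified, which makes $\Pi$ defined. Concretely, each stratum $\{x : \gamma_x = \gamma\}$ is an open convex polyhedral cone: a product of an open simplex of orderings of the clusters with the diagonal within each cluster. The stratification is the restriction, to the open subset $\RR^\lambda$, of the braid-arrangement stratification of $\RR^{V\angs{\lambda}}$, which is a Whitney stratification by convex pieces.

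The main step, and the one I expect to be the obstacle, is proving that $\Pi(\RR^\lambda/L^\lambda) \to L^\lambda$ is an equivalence. For this it suffices to show that every stratum is nonempty and contractible, and that every link (the space of exit paths from stratum $p$ to stratum $q$ for $p \leq q$) is contractible. Nonemptiness holds because any oriented apartness relation $\ul{n}_1 \ordrightsum\cdots\ordrightsum \ul{n}_k$ compatible with $\angs{\lambda}$ is realized by placing its $i$-th cluster at the point $i$. Contractibility of strata follows from convexity. For links, I would use Lurie's description of the link as a homotopy link. Near a point of stratum $p$, a deeper stratum $q$ is obtained by splitting clusters according to a prescribed order. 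The local link is therefore the intersection of a small sphere around that point with an open convex cone, which is contractible. Straight-line homotopies, together with convexity of the closure of $q$ intersected with the star of $p$, should make this rigorous.

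Finally, naturality in $\angs{\lambda}$ must be checked. For $\phi \colon \angs{\mu}\to\angs{\lambda}$, the induced map $\RR^\lambda \to \RR^\mu$ is precomposition. The order cograph of $x\circ\phi$ is exactly the pullback $1$-structure, namely the $S$-cartesian lift over the accretive part used to define the envelope functoriality. So the maps to $L^\bullet$ commute strictly, and the degreewise equivalences assemble into an equivalence of functors $\DD^{\op}\to\Strat$.
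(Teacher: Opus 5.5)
Your proposal is correct and takes the same approach as the paper: stratify $\RR^\lambda$ over $L^\lambda$ by the order pattern $x \mapsto (\angs{\lambda} \to S\angs{\gamma_x})$, then deduce $\Pi = L^\lambda$ from the contractibility of strata and links. You also supply details the paper leaves implicit, namely that the $K^\lambda$- and $L^\lambda$-exit-path categories coincide because $L$-strata are clopen in $K$-strata, conicality via the braid arrangement, and naturality via the cartesian lifts that define the envelope functoriality.
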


Using the tensor product of \cref{sub:products}, we can stratify the isolability topological space $(\RR^n)^{\bullet}$ over the isolability poset $(K^{\otimes n})^{\bullet} \coloneq (K \otimes \cdots \otimes K)^{\bullet}$.
We obtain:
\begin{cor}
  $\Pi((\RR^n)^{\bullet}/(K^{\otimes n})^{\bullet}) = (L^{\otimes n})^{\bullet}$.
\end{cor}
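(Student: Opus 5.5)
We deduce the corollary from the preceding proposition $\Pi(\RR^{\bullet}/K^{\bullet}) = L^{\bullet}$ by showing that both sides of the claimed equivalence are $n$-fold Day convolution tensor products (\cref{sub:products}), and that $\Pi$ commutes with that convolution in this situation. The first step is point-set. We identify the isolability topological space $(\RR^n)^{\bullet}$ with $(\RR^{\otimes n})^{\bullet}$, where the tensor product is formed in $\Isol(\Top)$. A map $x = (x^1,\dots,x^n)\colon V\angs{\lambda}\to\RR^n$ is separating iff for every edge $(a,b)$ some coordinate satisfies $x^i_a\neq x^i_b$. So $(\RR^n)^{\lambda}$ is the union over tuples $\angs{\mu_1},\dots,\angs{\mu_n}$ (same vertices as $\angs{\lambda}$, every edge of $\angs{\lambda}$ lying in some $\angs{\mu_i}$) of the subspaces $\RR^{\mu_1}\times\cdots\times\RR^{\mu_n}$. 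By the cofinality remark in \cref{sub:products}, this is exactly the Day convolution colimit, provided the colimit of open subspaces of $\RR^{n V\angs{\lambda}}$ computes their union. It does, because these are open subsets of a fixed space and the diagram is a finite union-closed diagram of opens. The same argument applied to the stratifications shows that the stratifying map $(\RR^n)^{\lambda}\to (K^{\otimes n})^{\lambda}$ is the convolution of the maps $\RR^{\mu_i}\to K^{\mu_i}$.

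The second step is to pass to stratified homotopy types. Stratifying $x$ by the $n$-tuple of dispersive maps recording, coordinatewise, the $1$-cographs $\angs{\gamma_{x^i}}$ refines the $(K^{\otimes n})^\lambda$-stratification to one over $(L^{\otimes n})^\lambda$. With respect to this refinement, each stratum is a product of strata of the $\RR^{\mu_i}$, and the links are likewise products. Hence, by the contractibility of strata and links established for the proposition, all strata and links are again contractible. This gives $\Pi((\RR^n)^\lambda/(K^{\otimes n})^{\lambda})\simeq (L^{\otimes n})^\lambda$, the poset itself, degreewise and naturally in $\angs{\lambda}$ (the functoriality in $\DD^{\op}$ is induced by that of $L$ and of $K$).

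The main obstacle is the compatibility of $\Pi$ with the colimit defining the convolution. Concretely, we must check that the $(K^{\otimes n})^\lambda$-stratified space $(\RR^n)^\lambda$ is conically stratified, so that $\Pi$ is defined and product strata and links genuinely compute it. We must also check that the open cover by the pieces $\prod_i\RR^{\mu_i}$ is carried by $\Pi$ to a colimit in $\Strat$, so that the convolution of the $L^{\mu_i}$ taken in $\Strat$ (rather than in posets) agrees with the poset $(L^{\otimes n})^\lambda$. For the first, we would exhibit $\RR^{nV\angs{\lambda}}$ as stratified by a finite arrangement of hyperplanes $x^i_a=x^i_b$ and $x^i_a<x^i_b$, whose face stratification is conical and refines everything in sight. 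For the second, we would use that open covers of conically stratified spaces satisfy descent for exit paths (van Kampen for $\Pi$), together with the fact that the cover is closed under finite intersections, indexed by a poset whose colimit in $\Strat$ is computed by the union of the corresponding subposets of face posets.
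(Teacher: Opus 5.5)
\textbf{Overall route.} Your route matches the paper's. The paper obtains the corollary simply by tensoring the proposition $\Pi(\RR^{\bullet}/K^{\bullet}) = L^{\bullet}$ using the convolution of \cref{sub:products}. Your degreewise computation is a sound way to make that precise: refine to the product of braid-arrangement face stratifications, whose strata and links are products of contractible ones. You should add one remark: each $(K^{\otimes n})^{\lambda}$-stratum is a disjoint union of refined strata that are open and closed in it, so the exit paths do not change.

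\textbf{The gap.} The two reasons you give for the colimit steps are false. The family of pieces $\prod_i \RR^{\mu_i}$ is neither union-closed nor closed under finite intersections. For a counterexample, take $n \geq 2$, $V\angs{\lambda} = \{1,2,3,4\}$, $\mu_2$ complete, and $\mu_1 \in \{\mu,\mu'\}$ with $E\mu = \{12,34\}$ and $E\mu' = \{23\}$. The intersection of the two pieces is cut out by the inequalities along a $P_4$. That is not a cograph, so the intersection is not a piece. Hence neither a \enquote{union-closed} argument in $\Top$ nor an intersection-closed van Kampen argument applies as stated.

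\textbf{The fix.} What you need in all three places (the $\Top$-colimit, van Kampen for exit paths, and the colimit in $\Strat$) is that each point, and each chain of faces, lies in a \emph{largest} piece. That makes the relevant index category weakly contractible, and it does hold.
\begin{itemize}
\item A point $x$ lies in $\prod_i \RR^{\mu_i}$ iff $\mu_i \subseteq \mu_i(x)$ as edge sets for all $i$, where $\mu_i(x) = \{(a,b) : x^i_a \neq x^i_b\}$ is an apartness relation. The tuple $(\mu_1(x),\dots,\mu_n(x))$ is itself an index because $x$ is separating, so it is the largest piece containing $x$.
\item A chain $p_0 \leq \dots \leq p_k$ of $n$-tuples of weak orders lies in $\prod_i L^{\mu_i}$ iff $\mu_i \subseteq S\gamma_i$, where the $\gamma_i$ are the components of $p_0$. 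So the tuple $(S\gamma_1,\dots,S\gamma_n)$ indexes the largest piece containing it.
\end{itemize}
With this you can skip van Kampen altogether. $(L^{\otimes n})^{\lambda}$ is a colimit of cosieve inclusions into the poset of $n$-tuples of weak orders on $V\angs{\lambda}$. The largest-element property identifies it with their union, namely the tuples jointly separating $\angs{\lambda}$. That is exactly the poset your step 2 produces for the left-hand side.
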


By changing stratification along $K^{\otimes n} \to K$,
this becomes a description $\Pi((\RR^n)^{\bullet}/K^{\bullet})$ in combinatorial terms.
(There is a more direct way to think about this isolability space,
which we will explore in future work with Malthe Sporring.)

\subsection{Ran spaces in the homotopical context}%
\label{sub:ran}
Recall that a fibered category $u \colon \AA \to \FF$ gives a factorization system $(\AA^{\disp},\AA_{\accr})$, where $\AA^{\disp}$ consists of the dispersive -- \ie, $u$-inverted -- maps, and $\AA_{\accr}$ consists of the accretive -- \ie, $u$-cartesian -- maps.
Using this factorization system, in \cref{sub:spans} we introduced the notion of a \emph{vertical opposite} $A^{\vop}$ of a fibered category $\AA \to \FF$ as the category of spans in $\AA$ where the backward maps are dispersive and the forward maps are accretive.
The \emph{horizontal opposite} is now the span category
\[
  \AA^{\hop} \coloneq \AA^{\vop,\op} = \Span(\AA;\AA^{\disp},\AA_{\accr}) 
\]
where the forward maps are dispersive and the backward maps are accretive.

For example, $\DD^{\hop} = \EE^{\op}$ (via negation).
Consider the section $\FF \inclusion \EE$ that carries $\angs{n}$ to $\angs{\ul{n}}_{\textit{refl}} = \angs{\ul{n} \cdot \ol{1}} = \angs{\ol{1} \leftsum \cdots \leftsum \ol{1}}$;
if we apply horizontal opposites, this becomes $\FF^{\op} \inclusion \DD^{\hop}$, which carries $\angs{n}$ to $\angs{\ol{n}}_{\textit{irr}} = \angs{\ol{n} \cdot \ul{1}} = \angs{\ul{1} \rightsum \cdots \rightsum \ul{1}}$.

Let us consider an isolability space $X^\bullet$ presented as a cartesian fibration $\XX \to \DD$.
By composition we regard $\XX$ as fibered over $\FF$.
Now the \emph{unital Ran space} of $X^{\bullet}$ is the category
\[
  \Ran^u(X^\bullet) \coloneq \XX^{\hop} \times_{\DD^{\hop}} \FF^{\op}
\]
over $\FF^{\op}$.
The \emph{Ran space} of $X^{\bullet}$ is the further pullback
\[
  \Ran(X^\bullet) \coloneq \XX^{\hop} \times_{\DD^\hop} \FF_s^{\op} \period
\]

If $\angs{\lambda}$ is an irreflexive cograph that admits an accretive map $\angs{\lambda} \to \angs{\ol{n} \cdot \ul{1}}$, then
necessarily $\angs{\lambda} \in \DD_{\leq 2}$.
That means that the section $\FF^{\op} \inclusion \DD^{\hop}$ factors through $\DD^{\hop}_{\leq 2} \subset \DD^{\hop}$.
Consequently, the Ran spaces attached to $X^{\bullet}$ only depend upon the $2$-skeleton $\sk_2(X^\bullet)$ (\cref{sub:skeletality}).

Let us consider the special case of $\Ran(\Pi(X^{\bullet}/K^{\bullet}))$. 
An object $(\angs{n},x)$ consists of a finite set $\angs{n}$ along with a point $x \in X^{\ol{n} \cdot \ul{1}}$.
This $x$ is a configuration of $n$ distinct points of $X$.
A map $(\angs{n},x) \to (\angs{m},y)$ is now a map $f \colon \angs{m} \to \angs{n}$ of finite sets along with a path from $x_j$ to $y_i$ whenever $f(i) = j$ which meet only at the $x_j$'s. 

More specifically, if $X$ is a manifold, then this Ran space is equivalent to the exit-path category of the unital Ran space as constructed by Cepek \cite[Df. 1.0.4]{cepekcombinatorics}, and
the Ran space $\Ran(\Pi(X^{\bullet}/K^\bullet))$ is the exit-path category of the usual (nonunital) Ran space \cite[Df. 1.0.10]{cepekcombinatorics}.

Our combinatorial descriptions of the isolability line and isolability $n$-space (\cref{sub:isolabilityline}) now reproduce a variant of the Cepek's theorem:
\begin{prp}
  $\Ran^u(\Pi((\RR^n)^\bullet/(K^{\otimes n})^\bullet)) = \OO^n$,
  where $\OO^n$ is the $n$-fold product of the category $\OO$ of totally ordered finite sets.
\end{prp}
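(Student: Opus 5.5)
By the Corollary to the isolability-line Proposition, $\Pi((\RR^n)^{\bullet}/(K^{\otimes n})^{\bullet}) = (L^{\otimes n})^{\bullet}$. So it suffices to compute the unital Ran space of the purely combinatorial isolability space $(L^{\otimes n})^{\bullet}$. As noted after the definition, the Ran space sees only the $2$-skeleton. It is also computed fibrewise over the section $\FF^{\op} \inclusion \DD^{\hop}$, $\angs{m} \mapsto \angs{\ol{m}\cdot\ul{1}}$. The plan is therefore to identify, for each finite set $\angs{m}$, the fibre of $(L^{\otimes n})$ over the complete irreflexive cograph $\angs{\ol{m}\cdot\ul{1}}$, and then to identify the transition functors along maps of $\FF^{\op}$.

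First treat the case $n=1$. An object of $L^{\ol{m}\cdot\ul{1}}$ is a dispersive map $\angs{\ol{m}\cdot\ul{1}} \to S\angs{\gamma}$ with $\angs{\gamma}$ an oriented apartness relation on $\angs{m}$. Since the source is already complete, $S\angs{\gamma}$ must be complete. This means $\angs{\gamma} = \angs{\ul{1}\ordrightsum\cdots\ordrightsum\ul{1}}$ is a strict total order on $\angs{m}$, so the fibre is the discrete set of total orders on $\angs{m}$. This matches the fact that configurations of $m$ distinct points of $\RR$ have contractible components indexed by orderings.

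Next, compute morphisms in $\XX^{\hop}$ lying over a map $f\colon \angs{m'}\to\angs{m}$ in $\FF$, viewed as a map in $\FF^{\op}$. In $\DD^{\hop}$ such a morphism is a span whose backward leg is accretive and whose forward leg is dispersive. Here the accretive leg is the cartesian pullback $f^{*}\angs{\ol{m}\cdot\ul{1}}$, which is $\angs{\ol{m'}\cdot\ul{1}}$ only after the dispersive completion. Using the cartesian lift in $\DD^{[1]}_{\leq 2}$ along accretives and then the envelope's dispersive functoriality, I expect a morphism from $(\angs{m},<)$ to $(\angs{m'},<')$ to exist, uniquely, exactly when $f$ is weakly monotone for the orders. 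Concretely: pulling a total order back along $f$ gives an oriented apartness relation, whose fibres are the incomparable clusters, and $<'$ must refine it. Unitality, i.e.\ allowing non-surjective $f$, comes from the $\ul{1}$-summands corresponding to points outside the image. Checking that composition matches composition of monotone maps, and orienting directions correctly, yields $\Ran^u(L^{\bullet}) \simeq \OO$, possibly up to an $\op$ absorbed by the conventions of $\FF^{\op}$.

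For general $n$, use the description of the Day convolution from the section on products. By the cofinality statement there, the fibre of $L^{\otimes n}$ over $\angs{\ol{m}\cdot\ul{1}}$ is a colimit over $n$-tuples of cographs $\angs{\mu_i}$ on $\angs{m}$ whose edges jointly cover the complete graph, of $\prod_i L^{\mu_i}$. Over each such tuple the fibres are tuples of oriented apartness relations. The claim to establish is that the colimit is the discrete set of $n$-tuples of total preorders (equivalently, objects of $\OO$ over the coordinate images) that are jointly separating. Passing to the Ran space's unital version should then reorganise these into $\OO^n$, with coordinatewise monotone maps.

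The main obstacle is this last step: showing that the Day-convolution colimit in the $2$-skeletal range collapses to the product $\OO^n$, rather than merely to a localisation of it. I would first try to exhibit a cofinal subcategory of the indexing category on which the diagram is constant, then check the $n=2$, $m=2$ case by hand against the $S^{1}$-worth of configurations.
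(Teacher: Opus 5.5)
Your reduction to $(L^{\otimes n})^{\bullet}$ via the Corollary is the route the paper indicates; it gives no argument beyond pointing to that combinatorial description. Your $n=1$ analysis is sound. The fibre over $\angs{m}$ is the discrete set of strict total orders, and a morphism over $f\colon\angs{m'}\to\angs{m}$ exists, uniquely, iff $f$ is monotone. Two small corrections there:
\begin{itemize}
\item This gives $\OO^{\op}$, and since $\OO\not\simeq\OO^{\op}$ the $\op$ is not absorbed by conventions.
\item A non-surjective $f$ simply forgets the points of $\angs{m}$ outside its image. No $\ul{1}$-summands are involved.
\end{itemize}

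The genuine gap is the passage to $n\geq 2$, and the claim you set out to prove there is false. Morphisms of $\XX^{\hop}$ lying over the identity span are exactly morphisms in the fibre. So the fibre of $\Ran^u$ over $\angs{m}$ is the whole fibre of $L^{\otimes n}$ over $\angs{\ol{m}}_{\textit{irr}}$. Unwinding the Day convolution as you describe, this is the poset of $n$-tuples of total preorders on $\angs{m}$ that jointly separate points, ordered by refinement. It is not discrete. For $n=m=2$ it has eight elements: four axis-aligned configurations, each below two of the four quadrant configurations. Its nerve is a circle, and that is where the $S^1$ you want to check against actually lives.

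Its objects are subsets $S\subseteq T_1\times\cdots\times T_n$ surjecting onto each $T_i\in\OO$, not tuples $(T_1,\dots,T_n)$. For instance, the diagonal and antidiagonal of $[1]\times[1]$ are non-isomorphic objects. So the product formula $\Ran^u(L^{\otimes n})\simeq\Ran^u(L)^n$ that your last step needs fails: $\otimes$ asks for separation in \emph{some} coordinate, not in every one.

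No other argument can close this step either. Take $n=2$. The objects of $\Ran^u$ with exactly three endomorphisms are the four two-point configurations: vertical, horizontal, diagonal and antidiagonal. Their endomorphisms are the identity and the two maps that collapse to one point and re-split. Any two of these objects admit at least two morphisms between them. In $\OO^2$ or $(\OO^{\op})^2$, by contrast, there are only two objects with three endomorphisms, namely $([1],[0])$ and $([0],[1])$, unless $\varnothing\in\OO$. In that case $([1],\varnothing)$ and $(\varnothing,[1])$ also qualify, but $([1],[0])$ and $([1],\varnothing)$ have no morphism between them in one direction. So the computation you outline, done correctly, identifies $\Ran^u$ with a category of such grid configurations rather than with $\OO^n$. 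The statement as written cannot be reached by this route or any other; it has to be reformulated first.
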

By changing the stratification along $(K^{\otimes n})^{\bullet} \to K^{\bullet}$,
we recover Cepek's identification \cite[Th. 1.2.1]{cepekcombinatorics}:
\[
  \Ran^u(\Pi((\RR^n)^\bullet/K^\bullet)) = \OO^{(n)} \comma
\]
where $\OO^{(n)}$ is the free $n$-monoidal category on an $n$-monoid --
or, equivalently, the $n$-fold wreath product of $\OO$ with itself, with bijections inverted \cite{MR3784514}.


\section{Twofold symmetric monoidal structures}%
\label{sec:twofold}
Let $\angs{\lambda} \mapsto X^{\lambda}$ be an isolability space.
Let us imagine that we are interested in a theory of \enquote{sheaves}, which are organized into a diagram of symmetric monoidal categories $T \mapsto \AA(T)$.
What structure does the isolability structure on $X$ induce on the symmetric monoidal category $\AA(X)$?

The easy answer is that we have a diagram $\angs{\lambda} \mapsto \AA(X^{\lambda})$ of symmetric monoidal categories.
This structure arises from \emph{right external} tensor products
\[
  \rightboxtensor \colon \AA(X^{\lambda}) \times \AA(X^{\mu}) \to \AA(X^{\lambda \leftsum \mu}) \semicolon
\]
the internal tensor product on $\AA(X^{\lambda})$ arises from restricting the right external product along the diagonal $X^{\lambda} \to X^{\lambda \leftsum \lambda}$.

But this structure smuggles a different structure into our story:
by restricting along our maps $X^{\lambda \rightsum \mu} \to X^{\lambda \leftsum \mu}$, we obtain \emph{left} external tensor products
\[
  \leftboxtensor \colon \AA(X^{\lambda}) \times \AA(X^{\mu}) \to \AA(X^{\lambda \rightsum \mu}) \period
\]
Factorization algebras are defined with respect to these left external tensor products.

In order to explain how these two external products interact, we will understand them as a \enquote{twofold symmetric monoidal structure} on the diagram $\angs{\lambda} \mapsto \AA(X^{\lambda})$.

Roughly speaking, a twofold symmetric monoidal structure on a category $C$ is a pair of symmetric monoidal structures $\lefttensor,\righttensor$ that are required to share a unit and to have \emph{intertwiner maps}
\[
  (U \righttensor V) \lefttensor (X \righttensor Y) \to (U \lefttensor X) \righttensor (V \lefttensor Y) \comma
\]
natural in $U$, $V$, $X$, and $Y$.
In the $1$-categorical context, these were defined by Balteanu, Fiedorowicz, Schwänzl, and Vogt \cite{MR1982884}.
These are special cases of \emph{duoidal} $1$-categories \cite{MR3040601}, in which the units of the monoidal structures are not required to coincide.
Takeshi Torii \cite{MR4899410} described the fully homotopical theory of duoidal categories.

For our purposes, it's technically more convenient to give a general, direct definition.
We emphasize that we are just outlining some aspects of the theory of \emph{twofold algebra} here;
a more complete development will appear elsewhere.

\subsection{Fibrations of cographs \& indexed sums}%
\label{sub:fibrations}
Before we discuss the general theory of twofold symmetric monoidal structures, let us consider how the category of cographs itself provides an example.

Let $\angs{\lambda}$ be a reflexive cograph, and
for each vertex $a \in V\angs{\lambda}$, let $\angs{\mu_a}$ be a cograph.
We can define an \emph{indexed sum}
\[
  \angs{\mu} = \bigoplus_{a \in \angs{\lambda}} \angs{\mu_a}
\]
inductively by cograph depth:
if $\angs{\lambda} = \angs{\ol{1}}$, then we let $\angs{\mu} = \angs{\mu_1}$;
if $\angs{\lambda} = \angs{\lambda_1 \leftsum \lambda_2}$, then we let
\[
  \angs{\mu} = \bigoplus_{a \in \angs{\lambda_1}} \angs{\mu_a} \leftsum \bigoplus_{b \in \angs{\lambda_2}} \angs{\mu_b} \semicolon
\]
and if $\angs{\lambda} = \angs{\lambda_1 \rightsum \lambda_2}$, then we let
\[
  \angs{\mu} = \bigoplus_{a \in \angs{\lambda_1}} \angs{\mu_a} \rightsum \bigoplus_{b \in \angs{\lambda_2}} \angs{\mu_b} \period
\]

Thus the two distinct sums $\leftsum$ and $\rightsum$ on cographs can be seen as special cases of this single indexed sum.
This is the model for our definition of twofold symmetric monoidal categories in general:
rather than specifying two distinct symmetric monoidal structures and their compatibilities, we instead codify twofold structures in terms of a single indexed direct sum.

In the category of finite sets, a map $\angs{m} \to \angs{n}$ exhibits $\angs{m}$ as the coproduct
\[
  \angs{m} = \coprod_{i \in \angs{n}} \angs{m_i}
\]
of the various fibers.
The analogous statement for indexed sums of cographs isn't quite correct
(think of the dispersive map $\angs{\ol{1} \leftsum \ol{1}} \to \angs{\ol{2}}$),
but it's not difficult to specify the maps for which this is true.

Call a map $\angs{\mu} \to \angs{\lambda}$ of cographs a \emph{fibration} iff the pullback to the maximal irreflexive subcograph
\[
  \angs{\mu} \times_{\angs{\lambda}} \angs{\lambda}_{\textit{irr}} \to \angs{\lambda}_{\textit{irr}}
\]
is accretive.
If $\angs{\lambda}$ is reflexive, then every fibration $\angs{\mu} \to \angs{\lambda}$ exhibits $\angs{\mu}$ as the indexed sum 
\[
  \angs{\mu} = \bigoplus_{a \in \angs{\lambda}} \angs{\mu_a} \period
\]

\subsection{Partial maps of cographs}%
\label{sub:LambdaGG}
In order to define the indexing categories for twofold symmetric monoidal structures, we will need to enlarge the category of cographs by including maps of cographs that are not defined everywhere.
The analogy here is with the passage from the category $\FF$ of finite sets to the category $\Lambda(\FF)$ of pointed finite sets --
or, equivalently, finite sets and partially-defined maps.

Let $\Lambda(\GG)$ be the category in which an object is a cograph, and
a morphism is a \emph{partial map} $\angs{\lambda} \partto \angs{\mu}$ of cographs that is defined on an induced subgraph $\angs{\lambda'} \subseteq \angs{\lambda}$.
That is, if $\GG^{\dag} \subset \GG$ is the wide subcategory consisting of accretive injections of cographs, then
$\Lambda(\GG)$ is the span category $\Span(\GG;\GG,\GG^{\dag})$.
Equivalently, $\Lambda(\GG)$ is the Kleisli category of the monad on $\GG$ given by $\angs{\mu} \mapsto \angs{\mu \rightsum \overline{1}}$.
(It is an example of the \enquote{Leinster category} of a perfect operator category, in the sense of \cite{MR3784514}.)

In the same way, we define wide subcategories $\DD^{\dag}$ and $\EE^{\dag}$ of accretive injections,
and we have categories of partial maps
\[
  \Lambda(\DD) = \Span(\DD;\DD,\DD^{\dag}) \andeq
  \Lambda(\EE) = \Span(\EE;\EE,\EE^{\dag}) \period
\]
These are full subcategories of $\Lambda(\GG)$.

We call a partial map of cographs \emph{inert} if it lies in $\GG^{\dag,\op} \subset \Lambda(\GG)$.
For example, given a reflexive cograph $\angs{\lambda}$ and a vertex $a \in V\angs{\lambda}$, we obtain an inert partial map $\chi_a \colon \angs{\lambda} \to \angs{\ol{1}}$;
these maps appear in our definition of twofold commutative structures in the next subsection.
We call a partial map \emph{active} if it lies in $\GG \subset \Lambda(\GG)$.
These give an orthogonal factorization system on $\Lambda(\GG)$.

The categories $\Lambda(\DD)$, $\Lambda(\EE)$, and $\Lambda(\GG)$ all lie over the category $\Lambda(\FF)$ of finite sets and partial maps.
Various natural functors over $\FF$ among $\DD$, $\EE$, $\GG$, and $\FF$ induce functors among their \enquote{partial} variants.

For example, we have two fully faithful functors $\FF \inclusion \EE$.
The first is the inclusion $\FF = \DD_{\leq 1} \inclusion \GG$ followed by the formation of the reflexive hull:
$\angs{n} \mapsto \angs{\underline{n}}_{\textit{refl}}$.
The second is the inclusion $\FF = \EE^{\leq 1} \inclusion \EE$, given by the assignment $\angs{n} \mapsto \angs{\overline{n}}$.
These extend to fully faithful left and right adjoints of the functor $\Lambda(\EE) \to \Lambda(\FF)$.
These two functors will appear below (\cref{sub:Twofold});
they are how one extracts the underlying symmetric monoidal structures from twofold symmetric monoidal structures in our framework below.

\subsection{Twofold symmetric monoidal bicategories}%
\label{sub:Twofold}
To deal with the coherences that arise, a small amount of \emph{bicategory} theory appears.%
\footnote{In keeping with our conventions, by \enquote{bicategory} we mean \enquote{$(\infty,2)$-category}.}
We follow terminology and constructions from the following references: \cite{MR4305242,MR4498545,MR4519636,MR4845972,abellan}.
Here are some examples of bicategories that will be relevant for us:
\begin{itemize}
  \item[$\Cat$,] the bicageory of categories,
  \item[$\biCat$,] the bicategory of bicategories,
  \item[$\Pr^L$,] the bicategory presentable categories,
  \item[$\Mod(\AA)$,] the bicategory of $\AA$-modules for a monoidal presentable category $\AA$, relative to the Lurie tensor product.
\end{itemize}

The basic strategy is to define twofold commutative structures first relative to cartesian products, and then use this to define them more generally.
So, for a bicategory $\MM$ with finite products, such as $\Cat$ or $\biCat$,
we define \enquote{twofold commutative monoids} in $\MM$.
This gives us access to \enquote{twofold symmetric monoidal categories} and \enquote{twofold symmetric monoidal bicategories}.
Every symmetric monoidal (bi)category is automatically a twofold symmetric monoidal (bi)category in which the two tensor products coincide.
Next, given a twofold symmetric monoidal bicategory $\CC^{\lefttensor,\righttensor}$, we will define a \enquote{twofold commutative monoid}.
For instance, a \enquote{twofold symmetric monoidal presentable category} is a twofold commutative monoid in $\Pr^{L,\otimes,\otimes}$.

Let $\MM$ be a bicategory with finite products.
A twofold commutative monoid of $\MM$ will be a normalized oplax functor 
\[
  C^{\lefttensor,\righttensor} \colon \Lambda(\EE) \to \MM
\]
satisfying certain conditions.
We will write $C$ for the value of this normalized oplax functor on the singleton $\angs{\overline{1}}$.
Note that $C^{\lefttensor,\righttensor}$ restricts, via the two inclusions $i,j \colon \Lambda(\FF) \inclusion \Lambda(\EE)$, to two normalized oplax functors
\[
  C^{\lefttensor} \coloneq i^{\ast} C^{\lefttensor,\righttensor} \colon \Lambda(\FF) \to \MM \andeq
  C^{\righttensor} \coloneq j^{\ast} C^{\lefttensor,\righttensor} \colon \Lambda(\FF) \to \MM \comma
\]
which are connected by a natural transformation $C^{\lefttensor} \to C^{\righttensor}$.

There will be two conditions we impose on our normalized oplax functor $C^{\lefttensor,\righttensor}$;
one of these is the usual Segal condition, but
the other is more subtle.
It is the requirement that $C^{\lefttensor,\righttensor}$ carry certain $2$-simplices in $\Lambda(\EE)$ to invertible $2$-morphisms in $\MM$.
We now need to identify those $2$-simplices.

Let $\sigma$ be a $2$-simplex of $\Lambda(\EE)$ consisting of a partial map $\angs{\lambda} \supseteq \angs{\lambda'} \to \angs{\mu}$, a partial map $\angs{\mu} \supseteq \angs{\mu'} \partto \angs{\nu}$, and their composite $\angs{\lambda} \supseteq \angs{\lambda''} \to \angs{\nu}$.
We now say that $\sigma$ is \emph{thin} iff the induced map $\angs{\lambda''} \to \angs{\mu'}$ is a fibration.

Now our normalized oplax functor $C^{\lefttensor,\righttensor}$ is a \emph{twofold commutative monoid of $\MM$} iff it satisfies the following conditions:
\begin{enumerate}
  \item Every thin $2$-simplex of $\Lambda(\EE)$ is carried to an invertible $2$-morphism in $\MM$.
 \item For every reflexive cograph $\angs{\lambda}$, the Segal map
    \[
      C^{\lefttensor,\righttensor}_{\lambda} \to \prod_{a \in V\angs{\lambda}} C = C^{V\angs{\lambda}}
    \]
    induced by the inert partial maps $\chi_a \colon \angs{\lambda} \partto \angs{\overline{1}}$ is an equivalence.
\end{enumerate}

The objects $C^{\lefttensor}$ and $C^{\righttensor}$ are then commutative monoid objects of $\MM$,
which we call the \emph{left commutative monoid structure} and the \emph{right commutative monoid structure} on $C$.
These structures come with an \emph{intertwiner} natural transformation
\[
  \nu \colon (U \righttensor V) \lefttensor (X \righttensor Y) \to (U \lefttensor X) \righttensor (V \lefttensor Y) \period
\]
induced by the active map of cographs 
\[
  \angs{(\overline{1} \rightsum \overline{1}) \leftsum (\overline{1} \rightsum \overline{1})} \to \angs{(\overline{1} \leftsum \overline{1}) \rightsum (\overline{1} \leftsum \overline{1})} \period
\]
They also share a unit, which is given by the functor $\{1\} = C^{\lefttensor,\righttensor}_{0} \to C$ induced by the unique active map from the empty cograph to the singleton.
We thus also have a map of commutative monoids $C^{\lefttensor} \to C^{\righttensor}$.
In the same manner, all the coherences of the theory are extracted from diagrams in the category $\EE$.

\subsection{Examples}%
\label{sub:ExamplesTwofold}
We call twofold commutative monoids in the bicategory $\MM = \Cat$ \emph{twofold symmetric monoidal categories}.
In this case, it is helpful to recast the normalized oplax functor $C^{\lefttensor,\righttensor}$ as a locally cocartesian fibration
\[
  \CC^{\lefttensor,\righttensor} \to \Lambda(\EE) \period
\]
Then our conditions become:
\begin{enumerate}
  \item In every square
    \[
      \begin{tikzcd}[sep=1.5em, ampersand replacement=\&]
        \Delta^{\{0,1\}} \arrow[r, "f"] \arrow[d] \& \CC^{\lefttensor,\righttensor} \arrow[d] \\
        \Delta^2 \arrow[r, "\sigma"']  \& \Lambda(\EE) 
      \end{tikzcd}
    \]
    in which $\sigma$ is thin, the morphism $f$ is cocartesian in $\CC^{\lefttensor,\righttensor} \times_{\Lambda(\EE)} \Delta^2$.
  \item The Segal maps $\CC^{\lefttensor,\righttensor}_{\lambda} \to \CC^{u\angs{\lambda}}$ are equivalences.
\end{enumerate}

Similarly, \emph{twofold symmetric monoidal bicategories} are twofold commutative monoid objects of $\biCat$.
Thanks to the straightening/unstraightening equivalences of Fernando Abellán \cite{abellan}, these can be presented as local $(0,1)$-fibrations over $\Lambda(\EE)$ satisfying the three conditions above.

If $A^{\otimes}$ is a symmetric monoidal category, we can pull it back along the forgetful functor $u \colon \Lambda(\EE) \to \Lambda(\FF)$ to obtain a twofold symmetric monoidal category $A^{\otimes,\otimes}$.

A twofold symmetric monoidal $1$-category in the sense of \cite{MR1982884} is automatically a twofold symmetric monoidal category.
The categories $\GG$, $\DD$, and $\EE$ are thus all examples.

Let's describe $\GG^{\leftsum, \rightsum} \to \Lambda(\EE)$ explicitly.
Let $\Fib(\GG)$ be the full subcategory of $\Arr(\GG)$ consisting of fibrations
$\angs{\nu} \fibration \angs{\lambda}$
with $\angs{\lambda}$ reflexive.
Now
\[
  \GG^{\leftsum,\rightsum} = \Span(\Fib(\GG), \Fib(\GG), \Fib(\GG)^{\dag}) \comma
\]
where $\Fib(\GG)^{\dag}$ consists of pullback squares
\[
  \begin{tikzcd}[sep=1.5em, ampersand replacement=\&]
    \angs{\nu'} \arrow[r, "f'"] \arrow[d, ->>] \& \angs{\nu} \arrow[d, ->>] \\
    \angs{\lambda'} \arrow[r, "f"']  \& \angs{\lambda} 
  \end{tikzcd}
\]
in which $f$ and (thus) $f'$ are accretive injections.
The locally cocartesian fibration $\GG^{\leftsum,\rightsum} \to \Lambda(\EE)$ is induced by the target functor $\Fib(\GG) \to \EE$.
Now $\DD^{\leftsum,\rightsum}$ and $\EE^{\leftsum,\rightsum}$ are the full subcategories spanned by those fibrations 
$\angs{\nu} \fibration \angs{\lambda}$
in which the source $\nu$ is irreflexive and reflexive, respectively.

\subsection{Twofold monoids in twofold symmetric monoidal categories}%
\label{sub:TwofoldinTwofold}
Let $\CC^{\lefttensor,\righttensor} \to \Lambda(\EE)$ be a twofold symmetric monoidal bicategory, presented as a local $(0,1)$-fibration.
A \emph{twofold commutative monoid} in $\CC^{\lefttensor,\righttensor}$ is a section $\Lambda(\EE) \to \CC^{\lefttensor,\righttensor}$ that carries inert partial maps to cocartesian morphisms.

A twofold commutative monoid in $\CC^{\lefttensor,\righttensor}$ is thus an object $X = X_1 \in C$ along with two commutative monoid structures $\lefttensor \colon X \lefttensor X \to X$ and $\righttensor \colon X \righttensor X \to X$ with a common unit $\iota \colon \mathbf{1}_C \to X$.
These two structures are compatible via the intertwiner map $\nu$. 

We write $\CAlg^{(2)}(\CC^{\lefttensor,\righttensor})$ for the bicategory of twofold commutative monoids in $\CC^{\lefttensor, \righttensor}$.

If $\MM$ is a bicategory with finite products, then one shows in the usual way that a twofold commutative monoid in the twofold symmetric monoidal category $\MM^{\times,\times}$ is essentially the same thing as a twofold commutative monoid in $\MM$ in the sense of \cref{sub:Twofold}.

\subsection{Twofold Day convolution}%
\label{sub:TwofoldDay}
Let $A^{\lefttensor,\righttensor}$ be a twofold symmetric monoidal category.
On $\Fun(A,\Cat)$, there is a twofold symmetric monoidal structure, in which each of the two symmetric monoidal structures $\lefttensor$ and $\righttensor$ is obtained by Day convolution with respect to $\righttensor$ and $\lefttensor$ (respectively -- note the reversal!) on $A$.
To construct this two fold symmetric monoidal structure precisely, we follow a clever strategy of Hadrian Heine \cite[\S 6.1]{heinethesis}.

Consider the bicategory $\categ{Cocart}$ of cocartesian fibrations of categories.
This is symmetric monoidal under product, and there is a symmetric monoidal functor $\categ{Cocart}^{\times} \to \Cat^{\times}$ given by passage to the target.
We can then pull this back to a twofold symmetric monoidal functor $\categ{Cocart}^{\times,\times} \to \Cat^{\times,\times}$, which is a local $(0,1)$-fibration.
The two fold symmetric monoidal category $A^{\lefttensor,\righttensor}$ can then be expressed as a section $\Lambda(\EE) \to \Cat^{\times,\times}$.
Pulling back $\categ{Cocart}^{\times,\times} \to \Cat^{\times,\times}$ along this section, we obtain a local $(0,1)$-fibration
\[
  \Fun(A,\Cat)^{\lefttensor,\righttensor} \coloneq
  \Lambda(\EE) \times_{\Cat^{\times,\times}} \categ{Cocart}^{\times,\times} \to \Lambda(\EE) \comma
\]
which is a twofold symmetric monoidal bicategory.

It follows from Heine's work that the two symmetric monoidal structures $\lefttensor$ and $\righttensor$ on $\Fun(A,\Cat)$ are given by Day convolutions.

\subsection{Comments \& questions}%
\label{sub:comments-twofold}
Here are some basic statements: 
\begin{itemize}
  \item $\EE$ is the free twofold symmetric monoidal category supporting a twofold commutative monoid.
  \item $\DD$ is the free twofold symmetric monoidal category supporting a commutative monoid for the left tensor product $\leftsum$.
  \item $\DD^{\disp}$, which is equivalent to $\EE^{\disp}$ (via the functor $\angs{\lambda} \mapsto \angs{\lambda}_{\textit{refl}}$) and also to $\DD^{\disp,\op}$ and $\EE^{\disp,\op}$ (via negation), is the free twofold symmetric monoidal category generated by a single object.
    (This observation is due to Malthe Sporring.)
\end{itemize}
These statements can be certainly proved with the definitions we give here, but that isn't surprising:
our definitions more or less bake these assertions into the theory.

A more satisfying account of these structures will develop a theory even of \enquote{$m \choose n$-monoidal categories} --
\ie, $E_m$-monoids in the $2$-category $\categ{Mon}^{(n),\textit{oplax}}\categ{Cat}$ of $n$-monoidal categories and normal oplax $n$-monoidal functors --
from first principles, without direct reliance on the combinatorics of cographs.
For $1$-categories, we already have the work of Balteanu, Fiedorowicz, Schwänzl, and Vogt \cite{MR1982884}.
For general categories, Takeshi Torii developed the more general notion of \emph{duoidal categories} \cite{MR4868047}, and
we expect that $1 \choose 1$-monoidal categories are duoidal categories in which the units coincide. 
It's more or less clear how to adapt Torii's definitions to give a reasonable definition of $m \choose n$-monoidal categories.
A confirmation of our suggested universal properties in that context would amount to a confirmation that our theory is the correct one.

Pushing this story further, can the theory of operator categories \cite{MR3784514} be extended to handle categories like $\DD$ and $\EE$?
A theory of \emph{twofold operator categories}, which could then index all sorts of twofold monoidal structures, would seem particularly interesting.
One supposes that the category $\EE$ should be the terminal twofold operator category.
One can imagine even more general iterated monoidal structures with concomitant theories of iterated operator categories.

We have already seen noncommutative variants of the theory of cographs.
In \cref{sub:1cographs} we defined \emph{$1$-cographs}, the directed version of cographs.
The category $\DD^{[1]}$ of irreflexive $1$-cographs will be the free $\infty \choose 1$-monoidal category generated by a commutative monoid for the left tensor product.
Work of Willow Bevington \& Malthe Sporring seeks to construct various universal $m \choose n$-monoidal categories in similar combinatorial terms.


\section{Factorization algebras}%
\label{sec:factorization}
We arrive at our main construction.
We describe a quite general recipe for defining the category of factorization algebras, and
we give two examples.

Before that, however, we follow that same recipe with one fewer symmetric monoidal structure;
the result is exactly the category of sheaves on the Ran space.

\subsection{Sheaves on the Ran space}%
\label{sub:sheavesonRan}

A symmetric monoidal category is the same thing as a symmetric monoidal functor $\FF \to \Cat$.
Accordingly, we define a \emph{lax symmetric monoidal category}%
\footnote{This is not compatible with other uses of this phrase in the literature.}
to be a lax symmetric monoidal functor $\FF \to \Cat$.
Equivalently, a lax symmetric monoidal category is simply a functor $\FF \to \CAlg(\Cat)$.
These form a bicategory $\CAlg^{\lax}(\Cat)$, which contains $\CAlg(\Cat)$. 

We can endow $\Fun(\FF,\Cat)$ with its Day convolution symmetric monoidal structure $\otimes$.
The unit for this symmetric monoidal structure is the constant diagram at the trivial category $1_{\bullet}$.
Commutative algebras for this symmetric monoidal structure are exactly lax symmetric monoidal categories:
\[
  \CAlg^{\lax}(\Cat) = \CAlg(\Fun(\FF,\Cat)) \period
\]

Every lax symmetric monoidal category $\VV_{\bullet}$ restricts to a functor $\VV_{s,\bullet} \colon \FF_s \to \Cat$.
A \emph{global section} of a lax symmetric monoidal category $\VV_{\bullet}$ is a natural transformation $1_{s,\bullet} \to \VV_{s,\bullet}$.
These form a symmetric monoidal category
\[
  \Gamma(\VV_{\bullet}) \coloneq \HOM_{\Fun(\FF_s,\Cat)}(1_{s,\bullet}, \VV_{s,\bullet}) = \lim_{\angs{n} \in \FF_s} \VV_n \period
\]

Let $\XX$ be a category with finite products, whose objects we think of as in some sense geometric.
Often $\XX$ will be -- or can be enlarged to be -- a topos.
We also imagine some \emph{sheaf theory},
which we give as a lax symmetric monoidal functor $\AA \colon \XX^{\op} \to \Cat$,
or, equivalently, a functor $\AA \colon \XX^{\op} \to \CAlg(\Cat)$.

Consider a functor $X^{\bullet} \colon \FF^{\op} \to \XX$, which is automatically colax symmetric monoidal for the coproduct on $\FF$ and the product on $\XX$.
If $X^{\bullet}$ is actually symmetric monoidal, then it is uniquely specified by $X = X^1$, which is automatically a commutative comonoid.
In any case, we now obtain a lax symmetric monoidal category $\AA(X^{\bullet})$.
Its global sections are now precisely sheaves on the Ran space
\[
  \Gamma(\AA(X^{\bullet})) = \AA(\Ran X) \comma
\]
where $\Ran X = \colim_{\angs{n} \in \FF_s} X^n$
is defined formally as needed.

To obtain sheaves on the \emph{unital} Ran space, one can replace $\FF_s$ in this discussion with the category $\FF_{>0}$ of nonempty finite sets.

\subsection{Parallax symmetric monoidal categories}%
\label{sub:parallax}
Regard $\Cat$ as a twofold symmetric monoidal category $\Cat^{\times, \times}$ with the diagonal twofold structure given by the product.
A symmetric monoidal category is now the same thing as a twofold symmetric monoidal functor $\DD^{\leftsum,\rightsum} \to \Cat^{\times,\times}$.
Accordingly, we define a \emph{parallax symmetric monoidal category} as a lax twofold symmetric monoidal functor $\DD^{\leftsum,\rightsum} \to \Cat^{\times,\times}$.
This is the same thing as a lax symmetric monoidal functor $\DD^{\leftsum} \to \Cat^{\times}$,
or indeed a functor $\DD \to \CAlg(\Cat)$.
These form a bicategory $\CAlg^{\textit{parallax}}(\Cat)$.

Since $\DD$ is a twofold symmetric monoidal category, 
the bicategory $\Fun(\DD,\Cat)$ has its twofold Day convolution symmetric monoidal structure.
The left symmetric monoidal structure is given by the Day convolution
\[
  (\VV \lefttensor \WW)_{\lambda} = \colim_{\angs{\mu \rightsum \nu} \to \angs{\lambda}} \VV_{\mu} \times \WW_{\nu} \comma
\]
and since the disconnected sum $\leftsum$ is the coproduct in $\DD$, the right symmetric monoidal structure is given by the pointwise tensor product
\[
  (\VV \righttensor \WW)_{\lambda} = \VV_{\lambda} \times \WW_{\lambda} \period
\]
The unit in $\Fun(\DD,\Cat)$ is the constant functor $1_{\bullet}$ at the contractible category.

Hence a parallax symmetric monoidal category is precisely the same thing as a commutative monoid for $\righttensor$,
which in turn is also precisely the same thing as a twofold commutative monoid in $\Fun(\DD,\Cat)^{\lefttensor,\righttensor}$:
\[
  \CAlg^{\textit{parallax}}(\Cat) = \CAlg(\Fun(\DD,\Cat)^{\righttensor}) = \CAlg^{(2)}(\Fun(\DD,\Cat)^{\lefttensor,\righttensor}) \period
\]
A parallax symmetric monoidal category $\VV_{\bullet} \colon \DD \to \Cat$ thus comes with products
\[
  \rightboxtensor \colon \VV_{\lambda} \times \VV_{\mu} \to \VV_{\lambda \leftsum \mu} \comma
\]
which in turn induce products
\[
  \leftboxtensor \colon \VV_{\lambda} \times \VV_{\mu} \to \VV_{\lambda \rightsum \mu} \period
\]

In other words, if we regard $\VV_{\bullet}$ as a section of $\Fun(\DD,\Cat)^{\lefttensor,\righttensor} \to \Lambda(\EE)$, then
we are entitled to pull back along the two functors $i,j \colon \Lambda(\FF) \inclusion \Lambda(\EE)$ to extract the two lax symmetric monoidal functors
\[
  \ul{\VV}_{\bullet} \colon \DD^{\rightsum} \to \Cat^{\times} \andeq 
  \ol{\VV}_{\bullet} \colon \DD^{\leftsum} \to \Cat^{\times} \period
\]
Of course $\ol{\VV}_{\bullet}$ contains all the data of $\VV_{\bullet}$,
but the assignment $\VV_{\bullet} \mapsto \ul{\VV}_\bullet$ is a nontrivial forgetful functor
\[
  \CAlg^{\textit{parallax}}(\Cat) \to \CAlg(\Fun(\DD,\Cat)^{\lefttensor}) \period
\]

A parallax symmetric monoidal category $\VV_{\bullet}$ is \emph{presentable} iff
$\VV_{\bullet}$ lifts to a functor $\DD \to \Pr^L$, and $\leftboxtensor$ and $\rightboxtensor$ each preserve colimits separately in each variable.
In this case, $\VV_{\bullet}$ can also be described as a twofold commutative monoid in $\Fun(\DD,\Pr^L)$ with the twofold Day convolution.%
\footnote{Properly speaking, we have not constructed the twofold Day convolution in this case,
because we wanted to use Heine's trick.}

Let $(\XX,\AA)$ be a pair consisting of a category $\XX$ with finite products
along with a sheaf theory $\AA \colon \XX^{\op} \to \Cat$,
as in \cref{sub:sheavesonRan}.
Now let $X^{\bullet}$ be an isolability object of $\XX$.
Then we obtain a parallax symmetric monoidal category
\[
  \AA(X^{\bullet}) \colon \DD \to \Cat \period
\]
The two different symmetric monoidal structures:
\[
  \leftboxtensor \colon \AA(X^\lambda) \times \AA(X^\mu) \to \AA(X^{\lambda \rightsum \mu}) \andeq
  \rightboxtensor \colon \AA(X^\lambda) \times \AA(X^\mu) \to \AA(X^{\lambda \leftsum \mu}) 
\]
are restrictions of the usual external product $\boxtimes \colon \AA(X^{\lambda}) \times \AA(X^{\mu}) \to \AA(X^{\lambda} \times X^{\mu})$.

\subsection{Factorization algebras}%
\label{sub:factorizationalgebras}
Let $\VV_{\bullet} \colon \DD \to \Cat$ be a parallax symmetric monoidal category.
The key to making sense of factorization structures is now to dispense with the lax symmetric monoidality with respect to $\leftsum$ and remember only the lax symmetric monoidality with respect to $\rightsum$.
Thus $\VV_{\bullet}$ restricts to a functor $\ul{\VV}_{s,\bullet} \colon \DD_s \to \Cat$,
which we regard as a nonunital lax symmetric monoidal functor $\DD_s^{\rightsum} \to \Cat^{\times}$ --
or equivalently as a nonunital commutative monoid in $\Fun(\DD_s,\Cat)^{\lefttensor}$.
The assignment $\VV_\bullet \mapsto \ul{\VV}_{s,\bullet}$ is a functor
\[
  \CAlg^{(2)}(\Fun(\DD,\Cat)^{\lefttensor,\righttensor}) \to \CAlg^{\textit{nu}}(\Fun(\DD_s,\Cat)^{\lefttensor}) \period
\]

A \emph{global section} of $\VV_\bullet$ is now a nonunital commutative algebra map
\[
  F_\bullet \colon 1_{s,\bullet} \to \ul{\VV}_{s,\bullet} \period
\]
These form a symmetric monoidal category
\[
  \Gamma(\VV_{\bullet}) = \HOM_{\CAlg^{\textit{nu}}(\Fun(\DD_s,\Cat)^{\lefttensor})}(1_{s,\bullet}, \ul{\VV}_{s,\bullet}) \period
\]

Thus a global section specifies:
\begin{itemize}
  \item for every nonempty cograph $\angs{\lambda}$, an object $F_{\lambda} \in \VV_{\lambda}$,
  \item for every surjection $i \colon \angs{\lambda} \to \angs{\mu}$, an identification $i^{\ast}F_{\lambda} = F_{\mu}$, and
  \item for every pair of cographs $\angs{\lambda}$ and $\angs{\mu}$, an identification $F_{\lambda \rightsum \mu} = F_{\lambda} \leftboxtensor F_{\mu}$ in $\VV_{\lambda \rightsum \mu}$.
\end{itemize}
If $\VV_{\bullet}$ is presented as a symmetric monoidal cocartesian fibration $\VV \to \DD$, then
a factorization algebra is precisely the same thing as a nonunital symmetric monoidal cocartesian section $F_{\bullet} \colon \DD_s \to \VV$.

The global sections of a parallax symmetric monoidal category $\VV_\bullet$ is related to
the global sections of the lax symmetric monoidal category (\cref{sub:sheavesonRan}) given by the restriction $\VV_{\bullet}|\FF$
by a symmetric monoidal functor $\Gamma(\VV_{\bullet}) \to \Gamma(\VV_\bullet|\FF)$.
In general, this functor is not conservative.

Let $(\XX,\AA)$ be a pair consisting of a category $\XX$ with finite products along with a sheaf theory $\AA \colon \XX^{\op} \to \Cat$, as in \cref{sub:parallax}.
Let $X^{\bullet}$ be an isolability object of $\XX$.
Then the global sections of $\AA(X^{\bullet})$ are called \emph{factorization algebras} on $X^{\bullet}$ with coefficients in $\AA$.

\subsection{Factorization algebras in constructible sheaves}%
\label{sub:constructible}

Let $\CC$ be a symmetric monoidal category.
This provides a natural sheaf theory
$X \mapsto \Cnstr(X;\AA) = \Fun(X, \CC)$
on the category of stratified spaces.

For every isolability space $X^{\bullet}$, we have the parallax symmetric monoidal category $\Cnstr(X^{\bullet};\CC)$.
A \emph{constructible factorization algebra} on $X^{\bullet}$ with coefficients in $\CC$ is then a factorization algebra in $\Cnstr(X^{\bullet}; \CC)$.
In the special case that $X^{\bullet}$ is stratified over the canonical isolability poset $K^{\bullet}$ (\cref{sub:isolabilityspaces}), these are called \emph{locally constant factorization algebras}.

Let us characterize the constructible factorization algebras in a fibrational way.
Let $\XX \to \DD$ be the cartesian fibration corresponding to an isolability space $X^{\bullet}$.
We can pull back the cocartesian operad structure on $\XX_s \coloneq \XX \times_{\DD} \DD_s$ along the nonunital operad map $\DD_s^{\rightsum} \to \DD^{\leftsum}$ to obtain an operad:
\[
  \XX_s^{\rightsum} \coloneq \XX^{\sqcup} \times_{\DD^{\leftsum}} \DD_s^{\rightsum} \period
\]
Thus a \enquote{map} $x_1 \rightsum \cdots \rightsum x_n \to y$ in $\XX_s^{\rightsum}$ is a collection of maps $x_i \to y$ of $\XX$ for $1 \leq i \leq n$ that lie over a collection $\angs{\lambda_i} \to \angs{\mu}$ of $\DD$ that fit together into a surjection $\angs{\lambda_1 \rightsum \cdots \rightsum \lambda_n} \to \angs{\mu}$.
Let us call such a \enquote{map} \emph{cartesian} iff which each map $x_i \to y$ is cartesian.

Now a constructible factorization algebra on $X^{\bullet}$ with coefficients in $\CC$ can be described as an operad map $F \colon \XX_s^{\rightsum} \to \CC$ such that for every cartesian \enquote{map} $x_1 \rightsum \cdots \rightsum x_n \to y$,
the induced map
\[
  F(x_1) \otimes \cdots \otimes F(x_n) \to F(y)
\]
is an equivalence.

\subsection{Factorization stacks}%
\label{sub:factorizationstacks}
Let $\XX$ be a topos, and let us consider the pregeometric background $X \mapsto \XX_{/X}$.
This is the most primitive and unstructured setting in which we may consider our constructions.

Let $X^\bullet \colon \DD^{\op} \to \XX$ be an isolability object.
The isolability object $X^{\bullet}$ can be promoted to an isolability topos $\angs{\lambda} \mapsto \XX_{/X^{\lambda}}$ in which all the structure maps are étale morphisms of topoi.
Using the pullback functors in these morphisms of topoi, this defines a parallax symmetric monoidal category.
A factorization algebra in this parallax symmetric monoidal category a \emph{factorization stack} over $X^{\bullet}$.

Let us give a helpful alternative way of describing a factorization stack.
Call a map of nonempty cographs $\angs{\lambda} \to \angs{\mu}$ \emph{attached} iff
every vertex in the image is connected by an edge to every vertex outside the image.
Thus surjective maps of cographs are attached, and
inclusions $\angs{\lambda} \inclusion \angs{\lambda \rightsum \mu}$ are attached.
Let $\DD_{\textit{att}} \subset \DD$ be the subcategory of nonempty irreflexive cographs and attached maps.
An isolability object $X^{\bullet}$ restricts to a functor $X^{\bullet}_{\textit{att}} \colon \DD_{\textit{att}}^{\op} \to \XX$.

A factorization stack over $X^\bullet$ is now a functor $Y^{\bullet} \colon \DD_{\textit{att}}^{\op} \to \XX$ along with a morphism $Y^\bullet \to X^{\bullet}_{\textit{att}}$ such that
\begin{enumerate}
  \item for every surjection $\angs{\lambda} \to \angs{\mu}$, the canonical map
    \[
      Y^{\mu} \to Y^{\lambda} \times_{X^{\lambda}} X^{\mu}
    \]
    is an equivalence, and
  \item for cographs $\angs{\lambda}$ and $\angs{\mu}$, the map
    \[
      Y^{\lambda \rightsum \mu} \to (Y^{\lambda} \times Y^{\mu}) \times_{X^{\lambda} \times X^{\mu}} X^{\lambda \rightsum \mu} 
    \]
    is an equivalence.
\end{enumerate}
(To see that this is an equivalent description, take the target cartesian fibration $\Arr(\XX) \to \XX$, form the dual cocartesian fibration \cite{MR3746613}, and pull this back along $X^{\bullet}$.
Now factorization stacks are symmetric monoidal cocartesian sections of this fibration over $\DD_s$, and this unwinds to the description here.)

In particular, if $X^\bullet$ is additive and $Y^\bullet$ is the restriction of an additive isolability object, then $Y^{\bullet}$ is automatically a factorization stack.
As we shall see in the next subsection, this is not the only way in which factorization stacks arise.

We can now generalize this slightly.
If $U \in \XX$ is an object, then a \emph{factorization groupoid} with object stack $S$
is a functor $Y^{\bullet} \colon \DD_{\textit{att}}^{\op} \to \XX$ along with a morphism $Y^\bullet \to X^{\bullet}_{\textit{att}} \times U \times U$ such that
\begin{enumerate}
  \item for every surjection $\angs{\lambda} \to \angs{\mu}$, the canonical map
    \[
      Y^{\mu} \to Y^{\lambda} \times_{X^{\lambda}} X^{\mu}
    \]
    is an equivalence, and
  \item for cographs $\angs{\lambda}$ and $\angs{\mu}$, the map
    \[
      Y^{\lambda \rightsum \mu} \to (Y^{\lambda} \times_U Y^{\mu}) \times_{X^{\lambda} \times X^{\mu}} X^{\lambda \rightsum \mu} 
    \]
    is an equivalence, where $Y^{\lambda} \to U$ is projection onto the first copy of $U$, and $Y^{\mu} \to U$ is projection onto the second.
\end{enumerate}

\subsection{The Beilinson--Drinfeld Grassmannian in quite a lot of generality}%
\label{sub:heckegrass}

Again let $\XX$ be a topos, and let $X \in \XX$ be an object.
Let $O_X^\bullet \colon \DD^{\op} \to \XX$ be an observer stack with its isolability structure as in \cref{sub:observer}.
As such, $O_X$ is regular (\cref{sub:regularity}) and additive (\cref{sub:additivity}).
Additionally, let us assume that the canonical map $X \to \Obj_X$ factors through $O_X$.

Let $B \in \XX_{/X}$ be any stack over $X$.
We may imagine that $B = BG$ for $G$ a group over $X$, in which case sections of $B \to X$ are $G$-bundles on $X$.
Accordingly, we will call (local or global) sections of $B \to X$ \emph{bundles} on $X$.
These are organized into the moduli stack
\[
  \Bun \coloneq \underline{\Map}_X(X,B) \period
\]
We are going to define a factorization stack over ${\Bun} \times O_X^{\bullet}$ of \emph{modifications} of sections of $B$.

It turns out to be natural to think of moduli stacks of modifications as \enquote{pull-push} expressions.
To explain, let $p \colon U \to X$ and $q \colon U \to Y$.
Attached we have the right adjoint functor
\[
  H_U \coloneq q_{\ast} p^{\ast} \colon \XX_{/X} \to \XX_{/Y} \comma
\]
which carries a stack $B$ over $X$ to the stack over $Y$ given by
\[
  T \mapsto \Map_X(U \times_Y T, B) \period
\]
This is the \emph{pull-push construction} through $U$.

Since pullbacks along étale geometric morphisms satisfy basechange,
the construction $X \mapsto \XX_{/X}$ defines a functor $H \colon \Span(\XX) \to \Pr^{R}$,
where spans are carried to the corresponding pull-push constructions.
In fact, this can be enhanced to a functor of bicategories, and moreover the induced functor
\[
  H \colon (\XX_{/X \times Y})^{\op} \to \Fun^R(\XX_{/X}, \XX_{/Y})
\]
carries colimits to limits.

Now we use the isolability structure on $O_X$ to define a stack $X \odot O_X$ lying over $O_X$ so that,
morally, the fiber over $Z \in O_X$ is $X$ with $Z$ doubled -- \textsc{aka} the \enquote{ravioli space} $X \cup^{X \smallsetminus Z} X$.
The Hecke stack can then be seen as the result of a pull-push construction through $X \odot O_X$.

In fact, we define an isolability stack
\[
  X \odot O_X^{\bullet} \colon \DD^{\op} \to \XX \comma
\]
which carries $\angs{\lambda}$ to
\[
  X \odot O_X^{\lambda} \coloneq X \times_{O_X} (O_X^{1 \leftsum \lambda} \cup^{O_X^{1 \rightsum \lambda}} O_X^{1 \leftsum \lambda}) \period
\]
We have a projection $(p,q^{\lambda}) \colon X \odot O_X^{\lambda} \to X \times O_X^{\lambda}$;
its fibre over a point $(x,Z)$ is the (unreduced) suspension of the object $\enquine{x \cap Z = \varnothing}$.

For a stack $B \in \XX_{/X}$ and an irreflexive cograph $\angs{\lambda}$, we define the \emph{Hecke stack} via the pull-push construction through $X \odot O_X^{\bullet}$:
\[
  \Hecke^{\lambda} = \Hecke^{\lambda}(X,O_X; B) \coloneq H_{X \odot O_X^{\lambda}}(B) = q^{\lambda}_{\ast} p^{\ast} B \in \XX_{/O_X^{\lambda}} \period
\]
Thus a $T$-point of $\Hecke^{\lambda}$ is a $T$-point $Z \in O_X^{\lambda}(T)$, and a map $T \times_{O_X^{\lambda}} (X \odot O_X^{\lambda}) \to B$.
The pushout in the definition of $X \odot O_X^{\lambda}$ gets converted to a pullback:
\[
  \Hecke^{\lambda} = H_{X \times_{O_X} O_X^{1 \leftsum \lambda}}(B) \times_{H_{X \times_{O_X} O_X^{1 \rightsum \lambda}}(B)} H_{X \times O_X^{1 \leftsum \lambda}}(B) \period
\]
Since $O_X^{\bullet}$ is additive, $H_{X \times_{O_X} O_X^{1 \leftsum \lambda}}(B)$ is just the product ${\Bun} \times O_X^{\lambda}$.
This results in the more intuitively appealing formula
\[
  \Map_{O_X^{\lambda}}(T, \Hecke^{\lambda}) = \Map_X(X \times T, B) \times_{\Map_X(X \times_{O_X} O_X^{\ul{1} \rightsum \lambda} \times_{O_X^\lambda} T, B)} \Map_X(X \times T, B) \period
\]
In other words, we have a projection
\[
  \Hecke^{\lambda} \to O_X^\lambda \times {\Bun} \times {\Bun} \comma
\]
whose fiber over a $T$-point $(Z,E_1,E_2)$
-- which is a point $Z \in O_X^{\lambda}(T)$ and a pair $(E_1,E_2)$ of $T$-families of bundles --
is an identification of restrictions to the \enquote{complement} of the graph of $Z$
(formally $X \times_{O_X} O_X^{\ul{1} \rightsum \lambda} \times_{O_X^{\lambda}} T$).
In this sense, the fiber of $\Hecke^{\lambda} \to O_X^{\lambda}$ over $Z$ is the moduli stack of \emph{modifications} of bundles at $Z$. 

The projection $\Hecke^{\lambda} \to O_X^{\lambda}$ is natural in attached maps.
The regularity of $O_X^{\bullet}$ implies that if $\angs{\lambda} \to \angs{\mu}$ is a surjection, then
\[
  \Hecke^{\lambda} \times_{O_X^{\lambda}} O_X^{\mu} = \Hecke^{\mu} \period
\]
Moreover, gluing provides identifications
\[
  \Hecke^{\lambda \rightsum \mu} \equivalence (\Hecke^{\lambda} \times_{\Bun} \Hecke^{\mu}) \times_{O_X^{\lambda \leftsum \mu}} O_X^{\lambda \rightsum \mu} \period
\]
Hence $\Hecke^\bullet$ is a factorization groupoid over $O_X^{\bullet}$ with object-stack $\Bun$.

If the stack $B$ comes with a global basepoint $P \in \Map_X(X,B)$, then
the fiber of $\Hecke^{\bullet} \to \Bun$ over $P$ is the \emph{Beilinson--Drinfeld Grassmannian} $\Grass_\bullet(X, O_X; B, P)$ for $B$ on $O_X^\bullet$ relative to $P$.

If we work in the context of \cref{sub:FFcurve}, then one only gets the correct object by passing to smaller versions of these stacks satisfying meromorphy conditions.

\subsection{Comments \& questions}%
\label{sub:comments-factorization}

Let $\XX$ be a category with finite products, and let $\AA$ be a sheaf of symmetric monoidal categories on $\XX$.
Let $X$ be an object of $\XX$.
As we discovered (\ref{sub:sheavesonRan}), laxifying the symmetric monoidal structure on $\AA(X)$ by using the powers $X^n$ gives a lax symmetric monoidal functor $\FF_s \to \Cat$.
The non-multiplicative global sections of this functor are then precisely sheaves on the Ran space.

The main construction here laxifies the symmetric monoidal structure further to a parallax symmetric monoidal structure by using an isolability structure on $X$.
After dispensing, with the unit, we are left with a lax twofold symmetric monoidal functor $\DD_s \to \Cat$.
Factorization algebras are then global sections of this functor that are multiplicative with respect to the \emph{second} symmetric monoidal structure only.

This now suggests a strategy for building a theory of factorization algebras with \enquote{extended observables} of different dimensions:
\begin{enumerate}
  \item We consider $(1+n)$-fold symmetric monoidal categories, which have tensor products
    \[
      \otimes_0 \to \otimes_1 \to \cdots \to \otimes_n \period
    \]
  \item The free $(1+n)$-fold symmetric monoidal category generated by a commutative monoid for the leftmost tensor product $\otimes_0$ is a certain category $\DD(n)$ of \emph{$n$-cographs},
    which are graphs with a coloring of each edge by one of $n$ colors satisfying conditions analogous to those we saw in $\DD = \DD(1)$.
  \item An \emph{$n$-isolability object} is then a functor $X^{\bullet} \colon \DD(n)^{\op} \to \XX$.
  \item Applying $\AA$, we then obtain an \emph{$n$-parallax symmetric monoidal category} $\AA(X^{\bullet})$, which is a $(1+n)$-fold lax symmetric monoidal functor $\DD(n) \to \Cat$.
    These are a kind of laxification of symmetric monoidal categories.
  \item Now \emph{$n$-dimensional factorization algebras} should be nonunital $n$-fold symmetric monoidal functors $1_{\bullet} \to \AA(X^{\bullet})$, relative only to the tensor products $\otimes_1, \dots, \otimes_n$.
    Of course other variants are now possible as well.
\end{enumerate}

This is, of course, speculative.
We do not yet know how to construct physically meaningful examples of $n$-isolability objects.


\DeclareFieldFormat{labelnumberwidth}{#1}
\printbibliography[keyword=alph, heading=references]
\DeclareFieldFormat{labelnumberwidth}{{#1\adddot\midsentence}}
\printbibliography[heading=none, notkeyword=alph]

\end{document}